\definecolor{refkey}{gray}{.75}
\definecolor{labelkey}{gray}{.75}
\newcommand{\R}{\mathbb R}
\newcommand{\N}{\mathbb N}
\newcommand{\E}{\mathbb E}
\newcommand{\diff}{\mathrm{d}}
\newcommand{\cH}{\mathcal{H}}
\newcommand{\pr}{\mathbb P}
\newcommand{\ident}{{\mathchoice {\rm 1\mskip-4mu l} {\rm 1\mskip-4mu l}
{\rm 1\mskip-4.5mu l} {\rm 1\mskip-5mu l}}}
\newcommand{\gegerm}{{\,\ge_{\textrm{germ}\,}}}
\newcommand{\gepgf}{{\,\ge_{\textrm{pgf}\,}}}
\newcommand{\boldmu}{{\bm{\mu}}}
\newcommand{\boldnu}{{\bm{\nu}}}
\newtheorem{teo}{Theorem}[section]
\newtheorem{lem}[teo]{Lemma}
\newtheorem{cor}[teo]{Corollary}
\newtheorem{rem}[teo]{Remark}
\newtheorem{pro}[teo]{Proposition}
\newtheorem{defn}[teo]{Definition}
\newtheorem{exmp}[teo]{Example}
\newtheorem{assump}[teo]{Assumption}
\title
{Strong survival and extinction for multitype branching processes via a new order for generating functions}
\author[D.~Bertacchi]{Daniela Bertacchi}
\address{D.~Bertacchi, Dipartimento di Matematica e Applicazioni,
Universit\`a di Milano--Bicocca,
via Cozzi 53, 20125 Milano, Italy.}
\email{daniela.bertacchi\@@unimib.it}
\author[F.~Zucca]{Fabio Zucca}
\address{F.~Zucca, Dipartimento di Matematica,
Politecnico di Milano,
Piazza Leonardo da Vinci 32, 20133 Milano, Italy.}
\email{fabio.zucca\@@polimi.it}
\date{}
\begin{document}

\begin{abstract}
We consider general discrete-time multitype branching processes on a countable set $X$. 
According to these processes, a particle at $x\in X$ generates a random number of children and places them at (some of) the sites of $X$, 
not necessarily independently nor with the same law at different starting vertices $x$.
We introduce a new type of stochastic ordering of multitype branching processes, generalizing the germ order 
 introduced by Hutchcroft in \cite{cf:Hut2022},
which relies on the generating function of the process. We prove that given two multitype branching processes with law $\boldmu$ and $\boldnu$ respectively, with $\boldmu\ge\boldnu$, then in every set where there is survival according to $\boldnu$, there is survival also according to $\boldmu$. Moreover, in every set where there is strong survival according to $\boldnu$, there is strong survival also according to $\boldmu$, provided that the supremum of the global extinction probabilities, for the $\boldnu$-process, taken over all starting points $x$, is strictly smaller than 1. 
New conditions for survival and strong survival for inhomogeneous multitype branching processes are provided.
We also extend a result of Moyal \cite{cf:Moyal}
which claims that, under some conditions, the global extinction probability for a multitype branching process is the
only fixed point of its generating function, whose supremum over all starting coordinates may be smaller than 1.
\end{abstract}


\maketitle
\noindent {\bf Keywords}: branching random walk, multitype branching process, generating function, fixed point, extinction probability vector, germ order, pgf order, strong survival, maximal and minimal displacement.

\noindent {\bf AMS subject classification}: 60J80, 60J10.

\section{Introduction}
\label{sec:intro}

The multitype branching process (or briefly $\textit{MBP}$) on an at most countable set $X$
is a process which describes the evolution of a population breeding and dying on $X$, where the elements of $X$ can be seen as \textit{types} or  \textit{positions} of the individuals of the population. Throughout our paper we stick with the second interpretation and we consider $X$ as the space  where the dynamics take place.
Another common name for this process is \textit{branching random walk}, although some authors reserve this denomination for the case when $X$ is endowed with a graph structure.

A general MBP is defined once we fix the reproduction law $\boldmu=\{\mu_x\}_{x\in X}$ (see Section \ref{subsec:process} for details).
All particles at site $x$ breed and place children according to $\mu_x$, which incorporates not only information about how many
the children are, but also about where they are sent to live. In this sense particles do not walk, but there is a random walk of
the population as a whole.

The \textit{branching process} can be seen as  a particular case of the MBP, where $X$ is reduced to a singleton and the only
information needed is the reproduction law $\rho$ defined on $\N$.
A natural way to define a MBP on $X$ is to couple a family of branching processes, given by reproduction laws $\{\rho_x\}_{x\in X}$,
and a random walk with transition matrix $P$ on $X$. Each individual at $x$ has a $\rho_x$-distributed number of offspring, which are 
independently dispersed according to the random walk. We call this kind of process MBP \textit{with independent diffusion}.
We remark that for general MBP, the dispersal of the progeny may not be independent nor based on a random walk (for instance we may
place two children at a given vertex with probability $p$ and one child at each of a couple of other vertices with probability $1-p$).

We are interested in the long-term behaviour of the process in fixed subsets of $X$.
In the long run, for any $A\subseteq X$,
a MBP starting with one individual at $x\in X$ can go extinct in $A$ (no individuals alive in 
$A$ from a certain time on) or survive in $A$ (infinitely many visits to $A$).
If the probability of extinction in $A$ is equal to 1, we say that there is \textit{extinction} in $A$, and we say that
there is \textit{survival} in $A$ otherwise.
There is \textit{global survival} when there is survival in $X$  and we have \textit{strong survival} in $A$ when, 
conditioned on global survival, there is survival in $A$ almost surely.

Clearly, the probability of extinction in $A$ depends on the starting vertex $x$. Then, letting $x$ vary 
in $X$, we get an extinction probability vector the we denote by $\mathbf{q}(A)$. If we allow $A$ to vary among the subsets of $X$, we have
the family of all extinction probability vectors of the MBP.

For the branching process, it is well known that the long-term behaviour and the extinction probability are linked with
the generating function of $\rho$, $G(z):=\sum_n\rho(n)z^n$, $z\in[0,1]$. 
Provided that the process is nontrivial, that is $\rho(1)<1$, this generating function has at most two fixed points: the extinction probability and 1.
In the case of a general MBP it is possible to define a (multi-dimensional) generating function which plays a similar role, but as 
soon as $X$ is not finite, the situation gets far more complex: there might be infinitely many fixed points and infinitely many extinction probability
vectors (see Section \ref{subsec:genfun}); moreover there can be fixed points that are not extinction probability vectors.
It is still true, however, that the vector $\mathbf 1$ is always a fixed point of the generating function and the \textit{global extinction}
probability vector (that is, the probability of extinction in the whole space $X$) is always the smallest fixed point.

The fact that the generating function of the process contains all the information on its behaviour is exploited
in the main result of the present paper.
In \cite{cf:Hut2022} the author focussed on MBPs with independent diffusion and 
reproduction law $\rho$ equal for all sites and 
introduced a new stochastic ordering. This order is named \textit{germ order} and is based on a comparison between the 
one-dimensional generating
functions of the reproduction laws. 
The author was able to compare MBPs which are defined by the same underlying random walk $P$ on $X$ and differ only by the
reproduction law, which is constant along $X$.

We define the germ order for general MBPs which extends the one in \cite{cf:Hut2022}: the proof of this fact can be found in Proposition~\ref{pro:germindepdiff} (see also the discussion preceding the proposition itself).
Then we extend \cite[Theorem 1.3]{cf:Hut2022}, by proving the following result
(a more precise statement is given by Theorem \ref{th:germorder}).
\begin{teo}\label{th:germorderintro}
	Let   $\boldmu$ and $\boldnu$ be the law of two MBPs on a countable space $X$ and let $\boldmu \gegerm \boldnu$.
	\begin{enumerate}
		\item 
		In any set where there is     $\boldnu$-survival, there is $\boldmu$-survival.
		\item If the supremum of the global $\boldnu$-extinction probabilities, over all starting coordinates, is smaller than 1,
		then in any set where there is $\boldnu$-strong survival, there is
		$\boldmu$-strong survival.
	\end{enumerate}
\end{teo}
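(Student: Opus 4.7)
The plan is to work with the multi-dimensional generating functions $G_\boldmu$ and $G_\boldnu$ of the two BRWs, to characterize extinction in an arbitrary subset $A\subseteq X$ as a limit of iterates of these functions starting from suitable vectors near $\mathbf{1}$, and then to translate the local (near $\mathbf{1}$) comparison provided by the germ order into a coordinatewise comparison of the full orbits.

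First, I would establish the dynamical characterization of extinction in $A$. The vector $\mathbf{q}_\boldmu(A)$ should be recovered as the limit of $G_\boldmu^n$ applied to a starting vector that encodes ``no particles alive in $A$ after time $0$'' (for instance, the vector equal to $1$ on $A^c$ and to $0$ on $A$, or a related construction, of the kind developed in Section~\ref{subsec:genfun}). Under this description, $\boldmu$-survival in $A$ is equivalent to the orbit not converging to $\mathbf{1}$, and $\boldmu$-strong survival in $A$ is equivalent to the orbit converging down to the global extinction vector $\mathbf{q}_\boldmu(X)$.

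Second, I would promote the germ-order hypothesis to an iterate-level statement. By the very definition of $\gegerm$, there is a neighborhood of $\mathbf{1}$ on which $G_\boldmu \le G_\boldnu$ coordinatewise; combined with the fact that multi-dimensional generating functions are coordinatewise nondecreasing, a simple induction yields $G_\boldmu^n(z) \le G_\boldnu^n(z)$ whenever every intermediate iterate remains inside this neighborhood. Passing to the limit, for any starting point $z$ chosen inside the germ domain and above the two ``initial'' vectors of step~1, I obtain $\mathbf{q}_\boldmu(A) \le \mathbf{q}_\boldnu(A)$ on those coordinates. For part~(1) this already suffices: $\boldnu$-survival in $A$ means $\mathbf{q}_\boldnu(A)_x < 1$ for some $x$, and the iterate comparison forces $\mathbf{q}_\boldmu(A)_x < 1$ as well.

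For part~(2), I assume $\mathbf{q}_\boldnu(A) = \mathbf{q}_\boldnu(X)$, and I want $\mathbf{q}_\boldmu(A) = \mathbf{q}_\boldmu(X)$. The hypothesis $\sup_x (\mathbf{q}_\boldnu(X))_x < 1$ then places $\mathbf{q}_\boldnu(X)$ uniformly inside the germ neighborhood of $\mathbf{1}$, which is precisely what permits me to initialize the iteration from a vector $z$ dominating $\mathbf{q}_\boldnu(X)$ and still lying inside the germ domain. Because $\mathbf{q}_\boldnu(X)$ is a fixed point of $G_\boldnu$, the $\boldnu$-orbit from such a $z$ stays in the germ domain forever, and hence by the comparison above so does the $\boldmu$-orbit. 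Taking limits I deduce $\mathbf{q}_\boldmu(A) \le \mathbf{q}_\boldnu(X)$, and combining this with the always-valid inequality $\mathbf{q}_\boldmu(X) \le \mathbf{q}_\boldmu(A)$ and with the fact that $\mathbf{q}_\boldmu(X)$ is the minimal fixed point of $G_\boldmu$, I conclude $\mathbf{q}_\boldmu(A) = \mathbf{q}_\boldmu(X)$, i.e., $\boldmu$-strong survival in $A$.

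The main obstacle is the bookkeeping needed to guarantee that all iterates of $G_\boldmu$ actually remain inside the germ neighborhood of $\mathbf{1}$, since the germ order is only a local hypothesis. Without the uniform gap $\sup_x (\mathbf{q}_\boldnu(X))_x < 1$, orbits could drift toward $\mathbf{1}$ along some coordinates and escape the region where the comparison is valid, which is exactly why this condition is imposed in part~(2) but not in part~(1): for part~(1) one only needs to prevent the limit from coinciding with the single ``trivial'' fixed point $\mathbf{1}$, whereas for part~(2) one must compare to the non-trivial fixed point $\mathbf{q}_\boldmu(X)$, and this requires staying in the germ domain uniformly.
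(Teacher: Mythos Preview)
Your plan has a genuine gap at its very first step: the extinction vector $\mathbf{q}_\boldmu(A)$ is \emph{not} obtainable by iterating $G_\boldmu$ from a starting vector such as $\mathbf{1}_{A^c}$. Indeed $G_\boldmu^{(n)}(\mathbf{z}|x)=\E_\boldmu^x[\mathbf{z}^{\eta_n}]$, so $G_\boldmu^{(n)}(\mathbf{1}_{A^c}|x)=\pr_\boldmu^x(\eta_n\equiv 0\text{ on }A)$; this sequence is not monotone in $n$ and its limit (when it exists) is not the probability of \emph{eventual} extinction in $A$. The set $A$ enters the dynamics only through the starting vector, and $G$ alone cannot distinguish ``$\eta_n$ vanishes on $A$'' from ``$\eta_m$ vanishes on $A$ for all $m\ge n$''. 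This is why the paper does not iterate $G$: it introduces auxiliary nondecreasing operators $I_\boldmu,I_\boldnu$ on $[t,1]^X$ (see the proof of Lemma~\ref{lem:germorder}) built from $G$ \emph{and} $A$, whose orbits control $\E^x[t^{L(A)}]$ and $\E^x[t^{\ident(L(A)>0)}]$, and then passes to a space-time BRW (Lemma~\ref{lem:germorder2}) to turn ``$L(A)>0$'' into ``$L(A)=\infty$''.

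Even granting step~1, your inductive comparison would yield $\mathbf{q}_\boldmu(A)\le\mathbf{q}_\boldnu(A)$, and this is \emph{false} under germ order. A one-dimensional counterexample: $G_\boldnu(z)=z^2$ has $q_\boldnu=0$, while $G_\boldmu(z)=0.1+0.9z^3$ satisfies $G_\boldmu\le G_\boldnu$ near $1$ (since $G_\boldmu'(1)=2.7>2=G_\boldnu'(1)$) yet $q_\boldmu>0$. The paper only obtains the weaker affine bound $\mathbf{q}^\boldmu(x,A)\le\mathbf{q}^\boldnu(x,A)(1-\delta)+\delta$ (Theorem~\ref{th:germorder}(1)), which still suffices for part~(1) of Theorem~\ref{th:germorderintro}. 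The reason your induction breaks is precisely the obstacle you name at the end: you need $G_\boldmu^{(k)}(z)\in[\delta,1]^X$ for all $k$, but the comparison $G_\boldmu^{(k)}(z)\le G_\boldnu^{(k)}(z)$ gives only an \emph{upper} bound on the $\boldmu$-orbit, never the lower bound $\ge\delta$ that would keep it in the germ domain. For part~(2) the paper closes the argument not by controlling orbits but by invoking Theorem~\ref{th:moyal2}: once one knows $\sup_x\mathbf{q}^\boldmu(x,A)<1$, that theorem forces $\mathbf{q}^\boldmu(A)=\mathbf{q}^\boldmu(X)$ directly.
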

The assumption in the second part of the statement may appear technical at first glance, but as discussed in Example~\ref{exmp:continuoustime} it cannot be removed. Moreover, it is worth remarking that
under very mild conditions,  among all fixed points, only the global extinction probability vector may satisfy this condition.
Indeed we extend a result of \cite{cf:Moyal}, which states that, under certain conditions, the global extinction probability vector is the only
fixed point which may have coordinates bounded from above by some $\delta<1$. 
We are able to prove, in Theorem~\ref{th:moyal2}, that under no conditions at all,  the global extinction probability vector is the only extinction probability vector 
which can have this property. Moreover, if a mild condition is satisfied, it is also the only fixed point
with supremum different from 1.
This result allows us to extend the original proof
of \cite[Theorem 1.3]{cf:Hut2022} to the case of general MBPs.

The paper is organized as follows. 
Section~\ref{sec:basic} is devoted to the basic definitions and is divided in subsections.
In Section~\ref{subsec:germorder} we introduce the generating function of a family of measures and the general \textit{germ order} along with its main properties.
We recall the usual stochastic order and the pgf order for measures.
The germ order is weaker than the pgf order, which in turn, is weaker than the usual stochastic order.
This definition of germ order extends the one given in
\cite{cf:Hut2022}.
In Section~\ref{subsec:process}
we formally introduce the MBP on a countable space $X$.
In Section~\ref{subsec:survival} we define survival, strong survival and extinction in the whole space $X$ and in subsets $A\subseteq X$.
Section~\ref{subsec:genfun} is devoted to the properties of the generating function $G$ of the MBP, which is already known to be
useful since extinction probabilities are (some of) its fixed points.
In particular, Proposition~\ref{pro:closed} shows that fixed points of $G$ can be found by iterating the function itself on suitable starting vectors.  
As shown in Section~\ref{subsec:martingale}, fixed points can be used to construct special martingales, which have interesting properties.
These properties are exploited in the proof of Theorem~\ref{th:moyal2}, which is
 the main result of Section~\ref{sec:Moyal}. This theorem
 shows that, given a generic (not necessarily irreducible) MBP, if there exists $\delta<1$ such that the
 probability of extinction in $A$, starting from any $x\in X$, is smaller than $\delta$, then $\mathbf{q}(A)=\mathbf{q}(X)$. The same 
can be said for any fixed point $\mathbf z$: if all its coordinates are small than $\delta$ for some $\delta<1$, then
$\mathbf{z}=\mathbf{q}(X)$, provided that
 the MBP satisfies a mild sufficient condition.
We show that without this condition, there are examples where the property does not hold for fixed points 
(see Examples~\ref{exmp:moyal1} and \ref{exmp:moyal2}).
Section~\ref{sec:germ} is devoted to 
the relation between survival (resp.~strong survival) for two MBPs satisfying the germ order.
The main result of the section, Theorem~\ref{th:germorder}, deals with the germ order, while Theorem~\ref{th:pgforder} deals with the pgf order. 
For two MBPs with independent diffusion we find a condition equivalent to germ-order (see Proposition~\ref{pro:germindepdiff}). The results of this section generalize the results in \cite{cf:Hut2022}.  
As explained in details at the end of the section, by using Theorem~\ref{th:germorder} and Proposition~\ref{pro:germindepdiff},  we are able to prove new and powerful conditions for survival and strong survival for inhomogeneous MBPs.
All the proofs, along with technical lemmas can be found in Section~\ref{sec:proofs}.  The final Appendix 
is devoted to the construction of $\mathbb{R}^X$ as a Polish space which is essential for coupling processes stochastically ordered in the classical way.

\section{Basic definitions and properties}
\label{sec:basic}

\subsection{Generating function orders}\label{subsec:germorder}

Given an at most countable set $X$ and a set $Y$
we consider a family of measures $\boldmu=\{\mu_y\}_{y \in Y}$
defined on the (countable) 
measurable space 
$S_X:=\{f:X \to \N\colon |f|<+\infty\}$ equipped with the $\sigma$-algebra $2^{S_X}$
where 
$|f|:=\sum_yf(y)<+\infty$; throughout this paper we denote by $\N$ the set of natural numbers including $0$. 
An interpretation is the following: suppose that an individual marked with a label $y$ has a random number of items to place in a space $X$, then $\mu_y(f)$ represents the probability that there are $f(x)$ items placed at $x$ (for all $x \in X$).

To the family $\{\mu_y\}_{y \in Y}$, we associate the following generating function $G_\boldmu:[0,1]^X \to [0,1]^Y$,
\begin{equation}\label{eq:genfun}
	G_\boldmu({\mathbf{z}}|y):= \sum_{f \in S_X} \mu_y(f) \prod_{x \in X} {\mathbf{z}}(x)^{f(x)}, 
\end{equation}
where $G_\boldmu({\mathbf{z}}|y)$ is the $y$ coordinate of $G_\boldmu({\mathbf{z}})$.
The family $\{\mu_y\}_{y \in Y}$ is uniquely determined by $G_\boldmu$ (see for instance \cite[Section 2.3]{cf:BZ14-SLS} or \cite[Section 2.2]{cf:BZ2017} and Lemma~\ref{lem:uniqueness}
). 
Henceforth, when it is not misleading, we write $G$ instead of $G_\boldmu$.  We define $\phi^\boldmu_y(t):=G_\boldmu(t\mathbf{1}|y)$ for $t \in [0,1]$ and $y \in Y$ (sometimes we write $\phi_y$ instead of $\phi^\boldmu_y$) where $\mathbf{1}(x):=1$ for all $x \in X$ (similarly we define $\mathbf{0} \in [0,1]^X$ as $\mathbf{0}(x):=0$ for all $x \in X$). Note that, if
\begin{equation}\label{eq:total}
	\rho_y(n):= \mu_y(f\colon |f|=n),
\end{equation}
then $\phi_y$ is the one-dimensional generating function of $\rho_y$.
The topological properties of $G_\boldmu$ are described in the following proposition; in particular, we define $\|\mathbf{z}\|_\infty:=\sup_{x \in C} |\mathbf{z}(x)|$ the restriction of the norm of $l^\infty(C)$ to $[0,1]^C$ (for $C\in\{X,Y\}$). The (partially ordered) spaces $[0,1]^X$ and $[0,1]^Y$ can be equipped with two useful topologies: the product (or \textit{pointwise convergence}) topology  and the finer topology arising from the metric $d(\mathbf{z}, \mathbf{v}):=\|\mathbf{z}- \mathbf{v}\|_\infty$ .

\begin{pro}\label{pro:Gtopology}
	Let us consider the generating function $G_\boldmu$ defined by eq.~\eqref{eq:genfun}.
	\begin{enumerate}
		\item $G$ is 
		non-decreasing
		with respect to the usual partial order of $[0,1]^X$ and $[0,1]^Y$.
		\item $G$ is continuous with respect to the \textit{pointwise convergence topology} of $[0,1]^X$ and $[0,1]^Y$.
		\item If the family $\{\rho_y\}_{y \in Y}$ is tight then $G$ is uniformly continuous with respect to the $\|\cdot\|_\infty$-topologies of  $[0,1]^X$ and $[0,1]^Y$.
	\end{enumerate}
\end{pro}

Given a family $(\rho_y)_{y \in Y}$ of measures on $\mathbb{N}$ and a nonnegative stochastic matrix $P=(p(y,x))_{y \in Y, x \in X}$ (where $\sum_{x \in X} p(y,x)=1$ for all $y \in Y$) then
we say that $\boldmu$ is an \emph{multinomial} family of measures if
\begin{equation}\label{eq:particular1}
\mu_y(f)=\rho_y \left (\sum_{x \in X} f(x) \right )\frac{(\sum_{x \in X} f(x))!}{\prod_{x \in X} f(x)!} \prod_{x \in X} p(y,x)^{f(x)},
\quad \forall f \in S_X.
\end{equation}
If we use the above interpretation of the family $\boldmu$ then, in the case of a multinomial family, an individual marked with the label $y$ draws a random number $n$ of items (according to $\rho_y$) and places each one independently in $X$ according to the distribution $p(y, \cdot)$. In the language of MBPs this is called \textit{independent diffusion}, see Section~\ref{subsec:process}.

It is easy to prove that for a multinomial family $\boldmu$
\begin{equation}\label{eq:Gindepdiff}
G(\mathbf{z}|y)=\phi^\boldmu_y (P\mathbf{z}(y)), \quad \forall y \in Y, \, \mathbf{z} \in [0,1]^X,
\end{equation}
where $P\mathbf{z}(y)=\sum_{x \in X} p(y,x)\mathbf{z}(x)$.
In this case,
$\phi^\boldmu_y(t)=G(t\mathbf{1}|y)=\sum_{i \in \mathbb{N}} \rho_y(i) t^i$  is the generating function of $\rho_y$. Indeed, by using the definition of $G$ and equation~\eqref{eq:particular1}
\[
\begin{split}
G(\mathbf{z}|y)&=\sum_{f \in S_X} 
\rho_y \left (\sum_{x \in X} f(x) \right )\frac{(\sum_{x \in X} f(x))!}{\prod_{x \in X} f(x)!} \prod_{x \in X} p(y,x)^{f(x)} 
\prod_{x \in X} \mathbf{z}(x)^{f(x)}\\	
&=\sum_{n \in \mathbb{N}} \rho_y (n) \sum_{\stackrel{f \in S_X}{\sum_{x \in X}f(x)=n} } 
\frac{n!}{\prod_{x \in X} f(x)!} \prod_{x \in X} \big (p(y,x) \mathbf{z}(x) \big )^{f(x)} \\
&=\sum_{n \in \mathbb{N}} \rho_y (n) \Big (\sum_{x \in X} 
p(y,x) \mathbf{z}(x) \Big )^n. 
\end{split}
\]

\begin{defn}\label{def:ordering}
Let $\boldmu:=\{\mu_y\}_{y \in Y}$ and $\boldnu:=\{\nu_y\}_{y \in Y}$ be two families of measures on 
$S_X$.
Let $G_{\boldmu}$ and $G_\boldnu$ be the associated generating functions.
\begin{enumerate}
		 \item $\boldmu \succeq \boldnu$ if and only if 
	$\mu_y \succeq \nu_y$ for all $y \in Y$, that is, if and only if
	given a non-decreasing measurable function $F\colon S_X\to \R$, we have
	$\int F \diff \mu_y \ge \int F \diff \nu_y$ for all $y \in Y$ such that the integrals are well defined.
	\item $\boldmu \gepgf \boldnu$ if and only if
	$G_\boldmu(\mathbf{z}) \le G_\boldnu(\mathbf{z})$ for all $\mathbf{z} \in [0,1]^X$.
	\item $\boldmu \gegerm \boldnu$ if and only if there exists $\delta \in [0,1)$
	$G_\boldmu(\mathbf{z}) \le G_\boldnu(\mathbf{z})$ for all $\mathbf{z} \in [\delta,1]^X$.
\end{enumerate}
If $\# Y=1$, that is, $\boldmu=\{\mu\}$ and $\boldnu=\{\nu\}$,
then we simply write $\mu \gepgf \nu$ and $\mu \gegerm \nu$.
\end{defn}

We observe that $\boldmu \succeq \boldnu \ \Rightarrow \boldmu \gepgf \boldnu \ \Rightarrow \ \boldmu \gegerm \boldnu$, but the reverse implications do not hold. 
Clearly $G_\boldmu(\mathbf{z}) \le G_\boldnu(\mathbf{z})$ if and only if $G_\boldmu(\mathbf{z}|y) \le G_\boldnu(\mathbf{z}|y)$ for all $y \in Y$; thus,
$\boldmu \gegerm \boldnu$ (with a certain $\delta <1$ ) if and only if $\mu_y \gegerm \nu_y$ for all $y \in Y$ (with the same $\delta <1$).

We recall that for real-valued measures (that is, when $Y$ is a singleton),  $\mu \succeq \nu$ is equivalent to the existence of
two random variables $\eta$, $\zeta$ with laws $\mu$ and $\nu$ respectively, such that $\eta\ge\zeta$ a.s. (this construction
is usually referred as an ordered coupling).
This result can be extended to measures on partially ordered, compact metric spaces (\cite[Theorem 2.4]{cf:Liggett}) and 
to measures on partially ordered Polish spaces (see for instance \cite[Theorem 1]{cf:KKO}). 
It is not difficult to show that
$\mathbb{R}^X$, with a suitable finite metric, is a partially ordered Polish space. 
\smallskip

The following result shows that $\gegerm$ is a partial order. 
\begin{pro}\label{pro:germordertrueorder} The binary relation
$\gegerm$ is a partial order.
\end{pro}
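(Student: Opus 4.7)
The plan is to verify the three axioms of a partial order: reflexivity, transitivity, and antisymmetry. Reflexivity is trivial since $G_\boldmu \le G_\boldmu$ holds on $[0,1]^X$ (take $\delta=0$). For transitivity, if $\boldmu \gegerm \boldnu$ with threshold $\delta_1$ and $\boldnu \gegerm \bm{\eta}$ with threshold $\delta_2$, then on $[\delta,1]^X$ with $\delta := \max(\delta_1,\delta_2) < 1$ both inequalities propagate, yielding $\boldmu \gegerm \bm{\eta}$.

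The substantive part is antisymmetry. If $\boldmu \gegerm \boldnu$ and $\boldnu \gegerm \boldmu$, choosing $\delta \in [0,1)$ at least as large as both associated thresholds gives $G_\boldmu(\mathbf{z}|y) = G_\boldnu(\mathbf{z}|y)$ for every $y \in Y$ and every $\mathbf{z} \in [\delta,1]^X$. I then need to conclude $\boldmu = \boldnu$, which by the uniqueness of the family from its generating function (recalled just after \eqref{eq:genfun} and formalized in Lemma~\ref{lem:uniqueness}) reduces to showing $G_\boldmu = G_\boldnu$ on the full domain $[0,1]^X$.

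To extend equality from $[\delta,1]^X$ to $[0,1]^X$, I would fix $y \in Y$ and a finite set $F \subset X$, and restrict to vectors $\mathbf{z}$ of the form $\mathbf{z}(x) = z_x$ for $x \in F$ and $\mathbf{z}(x) = 1$ otherwise. Then
\[
G_\boldmu(\mathbf{z}|y) \;=\; \sum_{(n_x)_{x\in F} \in \N^F} \mu_y^F\bigl((n_x)_{x\in F}\bigr)\prod_{x\in F} z_x^{n_x},
\]
where $\mu_y^F$ is the $F$-marginal of $\mu_y$. This is a multivariate power series with nonnegative coefficients that converges at $z_x \equiv 1$ (because $G_\boldmu(\mathbf{1}|y) \le 1$), so it converges absolutely on the closed polydisc $\{|z_x|\le 1\}$ and defines a holomorphic function on the open complex polydisc. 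Since its $\boldmu$- and $\boldnu$-versions agree on the full-dimensional real box $(\delta,1)^F$, the identity principle for analytic functions forces equality of all Taylor coefficients, hence $\mu_y^F = \nu_y^F$. Letting $F$ sweep through all finite subsets of $X$ identifies $\mu_y$ with $\nu_y$, and doing this for every $y \in Y$ yields $\boldmu = \boldnu$. The only delicate step is precisely this last extension from the subbox $[\delta,1]^X$ to all of $[0,1]^X$; everything else is bookkeeping around the definition of $\gegerm$.
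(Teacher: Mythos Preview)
Your proposal is correct and follows essentially the same route as the paper: after disposing of reflexivity and transitivity, you reduce antisymmetry to equality of finite marginals by freezing all but finitely many coordinates at $1$, invoke the identity principle for the resulting multivariate power series on the polydisc (this is exactly the content of Lemma~\ref{lem:uniqueness}), and then pass to the limit over finite subsets to recover $\mu_y=\nu_y$. One small clarification: in the paper Lemma~\ref{lem:uniqueness} is the analytic-continuation statement itself (vanishing on $[\delta,1]^n$ implies vanishing on $D^n$), not the uniqueness of $\boldmu$ from $G_\boldmu$, so your appeal to it belongs at the identity-principle step rather than at the ``reduces to showing $G_\boldmu=G_\boldnu$'' step.
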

 We note that if $\# X=\# Y=1$, and $G_\boldmu$ and $G_\boldnu$ admit an holomorphic extension in a neighborhood of $1$, then
$\gegerm$ defines a total order. Indeed in this case if there is no $\delta\in[0,1)$ such that 
$G_\boldmu(z)<G_\boldnu(z)$ for all $z\in (\delta,1)$ or  $G_\boldmu(z)>G_\boldnu(z)$, then the two
 functions coincide 
 by \cite[Theorem 10.18]{cf:Rudin}.
The existence of an holomorphic extension of the generating functions is no longer sufficient, as soon as $X$ or $Y$ has at least cardinality
2.
Indeed if
 $\#Y \ge 2$ and the total offspring distributions $\{\rho_y\}_{y \in Y}$ are not constant with respect to $y$, 
 then $\gegerm$ is clearly not a total order. If $\#Y=1$ and $\#X=2$ 
  see the example after Proposition~\ref{pro:germindepdiff}.

Note that the MBPs discussed in  \cite{cf:Hut2022} are described by a multinomial family of measures, where $\rho_y \equiv \rho$ does not depend on $y$. The definition of germ order in  \cite{cf:Hut2022} concerns the generating function of $\rho$. One may extend this definition to general multinomial families by comparing, site by site, the corresponding generating function of $\rho_y$. This is not our definition (which is based on a multidimensional generating function), however in the case of multinomial measures the two definitions are equivalent as the following proposition shows. 


\begin{pro}\label{pro:germindepdiff}
Suppose that $\boldmu$ and $\boldnu$ are two families of measures 
and let us define $\phi_y(t):=G_\boldmu(t \mathbf{1}|y)$ for all $y \in Y$ and $t \in [0,1]$.
Consider the following for any fixed $\delta<1$:
\begin{enumerate}
	\item $G_\boldmu(\mathbf{z}) \le G_\boldnu(\mathbf{z})$ for all $\mathbf{z} \in [\delta,1]^X$;
	\item $\phi_y^\boldmu(t) \le \phi_y^\boldnu(t)$ for all $t \in [\delta,1]$ and all $y \in Y$.
\end{enumerate}
Then $(1) \Rightarrow (2)$. Moreover if $\boldmu$ and $\boldnu$ are multinomial families with the same matrix $P$ (see equations~\eqref{eq:particular1} and \eqref{eq:Gindepdiff}), 
then $(2) \Rightarrow (1)$.	
\end{pro}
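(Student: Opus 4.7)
The statement breaks cleanly into two directions, of which the first is immediate and the second requires only the representation formula~\eqref{eq:Gindepdiff} for multinomial families.

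For $(1) \Rightarrow (2)$, the plan is simply to specialize $\mathbf{z}$ to a constant vector. If $t \in [\delta,1]$, then $t\mathbf{1} \in [\delta,1]^X$, and by definition $\phi_y^\boldmu(t) = G_\boldmu(t\mathbf{1}|y)$, and analogously for $\boldnu$. Applying (1) at $\mathbf{z} = t\mathbf{1}$ coordinate by coordinate gives $\phi_y^\boldmu(t) \le \phi_y^\boldnu(t)$ for every $y \in Y$. This direction does not use the multinomial hypothesis at all.

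For $(2) \Rightarrow (1)$ under the multinomial assumption, the key ingredient is the closed form $G_\boldmu(\mathbf{z}|y) = \phi_y^\boldmu(P\mathbf{z}(y))$ established in~\eqref{eq:Gindepdiff}, together with the same identity for $\boldnu$; crucially the matrix $P$ is the same for both families by hypothesis. Since $P$ is stochastic, for any $\mathbf{z} \in [\delta,1]^X$ the quantity
\[
P\mathbf{z}(y) = \sum_{x \in X} p(y,x)\, \mathbf{z}(x)
\]
is a convex combination of numbers in $[\delta,1]$ and hence lies in $[\delta,1]$ itself. Applying assumption (2) at the scalar point $t := P\mathbf{z}(y) \in [\delta,1]$ then yields $\phi_y^\boldmu(P\mathbf{z}(y)) \le \phi_y^\boldnu(P\mathbf{z}(y))$, i.e.\ $G_\boldmu(\mathbf{z}|y) \le G_\boldnu(\mathbf{z}|y)$, which holds for every $y \in Y$.

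I do not foresee a real obstacle: both directions amount to unpacking the definitions once one has the formula~\eqref{eq:Gindepdiff} in hand. It is worth underlining in the write-up why the converse needs the multinomial hypothesis: without it, $G_\boldmu(\mathbf{z}|y)$ is genuinely a function of the whole vector $\mathbf{z}$ and not just of the scalar $P\mathbf{z}(y)$, so the values of $G_\boldmu$ on the diagonal $\{t\mathbf{1}:t\in[\delta,1]\}$ do not in general determine the values on all of $[\delta,1]^X$. The multinomial structure is exactly the device that reduces the multivariate comparison to the univariate one, and the stochasticity of $P$ is what ensures that the reduction stays inside the cube $[\delta,1]$.
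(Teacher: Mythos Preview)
Your proof is correct and follows essentially the same approach as the paper: specialize to the diagonal $t\mathbf{1}$ for $(1)\Rightarrow(2)$, and for $(2)\Rightarrow(1)$ use the multinomial representation $G_\boldmu(\mathbf{z}|y)=\phi_y^\boldmu(P\mathbf{z}(y))$ together with the fact that stochasticity of $P$ keeps $P\mathbf{z}(y)$ in $[\delta,1]$. The paper phrases the latter via the monotonicity $\delta\mathbf{1}=P(\delta\mathbf{1})\le P\mathbf{z}\le P\mathbf{1}=\mathbf{1}$ rather than your convex-combination wording, but the content is identical.
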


We note that, if the families are not multinomial, then in the previous proposition, (2) does not imply (1), even when $X$ and $Y$ are finite. Take for instance $X=\{1,2\}$, $Y=\{a\}$ and
\[
G_\boldmu(z_1,z_2|a):=
\frac56 z_1z_2 +\frac16
\qquad
G_\boldnu(z_1,z_2|a):=
\frac45 \Big (\frac{5z_1+z_2}6 \Big )^2 +\frac15\\
\]
Clearly $(2)$ holds for $\delta=0$ indeed, for all $t \in [0,1)$ we have
\[
G_\boldmu(t,t|a)=
\frac56 t^2 +\frac16
<
\frac45 t^2 +\frac15
=
G_\boldnu(t,t|a)
\]
nevertheless
\[
G_\boldmu(t,1|a)=\frac{5t}6 +\frac16
> \frac{(5t+1)^2}{45}+\frac15=
G_\boldnu(t,1|a)
\]
for all $t \in (1/10\, ,\, 1)$; thus $G_\boldmu$ and $G_\boldnu$ are incomparable. 

\subsection{The MBP}\label{subsec:process}

Henceforth, if not otherwise stated, we assume that $Y=X$, where $X$ is a countable space (the label coincides with the position).
In this case, the family $\boldmu=\{\mu_x\}_{x \in X}$ of probability measures
on the (countable) measurable space $(S_X,2^{S_X})$ induces a discrete-time MBP on $X$.
This is a process $\{\eta_n\}_{n \in \N}$, 
where $\eta_n(x)$ is the number of particles alive
at $x \in X$ at time $n$. 
The dynamics is described as follows:
a particle of generation $n$, at site $x\in X$, lives one unit of time;
after that, a function $f \in S_X$ is chosen at random according to the law $\mu_x$.
This function describes the number of children and their positions, that is,
the original particle is replaced by $f(y)$ particles at
$y$, for all $y \in X$. The choice of $f$ is independent for all breeding particles.

An explicit construction is the following: given a family $\{ f_{i,n,x}\}_{i,n \in \N, x \in X}$
of independent $S_X$-valued random variable such that, for every $x \in X$, $\{f_{i,n,x}\}_{i,n \in \N}$ have the common law $\mu_x$, then
the discrete-time MBP $\{\eta_n\}_{n \in \N}$ is defined
iteratively as follows
\begin{equation}\label{eq:evolBRW}
\eta_{n+1}(x)=\sum_{y \in X} \sum_{i=1}^{\eta_n(y)} f_{i,n,y}(x)=
\sum_{y \in X} \sum_{j=0}^\infty \ident_{\{\eta_n(y)=j\}} \sum_{i=1}^{j} f_{i,n,y}(x)
\end{equation}
starting from an initial condition $\eta_0$. 
The actual canonical construction can be carried on, by using Kolmogorov's Theorem, in such a way that the probability space and the process $\{\eta_n\}_{n \in \mathbb{N}}$ are fixed, while the probability measure depends on the starting configuration and the family $\boldmu$. When the initial configuration is $\eta$, then the corresponding probability measure is denoted by $\pr^\eta_\boldmu$  and the expectation by $\E^\eta_\boldmu$.
In the particular case when the initial state is
one particle at $x$, namely $\eta=\delta_x$ a.s.,  we write  $\pr^x_\boldmu$ and $\E^x_\boldmu$. 
When $\boldmu$ is fixed, we avoid the subscript $\boldmu$ in the above notations. Similarly, when a result holds for every initial condition $\eta$ (or when the initial condition is fixed) we avoid the superscripts $\eta$ and $x$. 
We denote the MBP by $(X,\boldmu)$; if needed, the
initial value will be indicated each time.
Clearly, $(X,\boldmu)$ is a Markov chain with absorbing state $\mathbf 0$,
the configuration with no particles at all sites.
We denote by $\{\mathcal{F}_n\}_{n \in \N}$ the filtration associated to the process, namely, $\mathcal{F}_n:=\sigma \Big (f_{i,j,x} \colon i,j \in \N, j < n, \ x \in X \Big )$. Note that $\mathcal F_0$ is the trivial $\sigma$-algebra. By using~\eqref{eq:evolBRW} it is easy to see that the MBP is \textit{adapted} to $\{\mathcal{F}_n\}_{n \in \mathbb{N}}$, that is, $\eta_n$ is $\mathcal{F}_n$-measurable for every $n \in \mathbb{N}$.

The total number of children associated to $f$ is represented by the
function $\cH:S_X \rightarrow \N$ defined by $\cH(f):=\sum_{y \in X} f(y)$;
the associated law $\rho_x(\cdot):=\mu_x(\cH^{-1}(\cdot))$ is the law of the random number of children
of a particle living at $x$.
We denote by
 $m_{xy}:=\sum_{f\in S_X} f(y)\mu_x(f)$ 
 the expected number of children that a particle living
 at $x$ sends to $y$. It is easy to show that $\sum_{y \in X} m_{xy}=\bar \rho_x$
 where $\bar \rho_x$ is the expected value of the law $\rho_x$.
%


In particular, if $\rho_x$ does not depend on $x \in X$, we say that the MBP can be
\textit{projected on a branching process} (see \cite{cf:BZ4} for the definition and details, see also Remark~\ref{rem:lowestfixedpoint2}).
This is a particular case of the following definition where $V$ is a singleton.
\begin{defn}
	\label{def:locallyisomorphic} 
	A MBP $(X, \mu)$ is projected onto a MBP $(V,\nu)$ if there exists a
	surjective map $g:X\to V$ such that
	$
	\nu_{g(x)}(\cdot)=
	\mu_x\left(\pi_g^{-1}(\cdot)\right)
	$, 
	where $\pi_g:S_X \rightarrow S_V$ is defined as $\pi_g(f)(v)=\sum_{z\in g^{-1}(v)}f(z)$ for all $f\in S_X$, $v \in V$.
\end{defn}
It is possible to show that $(X,\mu)$ is projected onto $(V, \nu)$ if and only if, for all ${\mathbf{z}} \in [0,1]^V$ and $x \in X$,
$	G_X({\mathbf{z}} \circ g|x)=G_V({\mathbf{z}}|g(x))$.
The meaning of this definition is that, given $\{\eta_n\}$ a realization of $(X,\mu)$, then $\xi_n(v):=\sum_{z \in g^{-1}(v)} \eta_n(z)$ is a realization of $(V,\nu)$.

This is particularly relevant when $V$ is a finite set and
it is called $\mathcal{F}$-MBP
 (see \cite[Section 2.3]{cf:BZ2017},
 Remark~\ref{rem:lowestfixedpoint2} or \cite{cf:BZ,cf:Z1}
for the details on the properties of this projection map).
Examples are the so called \textit{quasi-transitive MBPs} 
(see \cite[Section 2.4, p.~408]{cf:BZ14-SLS} for the formal definition), where the action of the group of automorphisms of 
the MBP (namely, bijective maps preserving the reproduction laws) has a finite number $j$ of orbits:
the finite set onto which we project has cardinality $j$.
When there is just one orbit, then the MBP is called \textit{transitive} (which is thus a particular case of MBP projected on a
branching process). 
We note that in general, an $\mathcal{F}$-MBP does not need to be transitive nor quasi-transitive.

%

It is important to note that, for a generic MBP, 
the locations of the offsprings are not (necessarily) chosen independently, but they are assigned by the 
function $f\in S_X$.
We denote by  $P$ the \textit{diffusion matrix} with entries $p(x,y)=m_{xy}/\bar \rho_x$.
When the children are dispersed independently, they are placed according to $P$
and the process is called 
\textit{MBP with independent diffusion}:
in this case $\boldmu$ is a multinomial family (see equation~\eqref{eq:particular1}).


To  a generic discrete-time MBP we associate a directed graph $(X,E_\mu)$ where $(x,y) \in E_\mu$  
if and only if $m_{xy}>0$.
We say that there is a path from $x$ to $y$ of length $n$, and we write $x \stackrel{n}{\to} y$, if it is
possible to find a finite sequence $\{x_i\}_{i=0}^n$ (where $n \in \N$)
such that $x_0=x$, $x_n=y$ and $(x_i,x_{i+1}) \in E_\mu$
for all $i=0, \ldots, n-1$. Clearly $x \stackrel{0}{\to}x$ for all $x \in X$; if there exists $n \in \N$
such that $x \stackrel{n}{\to} y$, then we write $x \to y$.
 Whenever $x \to y$ and $y \to x$ we write $x \rightleftharpoons y$.
If the graph $(X,E_\mu)$ is \textit{connected}, 
then we say that the MBP 
is \textit{irreducible}. 
 In order to avoid trivial situations where particles have exactly one offspring almost surely, we assume
 henceforth the following.
 \begin{assump}\label{assump:1}
 For all $x \in X$ there is a vertex $y \rightleftharpoons x$ such that
 $\mu_y(f\colon  \sum_{w\colon w \rightleftharpoons y} f(w)=1)<1$.
\end{assump}

\subsection{Survival and extinction}\label{subsec:survival}

\begin{defn}\label{def:extinction-event} $\ $
We call \textsl{survival in $A \subseteq X$} the event 
\[
\mathcal S(A):=\Big \{\limsup_{n \to +\infty} \sum_{y \in A} \eta_n(y)>0\Big \},
\]
and we denote by $\mathcal E(A)=\mathcal S(A)^\complement$ the event that we call \textsl{extinction in $A$}. We define the extinction probability vector $\mathbf{q}(A)$ as $\mathbf{q}(x,A):=\pr^x(\mathcal{E}(A))$ for $x \in X$.
\end{defn}

It is important to note that, in the canonical construction, the events $\{\mathcal{E}(A), \mathcal{S}(A)\}_{A \subseteq X}$ and the corresponding random variables
$\{\ident_\mathcal{E}(A), \ident_\mathcal{S}(A)\}_{A \subseteq A}$ are fixed and do not depend on $\boldmu$ and the initial configuration $\eta$. The dependence on $\boldmu$ and $\eta$ is in the probability measure $\pr_\boldmu^x$.

\begin{defn}\label{def:survival} $\ $
\begin{enumerate}
 \item 
The process \textsl{survives in $A \subseteq X$}, starting from $x \in X$, if
\[
{\mathbf{q}}(x,A)
<1;
\]
otherwise the process \textsl{goes extinct in }$A$ (or dies out in $A$). 
\item
The process \textsl{survives globally}, starting from $x$, if
it survives in $X$.
 \item
 There is \textsl{strong survival in $A \subseteq X$}, starting from $x \in X$,
 if
 $ 
 {\mathbf{q}}(x,A)=\mathbf{q}(x,X) <1.
 $ 
\end{enumerate}
\end{defn}
\noindent
In the rest of the paper we use the notation ${\mathbf{q}}(x,y)$ instead of ${\mathbf{q}}(x, \{y\})$ for all $x,y \in X$.
It is worth noting that, in the irreducible case, for every $A \subseteq X$, the inequality ${\mathbf{q}}(x,A)<{1}$ holds for some $x \in X$ if and only if it
holds for every $x \in X$ (although it may be $\mathbf{q}(x,A) \neq \mathbf{q}(y,A)$ for some $x \neq y$).
For details and results on survival and extinction see for instance  \cite{cf:BZ4, cf:Z1}.

Note that in \cite{cf:Hut2022} the definition of transient set corresponds to our definition of a set where there is extinction starting from every site $x\in X$; while the definition of recurrent set is equivalent to our definition of a set where there is strong survival
starting from every site $x\in X$. Strong local survival has been studied by many authors in the last 15 years, see for instance \cite{cf:Gantert09, cf:BZ14-SLS}. There are examples of MBPs where there is non-strong survival on some finite sets (see \cite[Example 4.2]{cf:BZ14-SLS} or \cite[Corollaries 4.3 and 4.4]{cf:BZ2017}).

\subsection{Generating function of a MBP}\label{subsec:genfun}

The generating function of a MBP on $X$ is $G_\boldmu:[0,1]^X \to [0,1]^X$ defined by 
equation~\eqref{eq:genfun} (with $Y=X$).
 It is easy to show that for all $\mathbf{z} \le \mathbf{v}$, $t \mapsto G(\mathbf{z}+t(\mathbf{v}-\mathbf{z}))$ is a convex function and, in some cases, it is a strictly convex function (see \cite[Lemma 5.1]{cf:BZ14-SLS}); nevertheless, in general, 
 the function $G$ 
 is convex (see \cite[Section 3.1]{cf:BZ2017}). The generating function of the total number of children satisfies $\phi_x(t):=\sum_{n \in \mathbb{N}} \rho_x(n) t^n=G(t\mathbf{1}|x)$ for all $x \in X$ and $t \in [0,1]$.

As in the case of a branching process, extinction probabilities are fixed points
of $G$. The smallest fixed point is ${\mathbf{q}(X)}$:
more generally, given a solution of $G(\mathbf{z}) \le \mathbf{z}$, then $\mathbf z \ge {\mathbf{q}(X)}$.
Consider now the closed sets $F_G:=\{\mathbf{z} \in [0,1]^X \colon G(\mathbf{z})=\mathbf{z}\}$, 
$U_G:=\{\mathbf{z} \in [0,1]^X \colon G(\mathbf{z}) \le \mathbf{z}\}$ and $L_G:=\{\mathbf{z} \in [0,1]^X \colon G(\mathbf{z})\ge \mathbf{z}\}$;
clearly $F_G = U_G \cap L_G$. Moreover, by the monotonicity property, $G(U_G) \subseteq U_G$ and $G(L_G) \subseteq L_G$.
The iteration of $G$ produces sequences converging to fixed points.

\begin{pro}\label{pro:closed}
	Fix $\mathbf{z}_0 \in [0,1]^X$ and define, iteratively,
$\mathbf{z}_{n+1}:=G(\mathbf{z}_n)$ for all $n \in \N$. Suppose that  
 $\mathbf{z}_n \to \mathbf{z}$ as $n \to +\infty$ for some $\mathbf{z} \in [0,1]^X$. Then 
 $\mathbf{z} \in F_G$.  
 Moreover, fix $\mathbf{w} \in [0,1]^X$. 
\begin{enumerate}
 \item If $\mathbf{w} \in U_G$ 
 then $\mathbf{w} \ge \mathbf{z}_0$ implies $\mathbf{w} \ge \mathbf{z}$ (the converse holds for 
 $\mathbf{z}_0 \in L_G$).
 \item If $\mathbf{w} \in L_G$ 
 then $\mathbf{w} \le \mathbf{z}_0$ implies $\mathbf{w} \le \mathbf{z}$ (the converse holds for 
 $\mathbf{z}_0 \in U_G$).
\end{enumerate} 
 \end{pro}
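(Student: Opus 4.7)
The plan is to exploit two facts about $G$ mentioned in the excerpt: continuity with respect to the pointwise convergence topology on $[0,1]^X$, and monotonicity with respect to the coordinatewise partial order. The continuity handles the first claim (the limit being a fixed point), while the monotonicity drives a straightforward inductive comparison of $\mathbf{w}$ with the iterates $\{\mathbf{z}_n\}$.

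For the fixed point assertion, I would simply pass to the limit in the recurrence. Since $\mathbf{z}_n \to \mathbf{z}$ pointwise and $G$ is continuous in this topology, $\mathbf{z} = \lim_{n\to\infty} \mathbf{z}_{n+1} = \lim_{n\to\infty} G(\mathbf{z}_n) = G(\mathbf{z})$, so $\mathbf{z} \in F_G$.

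For part (1), assume $\mathbf{w} \in U_G$ and $\mathbf{w} \ge \mathbf{z}_0$. I would prove $\mathbf{w} \ge \mathbf{z}_n$ by induction on $n$: the base case is the hypothesis, and if $\mathbf{w} \ge \mathbf{z}_n$, then by monotonicity $G(\mathbf{w}) \ge G(\mathbf{z}_n) = \mathbf{z}_{n+1}$, and since $\mathbf{w} \in U_G$ we have $\mathbf{w} \ge G(\mathbf{w}) \ge \mathbf{z}_{n+1}$. Passing to the pointwise limit (a non-strict inequality is preserved), $\mathbf{w} \ge \mathbf{z}$. For the converse under the extra hypothesis $\mathbf{z}_0 \in L_G$, the same kind of induction, applied now to the sequence itself, shows that $\mathbf{z}_0 \le \mathbf{z}_1 \le \mathbf{z}_2 \le \cdots$: indeed $\mathbf{z}_0 \le G(\mathbf{z}_0) = \mathbf{z}_1$, and $\mathbf{z}_n \le \mathbf{z}_{n+1}$ yields $\mathbf{z}_{n+1} = G(\mathbf{z}_n) \le G(\mathbf{z}_{n+1}) = \mathbf{z}_{n+2}$ by monotonicity. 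Passing to the limit gives $\mathbf{z}_0 \le \mathbf{z}$, so $\mathbf{w} \ge \mathbf{z}$ forces $\mathbf{w} \ge \mathbf{z}_0$.

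Part (2) follows by the dual argument, reversing every inequality: the roles of $U_G$ and $L_G$ are swapped, and the iterates form a decreasing sequence when $\mathbf{z}_0 \in U_G$. No step here looks like a genuine obstacle; the only thing to keep in mind is that pointwise limits preserve $\le$ but not strict inequality, which is exactly what the statement demands.
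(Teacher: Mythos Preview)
Your proposal is correct and is precisely the straightforward argument the paper has in mind: the paper does not spell out a proof but simply states ``The proof is straightforward (see for instance \cite{cf:BZ2}),'' and your use of continuity for the fixed-point claim together with the monotonicity-based induction (and the monotonicity of the iterates when $\mathbf{z}_0\in L_G$ or $U_G$ for the converses) is exactly that routine verification.
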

 The proof is straightforward (see for instance \cite{cf:BZ2}). 
 The sequence $\{\mathbf{z}_n\}_{n \in \N}$ defined in the previous proposition converges 
 if $\mathbf{z}_0 \in L_G$ (resp.~$\mathbf{z}_0 \in U_G$): in that case $\mathbf{z}_n \uparrow \mathbf{z}$ (resp.~$\mathbf{z}_n \downarrow \mathbf{z}$) 
 for some $\mathbf{z} \in F_G$.

	We note that $\mathbf{q}(X)$ is not only the smallest fixed point of $G$, but also of any of its iterates $G^{(n)}$, 
	where $G^{(1)}:=G$ and $G^{(n+1)}:=G \circ G^{(n)}$ for every $n \ge 1$. Indeed
	it is known (see for instance 
	\cite{cf:Z1}) that $\mathbf{q}(X)=\lim_{i \to +\infty} G^{(i)}(\mathbf{0}) = \lim_{i \to +\infty} G^{(i\cdot n)}(\mathbf{0})$ for every $n \ge 1$. By Proposition~\ref{pro:closed}, since $\mathbf{0} \in L_G$ is the smallest point of $[0,1]^X$, the above sequence converges to the smallest fixed point of $G^{(n)}$ for all $n \ge 1$.

 Let us briefly address the question of the cardinality of the set of fixed points $F_G$ and its subset $\mathrm{ext}(G):=\{\mathbf{z} \in [0,1]^X\colon \mathbf{z}=\mathbf{q}(A),\, A \subseteq X\}$, that is, the set of extinction probability vectors; the question is relevant in the case of irreducible processes, otherwise it is very easy to find examples where these sets are finite or infinite. 
 It is clear that the cardinality of both sets is at most $\mathfrak{c}:=2^{\aleph_0}$ (where $\aleph_0$ is the cardinality of $\mathbb{N}$). Let us denote the cardinality of a set by $\#$.
 An example can be found in \cite{cf:BZ2017} where $\# F_G=\mathfrak{c}$ while $\# \mathrm{ext}(G)=2$; whence there are fixed points which are not extinction probability vectors. In \cite[Example 4.2]{cf:BZ14-SLS} there is an irreducible MBP where 
 $\# \mathrm{ext}(G) \ge 3$, in \cite{cf:BraHautHessemberg2} there is an example where $\# \mathrm{ext}(G) \ge 4$ and in \cite{cf:BZ2020} there is an example where $\# \mathrm{ext}(G)= \mathfrak{c}$. The question on the cardinality of $\mathrm{ext}(G)$ was completely solved in \cite{cf:BBHZ} where it has been shown that, for every choice of $N \in \mathbb{N} \cup \{\aleph_0, \mathfrak{c}\}$ there exists an irreducible MBP where the cardinality of $\mathrm{ext}(G)$ is $N$.

We recall that if the MBP has independent diffusion (that is, $\boldmu$ satisfies equation~\eqref{eq:particular1}), then $G_\boldmu$ satisfies equation~\eqref{eq:Gindepdiff} (again, $Y=X$). More precisely
\begin{equation}
 G(\mathbf{z}|x)=\phi_x (P\mathbf{z}(x)), \quad \forall x \in X, \, \mathbf{z} \in [0,1]^X,
\end{equation}
where $\phi_x(t):=\sum_{i \in \mathbb{N}} \rho_x(i) t^i$ is the generating function of the total number of children of a particle at $x$. 

\subsection{Useful super/submartingales}\label{subsec:martingale}
Given $\mathbf z\in[0,1]^X$ and $\mathbf w\in[0,+\infty)^X$, we define $\mathbf z^{\mathbf w}\in[0,1]$ as
\[
\mathbf z^{\mathbf w}:=\prod_{x\in X} \mathbf z(x)^{\mathbf w(x)}.
\]
Note that this infinite product always converges, being the limit of a nonincreasing sequence (for any choice of ordering of the elements in $X$). 

The first result gives an explicit expression of the conditional expectation of the above product in terms of the generating function of the process. 

\begin{lem}\label{lem:martingale}
For every $\mathbf z\in[0,1]^X$,  $m\ge0$, $k\ge1$ and for every initial condition $\eta$, we have
\[
\mathbb E^\eta\left[\mathbf z^{\eta_{m+k}}|\mathcal F_m\right] = (G^{(k)}(\mathbf z))^{\eta_m}, \ \pr^\eta\textrm{-a.s.}
\]
\end{lem}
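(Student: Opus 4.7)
I would prove the lemma by induction on $k$, with the case $k=1$ carrying the real content (conditional independence of the reproduction variables) and the inductive step being a clean application of the tower property.

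For the base case $k=1$, I would start from the explicit evolution rule~\eqref{eq:evolBRW} and rewrite
\[
\mathbf z^{\eta_{m+1}} = \prod_{x\in X}\mathbf z(x)^{\eta_{m+1}(x)} = \prod_{x\in X}\prod_{y \in X}\prod_{i=1}^{\eta_m(y)}\mathbf z(x)^{f_{i,m,y}(x)}.
\]
All factors lie in $[0,1]$, so Tonelli for products of numbers in $[0,1]$ lets me reorder and group them as
\[
\mathbf z^{\eta_{m+1}} = \prod_{y\in X}\prod_{i=1}^{\eta_m(y)}\xi_{i,y},\qquad \xi_{i,y}:=\prod_{x \in X}\mathbf z(x)^{f_{i,m,y}(x)}.
\]
Since $f_{i,m,y}\in S_X$ has finite support, each $\xi_{i,y}$ is actually a finite product, takes values in $[0,1]$, and by the definition~\eqref{eq:genfun} of $G$ has unconditional expectation $G(\mathbf z|y)$. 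Moreover the family $\{f_{i,m,y}\}_{i,y}$ is independent of $\mathcal F_m$, while $\eta_m$ is $\mathcal F_m$-measurable. Conditioning on $\mathcal F_m$ and using the independence of the $\xi_{i,y}$ across $(i,y)$, I can pull the conditional expectation inside using bounded convergence (everything is dominated by $1$) to obtain
\[
\E^\eta\bigl[\mathbf z^{\eta_{m+1}}\mid\mathcal F_m\bigr] = \prod_{y\in X}\prod_{i=1}^{\eta_m(y)}G(\mathbf z|y) = \prod_{y\in X}G(\mathbf z|y)^{\eta_m(y)} = G(\mathbf z)^{\eta_m}.
\]

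For the inductive step, assuming the identity for $k$, I would apply the tower property together with the fact that the lemma holds for every initial condition. Specifically,
\[
\E^\eta\bigl[\mathbf z^{\eta_{m+k+1}}\mid\mathcal F_m\bigr] = \E^\eta\Bigl[\E^\eta\bigl[\mathbf z^{\eta_{(m+1)+k}}\mid\mathcal F_{m+1}\bigr]\,\Big|\,\mathcal F_m\Bigr] = \E^\eta\bigl[(G^{(k)}(\mathbf z))^{\eta_{m+1}}\mid\mathcal F_m\bigr],
\]
where I applied the inductive hypothesis with $m$ replaced by $m+1$. Now applying the $k=1$ case with $\mathbf z$ replaced by $G^{(k)}(\mathbf z)\in[0,1]^X$ yields
\[
\E^\eta\bigl[(G^{(k)}(\mathbf z))^{\eta_{m+1}}\mid\mathcal F_m\bigr] = G(G^{(k)}(\mathbf z))^{\eta_m} = (G^{(k+1)}(\mathbf z))^{\eta_m},
\]
which closes the induction.

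The only genuine obstacle is the measure-theoretic bookkeeping in the base case: $X$ and $\eta$ may be infinite, so $\mathbf z^{\eta_{m+1}}$ is an infinite product and the number of reproduction variables $f_{i,m,y}$ with $i\le\eta_m(y)$ can be infinite. This is handled by noting that every partial product over a finite subset $F\subset X$ of sites $y$ (and the corresponding finitely many indices $i$) lies in $[0,1]$ and converges monotonically to the full product as $F\uparrow X$; dominated convergence for conditional expectation then lets me interchange $\E^\eta[\,\cdot\mid\mathcal F_m]$ with this limit. Everything else is the standard branching-property computation.
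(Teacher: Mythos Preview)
Your proof is correct and follows essentially the same route as the paper: induction on $k$, with the base case handled by expanding $\mathbf z^{\eta_{m+1}}$ via the evolution rule, using $\mathcal F_m$-measurability of $\eta_m$ together with independence of the $f_{i,m,y}$ from $\mathcal F_m$, and the inductive step by the tower property. The only cosmetic differences are that the paper decomposes the induction by conditioning on $\mathcal F_{m+k-1}$ (applying the $k=1$ case first) rather than on $\mathcal F_{m+1}$, and it handles measurability by summing over $\ident(\eta_m=h)$ with $h\in S_X$ rather than invoking dominated convergence for infinite products.
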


The previous lemma and Doob's Martingale Convergence Theorem imply the following.

\begin{pro}\label{pro:martingale}
For every give initial state $\eta$, if
$\mathbf{z} \in L_G$
(resp.~$\mathbf{z} \in U_G$),
then $\E^\eta[\mathbf{z}^{\eta_{n+1}}|\mathcal{F}_n] \ge \mathbf{z}^{\eta_n}$
(resp.~$\E^\eta[\mathbf{z}^{\eta_{n+1}}|\mathcal{F}_n] \le \mathbf{z}^{\eta_n}$) for all $n \ge 0$. In particular
if $\mathbf{z} \in L_G \cup U_G$ then there exists a $[0,1]$-valued, $\mathcal{F}_\infty$-measurable random variable $W_\mathbf{z}$ such that, 
\[
\mathbf{z}^{\eta_n} \to W_\mathbf{z}, \quad \pr^\eta
\text{-a.s.}~\text{and in }L^p(\pr^\eta)\  
\forall p \ge 1.
\]
Moreover if $\mathbf{z} \in L_G $ (resp.~$\mathbf{z} \in U_G$) then $\E^\eta[W_\mathbf{z} | \mathcal{F}_n]\ge \mathbf{z}^{\eta_n}$   (resp.~$\E^\eta[W_\mathbf{z} | \mathcal{F}_n]\le \mathbf{z}^{\eta_n}$)  $\pr^\eta$-a.s.

\end{pro}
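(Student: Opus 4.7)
My plan is to deduce both the sub/supermartingale property and the convergence from Lemma~\ref{lem:martingale} combined with standard martingale theory. Applying Lemma~\ref{lem:martingale} with $k=1$ and $m=n$ yields the identity
\[
\E^\eta[\mathbf{z}^{\eta_{n+1}}\mid\mathcal{F}_n] = G(\mathbf{z})^{\eta_n}.
\]
If $\mathbf{z} \in L_G$, so $G(\mathbf{z}) \ge \mathbf{z}$ coordinatewise, then the monotonicity of $a\mapsto a^k$ on $[0,1]$ for each $k\in\mathbb{N}$ lifts this inequality factor by factor to the convergent infinite product, giving $G(\mathbf{z})^{\eta_n} \ge \mathbf{z}^{\eta_n}$. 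Hence $\{\mathbf{z}^{\eta_n}\}_{n\ge 0}$ is a submartingale under $\pr^\eta$ with respect to $\{\mathcal{F}_n\}$; the $U_G$-case is symmetric and yields a supermartingale.

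Since the sequence $\mathbf{z}^{\eta_n}$ is $[0,1]$-valued (hence $L^1$-bounded), Doob's Martingale Convergence Theorem supplies an $\mathcal{F}_\infty$-measurable random variable $W_\mathbf{z}\in[0,1]$ with $\mathbf{z}^{\eta_n}\to W_\mathbf{z}$ almost surely under $\pr^\eta$. Uniform boundedness by $1$ together with the dominated convergence theorem then promote this to convergence in $L^p(\pr^\eta)$ for every $p\ge 1$.

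For the final assertion I would iterate the one-step estimate. When $\mathbf{z} \in L_G$, induction and monotonicity of $G$ give $G^{(k)}(\mathbf{z})\ge\mathbf{z}$ for every $k\ge 1$, so Lemma~\ref{lem:martingale} yields
\[
\E^\eta[\mathbf{z}^{\eta_{n+k}}\mid\mathcal{F}_n]=G^{(k)}(\mathbf{z})^{\eta_n}\ge\mathbf{z}^{\eta_n}
\]
for all $n\ge 0$ and $k\ge 1$. Letting $k\to\infty$ and using the $L^1$-continuity of conditional expectation together with the $L^1$-convergence established above yields $\E^\eta[W_\mathbf{z}\mid\mathcal{F}_n]\ge\mathbf{z}^{\eta_n}$; the reverse inequality for $\mathbf{z}\in U_G$ is analogous.

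No step poses a real obstacle: the argument is essentially mechanical once Lemma~\ref{lem:martingale} is in hand. The only point requiring a moment's care is that the coordinatewise inequality $G(\mathbf{z})\ge \mathbf{z}$ passes to the (possibly infinite) product $\mathbf{z}^{\eta_n}$, but this is immediate from the nonincreasing-limit definition of the product and the monotonicity of each factor on $[0,1]$.
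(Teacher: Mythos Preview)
Your proof is correct and follows essentially the same route as the paper: both invoke Lemma~\ref{lem:martingale} to obtain the sub/supermartingale inequality, appeal to Doob's convergence theorem together with uniform boundedness for a.s.\ and $L^p$ convergence, and then pass to the limit in the conditional expectation using $L^1$ convergence. The only cosmetic difference is that for the last step you re-apply Lemma~\ref{lem:martingale} with general $k$ and the chain $G^{(k)}(\mathbf{z})\ge\mathbf{z}$, whereas the paper simply cites the submartingale property $\E[\mathbf{z}^{\eta_m}\mid\mathcal{F}_n]\ge\mathbf{z}^{\eta_n}$ for $m\ge n$; these are equivalent.
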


Note that, 
for every $\mathbf{z} \in [0,1]^X$, we have that
$\mathbf{z}^{\eta_n} \to 1$ on $\mathcal{E}(X)$;  whence 
$W_\mathbf{z}=1$, $\pr^\eta$-a.s.~on $\mathcal{E}(X)$.
Moreover, 
if $\mathbf{z} \in L_G$, $\E^\eta[W_\mathbf{z}] \ge \mathbf z^\eta$ and $\E^x[W_\mathbf{z}] \ge \mathbf z(x)$;
similarly if $\mathbf{z} \in U_G$, $\E^\eta[W_\mathbf{z}] \le \mathbf z^\eta$ and $\E^x[W_\mathbf{z}] \le \mathbf z(x)$.
For general $\mathbf z$,
Corollary~\ref{cor:martingale2} 
gives monotonicity of the limit $W_\mathbf{z}$ with respect to $\mathbf{z}$.
Corollary~\ref{cor:martingale} gives the limit of the martingale $\mathbf{z}^{\eta_n}$, when $\mathbf z=\mathbf{q}(A)$.
The submartingale plays a crucial role in the proof of Theorem~\ref{th:moyal2}. The proofs of the following corollaries can be found in Section~\ref{sec:proofs}.

\begin{cor}\label{cor:martingale}
 If 
 $A \subseteq X$, 
 then
 $\mathbf{q}(A)^{\eta_n} \to \ident_{\mathcal{E}(A)}$  $\pr^\eta$-a.s.~and in $L^p(\pr^\eta)$
for all $p \ge 1$.
\end{cor}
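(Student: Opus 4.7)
My plan is to identify $\mathbf{q}(A)^{\eta_n}$ with the Doob martingale $\pr^\eta(\mathcal{E}(A)\mid \mathcal{F}_n)$ and then invoke L\'evy's upward theorem.

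First, I would recall that the extinction probability vector $\mathbf{q}(A)$ is a fixed point of $G$ (a fact already stated in Section~\ref{subsec:genfun}), so in particular $\mathbf{q}(A)\in L_G \cup U_G$. Proposition~\ref{pro:martingale} then produces a $[0,1]$-valued random variable $W_{\mathbf{q}(A)}$ which is both the $\pr^\eta$-a.s.~and the $L^p(\pr^\eta)$ limit (for every $p\ge 1$) of $\mathbf{q}(A)^{\eta_n}$, so the whole task reduces to proving that $W_{\mathbf{q}(A)}=\ident_{\mathcal{E}(A)}$.

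Next I would establish the key identity
\[
\mathbf{q}(A)^{\eta_n} = \pr^\eta\bigl(\mathcal{E}(A)\,\big|\, \mathcal{F}_n\bigr), \qquad \pr^\eta\text{-a.s.}
\]
To this end, the canonical construction~\eqref{eq:evolBRW} shows that conditionally on $\mathcal{F}_n$ the evolution for times $\ge n$ is driven by the i.i.d.~family $\{f_{i,j,y}\}_{j\ge n}$, which is independent of $\mathcal{F}_n$; thus the descendant trees of the $\eta_n(x)$ particles sitting at each site $x$ are independent BRWs, each distributed as the original BRW started from $\delta_x$. Since $|\eta_n|<\infty$ $\pr^\eta$-a.s.~(by induction, from the fact that each $f_{i,n,y}$ lies in $S_X$), the count $\sum_{y\in A}\eta_m(y)$ for $m\ge n$ is a finite sum of contributions of these sub-BRWs, so $\mathcal{E}(A)$ coincides with the intersection of their individual extinction-in-$A$ events. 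By independence, conditioning on $\mathcal{F}_n$ yields $\prod_{x}\mathbf{q}(x,A)^{\eta_n(x)}=\mathbf{q}(A)^{\eta_n}$, as required.

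Finally, since $\mathcal{E}(A)$ is $\mathcal{F}_\infty$-measurable (the process is adapted and $\mathcal{E}(A)$ is defined through $\{\eta_n\}_{n\in\N}$), L\'evy's upward theorem gives $\pr^\eta(\mathcal{E}(A)\mid \mathcal{F}_n)\to \ident_{\mathcal{E}(A)}$ a.s., and $L^p$-convergence is automatic for uniformly bounded martingales. Matching the two limits forces $W_{\mathbf{q}(A)}=\ident_{\mathcal{E}(A)}$, which closes the argument. The main obstacle is the reduction of the event $\mathcal{E}(A)$ to simultaneous extinction-in-$A$ of the finitely many independent sub-BRWs emerging from $\eta_n$; this is essentially a branching/strong Markov argument, and its only subtle ingredient is the a.s.~finiteness $|\eta_n|<\infty$, which ensures that ``eventually zero in $A$'' for each sub-BRW adds up to ``eventually zero in $A$'' for the whole process.
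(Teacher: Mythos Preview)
Your proposal is correct and follows essentially the same route as the paper: both identify $\mathbf{q}(A)^{\eta_n}$ with the Doob martingale $\E^\eta[\ident_{\mathcal{E}(A)}\mid\mathcal{F}_n]$ via the branching/Markov property, and then invoke L\'evy's upward theorem to obtain the a.s.\ and $L^p$ limit $\ident_{\mathcal{E}(A)}$. Your write-up is somewhat more explicit about the finiteness of $|\eta_n|$ and the independence of the sub-BRWs, but the argument is the same.
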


\begin{cor}\label{cor:martingale2}
 If $\mathbf{z}$ and $\mathbf{v}$ are two fixed points, then the following are equivalent
 \begin{enumerate}
  \item $\mathbf{z} \ge  \mathbf{v}$.
  \item $\pr^\eta(W_\mathbf{z} \ge
  W_\mathbf{v})=1$ for every initial condition $\eta \in S_X$.
  \item $\pr^x(W_\mathbf{z} \ge
  W_\mathbf{v})=1$ for every $x \in X$.
 \end{enumerate}
\end{cor}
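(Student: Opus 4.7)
The plan is to close the triangle of implications $(1)\Rightarrow(2)\Rightarrow(3)\Rightarrow(1)$, all three of which should follow quickly once Proposition~\ref{pro:martingale} is in hand; the key observation is that a fixed point $\mathbf{z}\in F_G=L_G\cap U_G$ satisfies \emph{both} sides of the martingale inequality, so the limit $W_\mathbf{z}$ reproduces $\mathbf{z}$ in expectation.

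For $(1)\Rightarrow(2)$, I would argue pathwise. Since $0\le \mathbf{v}(x)\le \mathbf{z}(x)\le 1$ for all $x$ and $\eta_n(x)\in\mathbb{N}$, the factor-wise inequality $\mathbf{v}(x)^{\eta_n(x)}\le \mathbf{z}(x)^{\eta_n(x)}$ holds for every $x$ (with the convention $0^0=1$), hence $\mathbf{v}^{\eta_n}\le \mathbf{z}^{\eta_n}$ pointwise for every $n$. Proposition~\ref{pro:martingale} gives the $\pr^\eta$-a.s.~limits $W_\mathbf{v}$ and $W_\mathbf{z}$ of these two sequences; passing to the limit on a common almost sure event yields $W_\mathbf{v}\le W_\mathbf{z}$, $\pr^\eta$-a.s., for every initial configuration $\eta$. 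The implication $(2)\Rightarrow(3)$ is immediate by specializing to $\eta=\delta_x$.

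The interesting direction is $(3)\Rightarrow(1)$. Because $\mathbf{z}$ is a fixed point, it lies in $L_G\cap U_G$, so Proposition~\ref{pro:martingale} gives both $\E^x[W_\mathbf{z}]\ge \mathbf{z}(x)$ and $\E^x[W_\mathbf{z}]\le \mathbf{z}(x)$, hence the identity
\[
\E^x[W_\mathbf{z}]=\mathbf{z}(x),\qquad \forall x\in X.
\]
The same identity holds for $\mathbf{v}$. If $W_\mathbf{z}\ge W_\mathbf{v}$ holds $\pr^x$-a.s.\ for every $x$, then taking expectations yields $\mathbf{z}(x)=\E^x[W_\mathbf{z}]\ge \E^x[W_\mathbf{v}]=\mathbf{v}(x)$ for every $x$, which is exactly $\mathbf{z}\ge\mathbf{v}$.

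There is no serious obstacle here; the only point that requires a little care is to justify the existence of the $L^p$-limits (so that expectations pass to the limit) for fixed points specifically. This is guaranteed because $\mathbf{z}\in F_G\subseteq L_G\cup U_G$, so the last conclusion of Proposition~\ref{pro:martingale} (a.s.\ and $L^p$ convergence together with the one-sided inequalities $\E^\eta[W_\mathbf{z}|\mathcal F_n]\gtreqless \mathbf{z}^{\eta_n}$) applies from both sides and pins down $\E^x[W_\mathbf{z}]=\mathbf{z}(x)$.
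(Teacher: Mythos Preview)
Your proof is correct and uses the same two key ideas as the paper: the pathwise inequality $\mathbf{v}^{\eta_n}\le \mathbf{z}^{\eta_n}$ for $(1)\Rightarrow(2)$, and the identity $\E^x[W_\mathbf{z}]=\mathbf{z}(x)$ for fixed points to get $(3)\Rightarrow(1)$. Your cycle $(1)\Rightarrow(2)\Rightarrow(3)\Rightarrow(1)$ is in fact slightly more economical than the paper's organization, which proves $(3)\Rightarrow(2)$ directly via a superimposition argument (writing $\eta_n$ as a sum of independent single-particle BRWs and factoring $\mathbf{z}^{\eta_n}$ accordingly); that extra step is unnecessary once the triangle closes, so your route saves work without losing anything.
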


\section{Upper bounds results for extinction probabilities and fixed points}\label{sec:Moyal}

By using the submartingales of Section~\ref{subsec:martingale}, we can remove the assumption of irreducibility
from \cite[Lemma 3.3]{cf:Moyal}, a result which says that, under a mild condition, if the coordinates of $\mathbf{v}\in L_G$
are bounded away from 1, then $\mathbf{v}=\mathbf{q}(X)$.
Note that Theorem \ref{th:moyal2} (1) says that no assumptions are needed to prove that the same property holds for all $\mathbf v$ which are
extinction probability vectors.
Theorem \ref{th:moyal2} plays a key role in Section~\ref{sec:germ}.

\begin{teo}\label{th:moyal2}
Let $(X, \mu)$ be a generic MBP (not necessarily irreducible).
\begin{enumerate}
	\item If $A \subseteq X$ such that $\mathbf{q}(A) \neq \mathbf{q}(X)$, then
	$\sup_{x\in X}\mathbf q(x,A)=1$.
	\item If $\inf_{x \in X} \mathbf{q}(x,X) >0$, then for all $\mathbf z\in L_G$ such that $\mathbf z\ge \mathbf q(X)$, $\mathbf z\neq \mathbf q(X)$,  we have that $\sup_{x\in X}\mathbf z(x)=1$.
\end{enumerate}
\end{teo}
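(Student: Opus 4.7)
The plan is to prove both parts by the same mechanism: turn the hypothesis on extinction probabilities or on $\mathbf z$ into a quantitative lower bound, compare it against the martingale limits from Section~\ref{subsec:martingale}, and extract a contradiction. The central identity is the one from Corollary~\ref{cor:martingale}, which says $\mathbf q(A)^{\eta_n}\to \ident_{\mathcal E(A)}$ almost surely, together with the pointwise bound $\mathbf z^{\eta_n}\le c^{|\eta_n|}$ whenever $\sup_x \mathbf z(x)\le c$, where $|\eta_n|:=\sum_x \eta_n(x)$.

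For part (1), I would argue by contradiction: assume $c:=\sup_{x\in X}\mathbf q(x,A)<1$. By Corollary~\ref{cor:martingale}, $\mathbf q(A)^{\eta_n}\to 1$ on $\mathcal E(A)$. On the other hand, since each coordinate of $\mathbf q(A)$ is at most $c$, one has $\mathbf q(A)^{\eta_n}\le c^{|\eta_n|}$, so on $\mathcal E(A)$ we get $c^{|\eta_n|}\to 1$. Because $c<1$ and $|\eta_n|\in\N$, this forces $|\eta_n|=0$ eventually, i.e.\ we are in $\mathcal E(X)$. Hence $\mathcal E(A)\subseteq\mathcal E(X)$ almost surely; since the reverse inclusion is automatic, $\mathbf q(A)=\mathbf q(X)$, contradicting the hypothesis.

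For part (2), the same bound-plus-Corollary~\ref{cor:martingale} argument, now applied with $A=X$, gives a key preparatory fact: if $d:=\inf_{x\in X}\mathbf q(x,X)>0$, then $\mathbf q(X)^{\eta_n}\ge d^{|\eta_n|}$, and on $\mathcal S(X)$ we have $\mathbf q(X)^{\eta_n}\to 0$, so necessarily $|\eta_n|\to\infty$ on $\mathcal S(X)$. Now assume for contradiction $c:=\sup_x \mathbf z(x)<1$. Since $\mathbf z\in L_G$, Proposition~\ref{pro:martingale} produces a submartingale $\mathbf z^{\eta_n}$ with a.s.\ and $L^p$ limit $W_{\mathbf z}$ satisfying $\E^x[W_{\mathbf z}]\ge \mathbf z(x)$. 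On $\mathcal S(X)$ one has $\mathbf z^{\eta_n}\le c^{|\eta_n|}\to 0$, while on $\mathcal E(X)$ the product is eventually $1$; hence $W_{\mathbf z}=\ident_{\mathcal E(X)}$ a.s., giving $\mathbf z(x)\le \E^x[W_{\mathbf z}]=\mathbf q(x,X)$ for every $x$. Combined with $\mathbf z\ge \mathbf q(X)$ this yields $\mathbf z=\mathbf q(X)$, contradicting $\mathbf z\neq \mathbf q(X)$.

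The main obstacle I anticipate is the step forcing $|\eta_n|\to\infty$ on $\mathcal S(X)$ without assuming irreducibility, since for general (possibly reducible) BRWs the population might in principle survive at a bounded positive level. The clean way around this is precisely the observation above: applying Corollary~\ref{cor:martingale} to $A=X$ and using $\inf_x \mathbf q(x,X)>0$ converts the qualitative survival into a quantitative divergence of $|\eta_n|$. Note that part (1) does not need this stronger divergence, because the submartingale limit $\ident_{\mathcal E(A)}$ is already $0$ or $1$, and the weaker conclusion ``$|\eta_n|=0$ eventually'' suffices to collapse $\mathcal E(A)$ into $\mathcal E(X)$.
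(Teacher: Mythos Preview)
Your argument is correct. For part~(2) it is essentially the paper's own proof: the paper packages your step ``$\inf_x\mathbf q(x,X)>0$ forces $|\eta_n|\to\infty$ on $\mathcal S(X)$'' as Lemma~\ref{lem:martingale2} (proved exactly as you do, via Corollary~\ref{cor:martingale} with $A=X$ and the bound $\mathbf q(X)^{\eta_n}\ge d^{|\eta_n|}$), and then runs the identical submartingale contradiction $\mathbf z(x_0)\le \E^{x_0}[W_{\mathbf z}]=\mathbf q(x_0,X)$.

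For part~(1) there is a genuine, if small, difference: the paper does not prove it here but cites \cite[Corollary~4.2]{cf:BBHZ}. Your route is a clean self-contained alternative using only the martingale machinery already developed in the paper: from $\mathbf q(A)^{\eta_n}\to 1$ on $\mathcal E(A)$ and $\mathbf q(A)^{\eta_n}\le c^{|\eta_n|}$ you squeeze $|\eta_n|=0$ eventually, hence $\mathcal E(A)=\mathcal E(X)$ a.s.\ for every starting point. This has the advantage of keeping the argument internal and of showing that part~(1) needs no hypothesis on $\inf_x\mathbf q(x,X)$ precisely because the target value of the martingale is $1$ rather than $0$, so divergence of $|\eta_n|$ is not required.
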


The assumption $\inf_{x \in X} \mathbf{q}(x,X) >0$, which is needed in the second part of the previous proposition, cannot be removed without replacing it by other assumptions (for instance when $X$ is finite it is not needed, see \cite[Theorem 3.4 and Corollary 3.1]{cf:BZ14-SLS}).
Indeed, without this assuption, there are examples of MBPs with an uncountable number of fixed points $\mathbf{z}$ (clearly different from $\mathbf{q}(X)$) such that $\sup_{x \in X} \mathbf{z}(x)<1$. 
Example~\ref{exmp:moyal1} shows a reducible case, while an irreducible one can be found in Example~\ref{exmp:moyal2}.

\begin{exmp}\label{exmp:moyal1}
 Let $X=\mathbb{N}$ and $\{p_n\}_{n \in \mathbb{N}}$ such that $p_n   \in(0,1)$ for all $n \in \mathbb{N}$ and $\sum_{i=0}^n (1-p_n)<+\infty$; this implies that $\prod_{i=0}^\infty p_i \in (0,1)$ and $\prod_{i=n}^\infty p_i \uparrow 1$ as $n \to +\infty$.
  Consider a MBP where a particle at $n$ has 1 child at $n+1$ with probability $p_n$ and no children with probability $1-p_n$. Clearly, if $\eta_0=\delta_0$ then, for all $n \ge 1$, either $\eta_n=\delta_n$ or $\eta_n=\mathbf{0}$.
 
 A straightforward computation shows that
 $G(\mathbf{z}|n)=1-p_n +p_n\mathbf{z}(n+1)$. 
  Moreover it is easy to show that
 $\mathbf{q}(n,X)=1-\prod_{i=n}^\infty p_i$ whence $\inf_{n \in \mathbb{N}} \mathbf{q}(n,X)=0$.
 More generally, $\mathbf{q}(A)=\mathbf{q}(X)$ if $A$ is infinite and $\mathbf{q}(A)=\mathbf{1}$ if $A$ is finite.
 
 Given $z_0 \in (1-\prod_{i=0}^\infty p_i,\, 1) =(\mathbf{q}(0,X),\, 1)$, then the recursive relation $z_{n+1}:= 1-(1-z_n)/p_n$ uniquely defines a strictly decreasing and strictly positive sequence such that $z_n > 1-\prod_{i=n}^\infty p_i$. 
 Indeed, by rewriting the recursive equality, $1-z_{n+1}=(1-z_n)/p_n>1-z_n$ for all $n \in \N$.
 The inequality $z_n > 1-\prod_{i=n}^\infty p_i$ can be proven easily by induction on $n$.
 Note that $\mathbf{z}(n):=z_n$ for all $n \in \mathbb{N}$ defines a fixed point of $G$.  Moreover $\sup_{n \in \mathbb{N}} \mathbf{z}(n)=\mathbf{z}(0)=z_0<1$.
 
 We observe, that every fixed point $\mathbf{w}$ can be constructed by interation  $\mathbf{w}(n+1):= 1-(1-\mathbf{w}(n))/p_n$ for all $n \in \mathbb{N}$ starting from $\mathbf{w}(0) \in [\mathbf{q}(0,X),\, 1]$.
 Indeed the $0$-th coordinate of a fixed point belongs to the interval $[\mathbf{q}(0,X),\, 1]$ and the iteration equality is equivalent to $G(\mathbf{w}|n)=\mathbf{w}(n)$. Thus, in this case, for every fixed point $\mathbf{w}$ (different from $\mathbf{1}$) we have $\sup_{n \in \mathbb{N}} \mathbf{w}(n)<1$.

\end{exmp}

\begin{exmp}\label{exmp:moyal2}
 Let $X=\mathbb{N}$ and $\{p_n\}_{n \in \mathbb{N}}$ as in Example~\ref{exmp:moyal1}. Moreover let $\{r_n\}$ be a sequence such that $1-p_n-r_n>0$.
  Consider a MBP where a particle at $n \ge 1$ has 1 child at $n+1$ with probability $p_n$, 1 child at $n-1$ with probability $r_n$ and no children with probability $1-p_n-r_n$. 
  Suppose that $r_0=0$, whence a particle at $0$ has 1 child at $1$ with probability $p_0$ and no children with probability $1-p_0$.
  A straightforward computation shows that
 \[
 G(\mathbf{z}|n)=
 \begin{cases}
 1-p_n-r_n +p_n\mathbf{z}(n+1)+ r_n \mathbf{z}(n-1) & n \ge 1\\
 1-p_0+p_o\mathbf{z}(1)& n=0.\\
 \end{cases}
 \]
 Clearly the generating function is smaller that the generating function of Example~\ref{exmp:moyal1}, since
 $G(\mathbf{z}|n)=
  1-p_n+p_n\mathbf{z}(n+1)- r_n (1-\mathbf{z}(n-1) )\le
  1-p_n+p_n\mathbf{z}(n+1)$;
 whence
  $\mathbf{q}(n,X)\le 1-\prod_{i=n}^\infty p_i$; again, $\inf_{n \in \mathbb{N}} \mathbf{q}(n,X)=0$.
 
 In order to prove that there are fixed points, different from $\mathbf{q}(X)$, with all coordinates smaller than $\delta$ (for some $\delta<1$), it suffices to find at least two distinct fixed points with this property.
 
 Given $z_0 \in (1-\prod_{i=0}^\infty p_i,\, 1) \subset (\mathbf{q}(0,X),\, 1)$, the recursive relation 
 \[
 z_{n+1}:=
 \begin{cases}
 1-(1-z_0)/p_0 & n=0\\
 1+(1-z_{n-1})r_n/p_n-(1-z_n)/p_n & n \ge 1\\
 \end{cases}
 \]
 uniquely defines a strictly decreasing and strictly positive sequence such that $z_n > 1-\prod_{i=n}^\infty p_i$. 
 more precisely, we prove that $z_0 \ge z_{n-1}>z_n > 1-\prod_{i=n}^\infty p_i$  by induction on $n$. The inequality $1-\prod_{i=1}^\infty p_i<z_1 < z_0$ is trivial.
 Suppose that $ 1-\prod_{i=n}^\infty p_i<z_{n}<z_{n-1}\le z_0$,
 that is $\prod_{i=n}^\infty p_i>1-z_n>1-z_{n-1} \ge 1-z_0$.
 Note that,
 $1-z_{n+1}=((1-z_n)-(1-z_{n-1})r_n)/p_n >((1-z_n)-(1-z_{n})r_n)/p_n >(1-z_n)(1-r_n)/p_n>1-z_n \ge 1-z_0$
 since, by hypothesis, $1-p_n-r_n>0$, that is, $(1-r_n)/p_n>1$. On the other hand, since
 $1-z_{n-1} > 1-z_0>0$, we have
 $1-z_{n+1}=((1-z_n)-(1-z_{n-1})r_n)/p_n < (1-z_n)/p_n<
 p_n^{-1}\prod_{i=n}^\infty p_i=\prod_{i=n+1}^\infty p_i$.
   Then $\mathbf{z}(n):=z_n$ for all $n \in \mathbb{N}$ defines a fixed point of $G$ with $\sup_{n \in \mathbb{N}} \mathbf{z}(n)=\mathbf{z}(0)=z_0<1$. 
  
  Moreover, as in Example~\ref{exmp:moyal1}, all fixed points $\mathbf{w}$ (different from $\mathbf{1}$) satisfy $\sup_{n \in \mathbb{N}} \mathbf{w}(n)<1$.
 
\end{exmp}

On may wonder when $\inf_{x \in X} \mathbf{q}(x, X) >0$ holds; the following remark gives a sufficient condition.

\begin{rem}\label{rem:conditioninf}
	If $\inf_{x\in X}\mu_x(\mathbf 0)>0$, then $\inf_{x \in X} \mathbf{z}(x) >0$ for every fixed point $\mathbf{z}$(including $\mathbf{q}(A)$ for every $A \subseteq X$). Indeed
	$\mathbf z(x)=G(\mathbf z|x)\ge\mu_x(\mathbf 0)$.
	
	Note that the existence of a nonempty subset $A$ satisfying $\inf_{x \in X} \mathbf{q}(x, A) >0$ implies the existence of $y \in X$ such that 
	$\inf_{x \in X} \mathbf{q}(x, y) >0$.
\end{rem}

It is worth noting that the existence of a positive lower bound for an extinction probability vector is a sufficient condition for the asymptotic explosion of the population. A precise statement is given by the following lemma.

\begin{lem}\label{lem:martingale2}
	Let $A \subseteq X$.
	If $\inf_{x \in X} \mathbf{q}(x, A) >0$, then
	$\pr^\eta \big (\{\sum_{x \in X} \eta_n(x)\to+\infty\} \cap \mathcal{S}(A))=\pr^\eta(\mathcal{S}(A) \big )$.
\end{lem}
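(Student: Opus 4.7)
The plan is to combine the pointwise lower bound $\mathbf{q}(A) \ge \delta \mathbf 1$ (where $\delta := \inf_{x \in X} \mathbf{q}(x,A) > 0$) with the almost sure convergence $\mathbf{q}(A)^{\eta_n} \to \ident_{\mathcal{E}(A)}$ supplied by Corollary~\ref{cor:martingale}. The intuition is that on $\mathcal{S}(A)$ the product $\mathbf{q}(A)^{\eta_n}$ must vanish, and since each factor is bounded below by $\delta \in (0,1]$, the only way to force the product to $0$ is to have the total number $|\eta_n| := \sum_{x \in X} \eta_n(x)$ of particles tend to $+\infty$.

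Concretely, first I would write, for every $n$ and every sample path (using $\eta_n \in S_X$ a.s.),
\[
\mathbf{q}(A)^{\eta_n} \;=\; \prod_{x\in X} \mathbf{q}(x,A)^{\eta_n(x)} \;\ge\; \delta^{\,|\eta_n|}.
\]
Next, decompose the complement of $\{|\eta_n|\to+\infty\}$ as
\[
\{|\eta_n|\not\to+\infty\} \;=\; \bigcup_{M\in\mathbb N}\,\bigl\{\,|\eta_n|\le M\text{ i.o.}\bigr\}.
\]
On $\{|\eta_n|\le M\text{ i.o.}\}$ there is a (random) subsequence $n_k$ with $|\eta_{n_k}|\le M$, so that $\mathbf{q}(A)^{\eta_{n_k}}\ge \delta^M>0$ for all $k$, and hence $\liminf_n \mathbf{q}(A)^{\eta_n}\ge \delta^M>0$.

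On the other hand, Corollary~\ref{cor:martingale} gives $\mathbf{q}(A)^{\eta_n}\to \ident_{\mathcal{E}(A)}$ $\pr^\eta$-a.s., so a.s.\ on $\mathcal{S}(A)$ the limit equals $0$. This is incompatible with $\liminf_n \mathbf{q}(A)^{\eta_n}\ge \delta^M>0$, so for each fixed $M$ we obtain
\[
\pr^\eta\bigl(\{|\eta_n|\le M\text{ i.o.}\}\cap \mathcal{S}(A)\bigr)=0.
\]
Taking a countable union over $M\in\mathbb N$ yields $\pr^\eta(\{|\eta_n|\not\to+\infty\}\cap\mathcal{S}(A))=0$, which is exactly the claim.

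There is essentially no technical obstacle here: the only minor point to verify is that $\eta_n \in S_X$ (i.e.\ $|\eta_n|<+\infty$) holds a.s., which follows inductively from the construction~\eqref{eq:evolBRW} since $\eta_0 \in S_X$ and each $f_{i,n,y}\in S_X$ a.s. The crucial input is Corollary~\ref{cor:martingale}; once that is in hand, the argument reduces to a one-line contradiction between $\delta>0$ and the a.s.\ vanishing of $\mathbf{q}(A)^{\eta_n}$ on $\mathcal{S}(A)$.
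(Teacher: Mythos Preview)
Your proposal is correct and follows essentially the same approach as the paper: both use the key bound $\mathbf{q}(A)^{\eta_n}\ge \delta^{|\eta_n|}$ together with Corollary~\ref{cor:martingale} to conclude that on $\mathcal{S}(A)$ the total population must explode. The only difference is cosmetic: the paper argues directly that $0=\lim_n \mathbf{q}(A)^{\eta_n}\ge \lim_n \delta^{|\eta_n|}$ forces $|\eta_n|\to+\infty$ (after disposing of the trivial case $\delta=1$), whereas you spell out the contrapositive via the decomposition $\{|\eta_n|\not\to+\infty\}=\bigcup_M\{|\eta_n|\le M\text{ i.o.}\}$.
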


\section{Germ order: survival and strong survival}\label{sec:germ}

Here we discuss survival and strong survival for MBPs under different types of stochastic dominations. We generalize the results in \cite{cf:Hut2022} by considering general MBPs instead of independent-diffusion MBPs projected on a branching process (see Section~\ref{subsec:process} for the definition).

The main result of this section is the following; this result generalizes \cite[Theorem 1.3]{cf:Hut2022}. Although our proof uses similar arguments, we stress that Theorem~\ref{th:moyal2} is the essential key which allows us to overcome the technical difficulties arising in our general case

\begin{teo}\label{th:germorder}
Let $\boldmu \gegerm \boldnu$ (with $\delta<1$) and  $A \subseteq X$.
\begin{enumerate}
	\item If $x \in X$ then $\mathbf{q}^\boldmu(x,A) \le \mathbf{q}^\boldnu(x,A) (1-\delta)+\delta$.
 \item If $x \in X$, then $\mathbf{q}^\boldnu(x,A)<\mathbf{1}$ implies 
$\mathbf{q}^\boldmu(x,A)<\mathbf{1}$.
 \item If $\sup_{x \in X} \mathbf{q}^\boldnu(x,X)<1$, then $\mathbf{q}^\boldnu(x,A)=\mathbf{q}^\boldnu(x,X)$ 
for all $x \in X$ implies
 $\mathbf{q}^\boldmu(x,A)=\mathbf{q}^\boldmu(x,X)$ 
 for all $x \in X$.
\end{enumerate}
\end{teo}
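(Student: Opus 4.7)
The plan is to focus on part~(1), from which parts~(2) and~(3) fall out quickly. For part~(1), I introduce the auxiliary vector
\[
\mathbf w := (1-\delta)\,\mathbf q^\boldnu(A) + \delta\,\mathbf 1,
\]
whose coordinates all lie in $[\delta,1]$, so the germ inequality $G_\boldmu\le G_\boldnu$ is applicable at $\mathbf w$. Combining this with the convexity of $G_\boldnu$ and the fact that both $\mathbf q^\boldnu(A)$ and $\mathbf 1$ are fixed points of $G_\boldnu$, I obtain
\[
G_\boldmu(\mathbf w)\le G_\boldnu(\mathbf w) = G_\boldnu\bigl((1-\delta)\mathbf q^\boldnu(A)+\delta\mathbf 1\bigr)\le (1-\delta)\mathbf q^\boldnu(A)+\delta\mathbf 1 = \mathbf w,
\]
so $\mathbf w\in U_{G_\boldmu}$. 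By Proposition~\ref{pro:closed} the sequence $G_\boldmu^{(n)}(\mathbf w)$ decreases to a fixed point $\mathbf z^\star$ of $G_\boldmu$ with $\mathbf q^\boldmu(X)\le\mathbf z^\star\le\mathbf w$, and it remains to upgrade this to $\mathbf q^\boldmu(A)\le\mathbf w$.

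To close this step I plan to approximate $\mathbf q^\boldmu(A)$ from below via the iterates of $G_\boldmu$ applied to $\pi_n(x):=\pr^x_\boldmu\bigl(\eta_k(A)=0\ \forall k\ge n\bigr)$, which satisfies $\pi_{n+1}=G_\boldmu(\pi_n)$ and $\pi_n\uparrow\mathbf q^\boldmu(A)$ by monotone convergence and the branching property. If the base estimate $\pi_0\le\mathbf w$ is established, then monotonicity of $G_\boldmu$ combined with $G_\boldmu(\mathbf w)\le\mathbf w$ propagates the inequality to every $n$, and passing to the limit gives $\mathbf q^\boldmu(A)\le\mathbf w$. The estimate $\pi_0\le\mathbf w$ is trivial on $A$ (there $\pi_0=0$ while $\mathbf w\ge\delta$) but on $A^c$ it reduces to the non-trivial bound $\pr^x_\boldmu(\text{never visit }A)\le(1-\delta)\mathbf q^\boldnu(x,A)+\delta$, and this is where I expect the main difficulty: the argument should rest on the interplay of the supermartingale $\mathbf w^{\eta_n}$ provided by Proposition~\ref{pro:martingale}, the martingale $\mathbf q^\boldmu(A)^{\eta_n}$ of Corollary~\ref{cor:martingale}, and the rigidity of Theorem~\ref{th:moyal2} to force the limit $\mathbf z^\star$ of $G_\boldmu^{(n)}(\mathbf w)$ to dominate $\mathbf q^\boldmu(A)$, ruling out a pathological fixed point of $G_\boldmu$ sitting strictly between $\mathbf q^\boldmu(X)$ and $\mathbf q^\boldmu(A)$.

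Parts~(2) and~(3) then follow with little further work. Part~(2) is immediate, since $\mathbf q^\boldnu(x,A)<1$ yields $\mathbf q^\boldmu(x,A)\le(1-\delta)\mathbf q^\boldnu(x,A)+\delta<1$ by part~(1). For part~(3), the hypotheses $\mathbf q^\boldnu(A)=\mathbf q^\boldnu(X)$ and $\sup_x\mathbf q^\boldnu(x,X)<1$ together with part~(1) give
\[
\sup_x\mathbf q^\boldmu(x,A)\le(1-\delta)\sup_x\mathbf q^\boldnu(x,X)+\delta<1,
\]
so that Theorem~\ref{th:moyal2}(1) applied to the $\boldmu$-BRW forces $\mathbf q^\boldmu(A)=\mathbf q^\boldmu(X)$: otherwise one would have $\sup_x\mathbf q^\boldmu(x,A)=1$, contradicting the strict bound displayed above. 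This is the point at which the Moyal-type rigidity of Section~\ref{sec:Moyal} is indispensable, converting the quantitative control of part~(1) into the qualitative strong-survival statement claimed in~(3).
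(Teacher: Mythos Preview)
Your convexity observation is correct and elegant: since $\mathbf w:=(1-\delta)\mathbf q^\boldnu(A)+\delta\mathbf 1\in[\delta,1]^X$, the germ inequality together with convexity of $t\mapsto G_\boldnu(\mathbf q^\boldnu(A)+t(\mathbf 1-\mathbf q^\boldnu(A)))$ indeed gives $G_\boldmu(\mathbf w)\le\mathbf w$. Likewise your recursion $\pi_{n+1}=G_\boldmu(\pi_n)$ with $\pi_n\uparrow\mathbf q^\boldmu(A)$ is valid, and the deductions of (2) and (3) from (1) are fine (and match the paper's).

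The gap is precisely the step you flag: the base estimate $\pi_0(x)\le\mathbf w(x)$ for $x\notin A$, i.e.\ $\pr^x_\boldmu(L(A)=0)\le(1-\delta)\mathbf q^\boldnu(x,A)+\delta$. This is a genuine \emph{cross-measure} inequality comparing a $\boldmu$-hitting probability to a $\boldnu$-extinction probability, and none of the tools you invoke produce it. The supermartingale $\mathbf w^{\eta_n}$ (under $\pr_\boldmu$) only yields $\E^x_\boldmu[W_\mathbf w]\le\mathbf w(x)$ and hence $\mathbf q^\boldmu(X)\le\mathbf w$; it says nothing about $\mathbf q^\boldmu(A)$ unless you already know $\mathbf q^\boldmu(A)\le\mathbf w$, which is what you are trying to prove. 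Theorem~\ref{th:moyal2} goes in the wrong direction: it forces an extinction vector (or a fixed point, under an extra hypothesis you do not have) with supremum $<1$ down to $\mathbf q^\boldmu(X)$; it does not force an arbitrary fixed point $\mathbf z^\star\le\mathbf w$ to lie \emph{above} $\mathbf q^\boldmu(A)$. In short, $\mathbf w\in U_{G_\boldmu}$ controls only the smallest fixed point of $G_\boldmu$ below $\mathbf w$, and there is no mechanism in your outline relating that to the particular extinction vector $\mathbf q^\boldmu(A)$.

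The paper closes this gap by an entirely different route that never passes through $\mathbf w$ or $U_{G_\boldmu}$. It first proves a Laplace-type comparison (Lemma~\ref{lem:germorder}): for $t\in[\delta,1]$,
\[
\E^x_\boldnu\big[t^{\ident(L(A)>0)}\big]\ \ge\ t\vee\E^x_\boldmu\big[t^{L(A)}\big],
\]
via a simultaneous iteration of two truncated operators $I_\boldmu,I_\boldnu$ on $[t,1]^X$, tracking on the $\boldmu$-side the total number of visits to $A$ and on the $\boldnu$-side the number of ``first entries'' to a suitable enlargement of $A$. The $\boldmu$/$\boldnu$ coupling happens inside this iteration, not through a single convexity step. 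Then (Lemma~\ref{lem:germorder2}) a space-time embedding upgrades ``ever visit $A$'' to ``visit $A$ infinitely often'', turning the Laplace comparison into
\[
\mathbf q^\boldnu(x,A)\ \ge\ \frac{\mathbf q^\boldmu(x,A)\vee t - t}{1-t},\qquad t\in[\delta,1),
\]
which is exactly part~(1) after rearrangement. If you want to salvage your scheme, the missing ingredient is essentially Lemma~\ref{lem:germorder}: applied with $t=\delta$ it gives $\pi_0(x)\le\E^x_\boldmu[\delta^{L(A)}]\le\delta+(1-\delta)\pr^x_\boldnu(L(A)=0)\le\mathbf w(x)$, after which your iteration goes through. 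But proving that lemma is the real work, and it is not reachable from Propositions~\ref{pro:closed}, \ref{pro:martingale} or Theorem~\ref{th:moyal2} alone.
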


Roughly speaking, survival in $A$ for $(X, \boldnu)$ implies survival in $A$ for $(X, \boldmu)$. Moreover strong survival in $A$ for $(X, \boldnu)$ implies strong survival in $A$ for $(X, \boldmu)$.

Clearly, the germ order is not the only condition which allows to deduce strong survival for $(X, \boldmu)$ given the same behaviour for $(X,\boldnu)$. For instance if $\mu_x$ and $\nu_x$ agree outside a set $A$, then strong survival in $A$ for $(X, \boldmu)$ is equivalent to strong survival for $(X, \boldnu)$ (see \cite[Theorem 4.2]{cf:BZ2017} or \cite[Theorem 2.4]{cf:BZ2020}).

We note that the condition $\sup_{x \in X} \mathbf{q}^\boldnu(x,X)<1$ in Theorem~\ref{th:germorder} (and in Theorem~\ref{th:pgforder} below) are not necessary but it cannot be removed (see the discussion in Example~\ref{exmp:continuoustime}).  

As a warm-up, in Section~\ref{sec:proofs} we start by proving the same result under the stronger assumption $\boldmu \gepgf \boldnu$. Under this assumption, one can easily prove that $\mathbf{q}^\mu(X) \le \mathbf{q}^\nu(X)$; indeed $G_\mu(\mathbf{q}^\nu(X)) \le  G_\nu(\mathbf{q}^\nu(X))=\mathbf{q}^\nu(X)$. The following result generalizes \cite[Corollary 2.2]{cf:Hut2022}. As in the previous case, Theorem~\ref{th:moyal2} simplifies part of the proof of Theorem~\ref{th:pgforder} compared to \cite[Corollary 2.2]{cf:Hut2022}.
 
\begin{teo}\label{th:pgforder}
Let $\boldmu \gepgf \boldnu$ and  $A \subseteq X$.
\begin{enumerate}
	\item If $x \in X$, then $\mathbf{q}^\boldmu(x,A) \le \mathbf{q}^\boldnu(x,A)$; in particular $\mathbf{q}^\boldmu(x,A)=\mathbf{1}$ implies $\mathbf{q}^\boldnu(x,A)=\mathbf{1}$.
 \item If $\sup_{x \in X} \mathbf{q}^\boldnu(x,X)<1$, then
 $\mathbf{q}^\boldnu(x,A)=\mathbf{q}^\boldnu(x,X)$ for all $x \in X$ implies
 $\mathbf{q}^\boldmu(x,A)=\mathbf{q}^\boldmu(x,X)$ for all $x \in X$.
\end{enumerate}
\end{teo}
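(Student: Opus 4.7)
The plan is to prove Part (1) by realising $\mathbf{q}^\boldmu(A)$ as a monotone limit of generating-function iterates acting on a well-chosen starting vector, so that the pointwise inequality $G_\boldmu\le G_\boldnu$ can be pushed through the iteration; Part (2) will then follow immediately from Part (1) together with Theorem~\ref{th:moyal2}(1).

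For Part (1), I first introduce the auxiliary vector
\[
\mathbf{p}^\boldmu(x):=\pr^x_\boldmu\Big(\sum_{y\in A}\eta_k(y)=0\ \text{for all }k\ge 0\Big),
\]
the probability that the $\boldmu$-process starting from $x$ never visits $A$ (and define $\mathbf{p}^\boldnu$ analogously). A routine Markov-property computation gives $\mathbf{p}^\boldmu\equiv 0$ on $A$ and $\mathbf{p}^\boldmu(x)=G_\boldmu(\mathbf{p}^\boldmu)(x)$ for $x\in A^c$; more usefully, setting $T^\boldmu(\mathbf z)(x):=G_\boldmu(\mathbf z)(x)\ident_{x\in A^c}$, an induction on $k$ shows that $(T^\boldmu)^k(\ident_{A^c})(x)=\pr^x_\boldmu(\sum_{y\in A}\eta_j(y)=0\ \text{for }0\le j\le k)$, so this sequence decreases in $k$ to $\mathbf{p}^\boldmu$. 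Since $G_\boldmu\le G_\boldnu$ gives $T^\boldmu\le T^\boldnu$ pointwise and each operator is monotone in its argument, a single induction on $k$ yields $(T^\boldmu)^k(\ident_{A^c})\le(T^\boldnu)^k(\ident_{A^c})$ for every $k$, hence $\mathbf{p}^\boldmu\le\mathbf{p}^\boldnu$ in the limit.

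The second step is the identity
\[
\mathbf{q}^\boldmu(x,A)=\lim_{n\to\infty}\pr^x_\boldmu\Big(\sum_{y\in A}\eta_k(y)=0\ \forall\, k\ge n\Big)=\lim_{n\to\infty}G_\boldmu^{(n)}(\mathbf{p}^\boldmu)(x),
\]
obtained by expressing $\mathcal E(A)$ as the increasing union over $n$ of the tail events $\{\sum_{y\in A}\eta_k(y)=0\ \forall\, k\ge n\}$ and invoking Lemma~\ref{lem:martingale} in the form $\pr^x_\boldmu(\sum_{y\in A}\eta_k(y)=0\ \forall\, k\ge n)=\E^x_\boldmu[(\mathbf{p}^\boldmu)^{\eta_n}]=G_\boldmu^{(n)}(\mathbf{p}^\boldmu)(x)$. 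A second induction on $n$---with base $\mathbf{p}^\boldmu\le\mathbf{p}^\boldnu$ and step using monotonicity of $G_\boldmu$ together with $G_\boldmu\le G_\boldnu$---gives $G_\boldmu^{(n)}(\mathbf{p}^\boldmu)\le G_\boldnu^{(n)}(\mathbf{p}^\boldnu)$ for all $n$, and passing to the limit proves $\mathbf{q}^\boldmu(A)\le\mathbf{q}^\boldnu(A)$.

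For Part (2), Part (1) combined with the hypothesis $\mathbf{q}^\boldnu(A)=\mathbf{q}^\boldnu(X)$ yields $\sup_{x\in X}\mathbf{q}^\boldmu(x,A)\le\sup_{x\in X}\mathbf{q}^\boldnu(x,X)<1$, and the contrapositive of Theorem~\ref{th:moyal2}(1) forces $\mathbf{q}^\boldmu(A)=\mathbf{q}^\boldmu(X)$. The main obstacle is Part (1): one must single out $\mathbf{q}^\boldmu(A)$ among the (possibly uncountable) fixed points of $G_\boldmu$ in a way compatible with the pgf order, since the naive approach---observing only that $\mathbf{q}^\boldnu(A)\in U_{G_\boldmu}$ and iterating $G_\boldmu$ downwards from $\mathbf{q}^\boldnu(A)$---recovers merely the smallest-fixed-point bound $\mathbf{q}^\boldmu(X)\le\mathbf{q}^\boldnu(A)$. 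The two-stage construction (first $T^\boldmu$ to produce $\mathbf{p}^\boldmu$, then $G_\boldmu^{(n)}$ to build $\mathbf{q}^\boldmu(A)$ from below) is exactly the device that upgrades this into the sharper inequality $\mathbf{q}^\boldmu(A)\le\mathbf{q}^\boldnu(A)$.
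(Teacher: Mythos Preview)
Your proof is correct, and Part~(2) is identical to the paper's argument. For Part~(1), however, you take a different route. The paper proceeds via the Laplace transform of the total occupation time $L(A):=\sum_{n}\sum_{y\in A}\eta_n(y)$: a single induction on $n$ shows
\[
\E_\boldmu^x\big[e^{-tL_n(A)}\big]=\big(e^{-t}\ident_A(x)+\ident_{A^c}(x)\big)\,G_\boldmu\big(\E_\boldmu^{(\cdot)}[e^{-tL_n(A)}]\,\big|\,x\big)\le \E_\boldnu^x\big[e^{-tL_n(A)}\big],
\]
and then letting $n\to\infty$ and $t\downarrow 0$ recovers $\mathbf{q}^\boldmu(x,A)=\pr_\boldmu^x(L(A)<\infty)\le \mathbf{q}^\boldnu(x,A)$. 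Your two-stage construction (first the masked iteration $T^\boldmu$ down to the never-hit vector $\mathbf{p}^\boldmu$, then $G_\boldmu^{(n)}$ upwards to $\mathbf{q}^\boldmu(A)$) is essentially the $t\to 0$ degeneration of the paper's recursion, followed by an extra pass to turn ``never hit $A$'' into ``eventually stop hitting $A$''. The paper's approach is a little more economical (one induction, one limit); yours is arguably more transparent, as it avoids the auxiliary parameter $t$ and works directly with avoidance probabilities, and it makes explicit the representation $\mathbf{q}^\boldmu(A)=\lim_n G_\boldmu^{(n)}(\mathbf{p}^\boldmu)$, which is of independent interest.
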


	\begin{rem}\label{rem:lowestfixedpoint1}
		One may wonder when condition $\sup_{x \in X} \mathbf{q}(x,X)<1$ is satisfied.
		We note that it holds
if and only if there exist $\mathbf{v} \in [0,1]$ and $\delta \in [0,1]$ such that $G^{(n)}(\mathbf{v}) \le \mathbf{v} \le \delta \mathbf{1}$ for some $n \ge 1$ (apply Proposition~\ref{pro:closed}). In particular if 
		\begin{equation}\label{eq:easyconditionG}
			G^{(n)}(\delta \mathbf{1}) \le \delta \mathbf{1} \textrm{ for some } n \ge 1 \textrm{ and } \delta \in [0,1],
		\end{equation}
		then 
		$\sup_{x \in X} \mathbf{q}(x,X)\le \delta$.
		An easy computation shows that $G(\delta \mathbf{1}|x)= \sum_{n \in \N} \rho_x(n) \delta^n$ where $\rho_x$ is the law of the number of children of a particle at $x$ (see the definition in Section~\ref{subsec:process}). Whence if the family of laws $\{\rho_x \colon 
			x \in X\}$ is finite and they are all supercritical,
			then equation~\eqref{eq:easyconditionG} holds.
			 Indeed, in this case, for each $x$ there exist $\delta_x \in [0,1)$ such that $\sum_{n \in \N} \rho_x(n) \delta_x^n \le \delta_x$ (choose $\delta_x=\delta_y$ if $\rho_x=\rho_y$) thus
			$G(\delta \mathbf{1}) \le \delta \mathbf{1}$ where 
			$\delta=\max_{x \in X}\delta_x$. However, condition~\eqref{eq:easyconditionG} may be satisfied even when $\rho_x$ is subcritical for some $x \in X$. 
%
\end{rem}

\begin{rem}\label{rem:lowestfixedpoint2}
	Another setting where it is easy to verify that $\sup_{x \in X} \mathbf{q}(X)<1$ is the case of $\mathcal{F}$-MBPs.
	For these MBPs, $\mathbf{q}(\cdot, X)$ assumes only a finite number of values.
	Indeed, $(X, \boldmu)$ is an $\mathcal{F}$-MBP if it can be \textit{projected} onto a MBP $(Y,\boldnu)$ where $Y$ is finite; more precisely, there exist a surjective map $g : X \mapsto Y$ such that $G_\boldmu(\mathbf{z}\circ g)=G_\boldnu(\mathbf{z})\circ g$ for all $\mathbf{z} \in [0,1]^Y$ (see \cite[Section 3.1]{cf:BZ4} for explicit computations). In \cite[Section 2.3]{cf:BZ2017} it has been shown that $\mathbf{q}^\boldmu(X)=\mathbf{q}^\boldnu(X) \circ g$ whence, if $(Y, \boldnu)$ is supercritical and irreducible, then $\sup_{x \in X} \mathbf{q}^\boldmu(x,X)=
			\sup_{x \in X} \mathbf{q}^\boldnu(g(x),Y)=\max_{y \in Y}\mathbf{q}^\boldnu(y,Y)<1$.
			A characterization of $\mathcal{F}$-MBPs with independent diffusion is given in
			\cite[Proposition 4.8]{cf:BCZ2023}.
		\end{rem}


Conditions for survival in $A$ or in $X$ for general MBPs are usually difficult to find (see for instance \cite[Theorem 4.1]{cf:Z1} and \cite[Theorems 3.1 and 3.2]{cf:BZ14-SLS}). 
Theorem~\ref{th:germorder} (2) and Proposition~\ref{pro:germindepdiff} together provide a powerful tool to prove survival for 
MBPs with independent diffusion. 
Indeed suppose that $(X, \boldnu)$ is a MBP with independent diffusion and survives in $A$. Then, any other MBP $(X, \boldmu)$
with independent diffusion, with the same matrix $P$, such that
 condition (2) of Proposition~\ref{pro:germindepdiff} holds, 
survives in $A$, no matter how inhomogeneous the offspring distributions of $(X, \boldmu)$ are.
This applies for instance to the case of global survival ($A=X$).
If $(X,\boldnu)$ is an $\mathcal{F}$-MBP, then it survives globally if and only if the Perron-Frobenius eigenvalue of a finite matrix is strictly larger than 1 (see \cite[Theorem 4.3]{cf:Z1}, \cite[Theorem 3.1]{cf:BZ14-SLS} and \cite[Section 2.4]{cf:BZ14-SLS}). An $\mathcal{F}$-MBP with independent diffusion is completely described by \cite[Proposition 4.8]{cf:BCZ2023}.
Thus, we may be able to identify when $(X,\boldnu)$ survives globally and,
by Theorem~\ref{th:germorder} (2), claim that $(X, \boldmu)$ survives globally as well, even if 
$(X,\boldmu)$ can be fairly inhomogeneous.

We observe that Proposition~\ref{pro:germindepdiff} 
gives a condition for MBPs with independent diffusion equivalent to the germ order.
An application of Proposition~\ref{pro:germindepdiff} is the following: suppose that $(X, \boldnu)$ is an irreducible and quasi-transitive MBP with independent diffusion (see for instance \cite[Section 2.4]{cf:BZ14-SLS}). Consider another MBP with independent diffusion $(X,\boldmu)$  such that condition (2) of Proposition~\ref{pro:germindepdiff} holds. If there exists
$x \in X$ such that $\mathbf{q}^\boldnu(x,x)<1$, then for every nonempty set $A \subseteq X$ we have $\mathbf{q}^\boldmu(w,X)=\mathbf{q}^\boldmu(w,A)< 1$ for all $w \in X$. Indeed, if $y \in A \subseteq X$, according to \cite[Corollary 3.2]{cf:BZ14-SLS}, $\mathbf{q}^\boldnu(x,x)<1$ implies $\mathbf{q}^\boldnu(w,X) \le \mathbf{q}^\boldnu(x, A) \le \mathbf{q}^\boldnu(w,y)=\mathbf{q}^\boldnu(w,X)<1$ for all $w \in X$. Moreover, since a quasi-transitive MBP is an $\mathcal{F}$-MBP, by Remark~\ref{rem:lowestfixedpoint2} we have $\sup_{w \in X} \mathbf{q}^\boldnu(w, X)<1$.
Proposition~\ref{pro:germindepdiff} and Theorem~\ref{th:germorder} yields the claim.

The following example shows that if we have two MBPs with independent 
diffusion and the offspring distribution is geometric, then the pgf and germ ordering are both equivalent to the coordinate-wise
ordering of the first moment matrices.

\begin{exmp}\label{exmp:continuoustime}

		If $\boldmu$ satisfies equation~\eqref{eq:particular1}, then 
$G_\boldmu({\mathbf{z}}|x)=\sum_{n \in \N} \rho_x(n) (P{\mathbf{z}}(x))^n$
(see Section~\ref{subsec:genfun}).
If, in particular, $\rho_x(n)=\frac{1}{1+\bar \rho_x} (\frac{\bar \rho_x}{1+\bar \rho_x} )^n$ 
(as in the discrete-time counterpart of a continuous-time MBP,  see \cite[Section 2.2]{cf:Z1} for details), then
the previous expression becomes $G_\boldmu({\mathbf{z}}|x)=(1+\bar \rho_x P(\mathbf{1}-\mathbf{z})(x))^{-1}$ 
or, in a more compact way, 
\begin{equation}\label{eq:Gcontinuous}
	G_\boldmu({\mathbf{z}})= \frac{\mathbf{1}}{\mathbf{1}+M_\boldmu(\mathbf{1}-{\mathbf{z}})}
\end{equation}
where $M_\boldmu$ is the first-moment matrix and $M_\boldmu \mathbf{v}(x)=\bar \rho_x P\mathbf{v}(x)$.
Suppose that $\boldmu$ and $\boldnu$ satisfy equation~\eqref{eq:particular1} (possibly with different matrices $P_\boldmu$ and $P_\boldnu$); let $M_\boldmu$ and $M_\boldnu$ be the first moment matrices of $\boldmu$ and $\boldnu$ respectively. By using equation~\eqref{eq:Gcontinuous}, the following assertions are equivalent: (1) $M_\boldmu \ge M_\boldnu$ (with the usual natural partial order), (2) $M_\boldmu \mathbf{v} \ge M_\boldnu \mathbf{v}$ for all $\mathbf{v} \in [0,1]^X$, (3)
$\boldmu \gepgf \boldnu$, (4) $\boldmu \gegerm \boldnu$. Therefore, Theorem~\ref{th:pgforder}(1) applies, to ensure that extinction
of the $\boldmu$-process implies extinction of the $\boldnu$-process.	
In order to apply Theorem~\ref{th:pgforder}(2) (strong survival with $\boldnu$ implies strong survival with $\boldmu$),
we need $\sup_{x \in X} \mathbf{q}^\nu(x, X)<1$. 
According to Remark~\ref{rem:lowestfixedpoint1}, a sufficient condition for $\sup_{x \in X} \mathbf{q}^\nu(x, X)<1$ is the existence of $\delta <1$ such that $G_\nu(\delta \ident |x) \le \delta$ for all $x \in X$: if $G$ is as in equation~\eqref{eq:Gcontinuous} this condition is equivalent to $\inf_{x \in X} \bar \rho_x^\boldnu >1$. 

The above equivalences, along with Theorem~\ref{th:pgforder}(2), seem to suggest a monotonicity of strong local survival with respect to the first moment matrix, that is, that increasing the first moments of a process with strong survival, produces a new process which still has 
strong survival;
 however we know from \cite[Example 4.2]{cf:BZ14-SLS} that this is false. 
 Indeed  \cite[Example 4.2]{cf:BZ14-SLS} shows that one can find three ordered first moment matrices $M_\boldnu \le M_\boldmu \le M_{\bm{\sigma}}$ such that, for all finite $A \subset X$,  $\mathbf{q}^\boldnu(A)=\mathbf{q}^\boldnu(X)$, $\mathbf{q}^{\bm{\sigma}}(A)=\mathbf{q}^{\bm{\sigma}}(X)$ but $\mathbf{q}^\boldnu(A)>\mathbf{q}^\boldnu(X)$ (that is, strong survival with $\bm\nu$ and $\bm\sigma$, but not with $\bm\mu$).
 
 In particular,  this
 proves that the condition $\sup_{x \in X} \mathbf{q}^\boldnu(x,X)<1$ in Theorems~\ref{th:germorder}~(3)~and~\ref{th:pgforder}~(2) cannot be removed. Indeed in  \cite[Example 4.2]{cf:BZ14-SLS}, $\sup_{x \in X} \mathbf{q}^\boldnu(x,X)=1$. 
 Hence, without the condition that  $\sup_{x \in X} \mathbf{q}^\boldnu(x,X)< 1$, even if there is strong survival in $A$ for the $\boldnu$-process, there are measures $\bm{\sigma}\gegerm\boldnu$ and $\boldmu\gegerm\boldnu$ such that there is strong survival in $A$ for the $\bm{\sigma}$-process and not for the $\boldmu$-process.

\end{exmp}

We close this section with an application of Theorem~\ref{th:germorder} to survival in a sequence of subsets.

\begin{cor}\label{cor:maxdispl}
	Let $\boldmu \gegerm \boldnu$ and consider a sequence $\{A_n\}_{n\ \in \mathbb{N}}$ of subsets of $X$.
		\begin{enumerate}
			\item If $x \in X$ and $\pr^x_\boldnu \big (\limsup_{n \to +\infty} \{\sum_{y \in A_n}\eta_n(y)>0\}\big )>0$,
 then $\pr^x_\boldmu \big (\limsup_{n \to +\infty} \{\sum_{y \in A_n}\eta_n(y)>0\}\big )>0$.
			\item If $\sup_{x \in X} \mathbf{q}^\boldnu(x, X)<1$ and $\pr^x_\boldnu \big (\liminf_{n \to +\infty} \{\sum_{y \in A_n}\eta_n(y)=0\}\big )=\mathbf{q}^\boldnu(x,X)$ for all $x \in X$,
 then $\pr^x_\boldmu\big (\liminf_{n \to +\infty} \{\sum_{y \in A_n}\eta_n(y)=0\}\big )=\mathbf{q}^\boldmu(x,X)$  for all $x \in X$.
			\item If $\sup_{x \in X} \mathbf{q}^\boldnu(x, X)<1$ and $\pr^x_\boldnu \big (\limsup_{n \to +\infty} \{\sum_{y \in A_n}\eta_n(y)>0\}|\mathcal{S}(X) \big )=1$  for all $x \in X$,
 then $\pr^x_\boldmu\big (\limsup_{n \to +\infty} \{\sum_{y \in A_n}\eta_n(y)>0\}|\mathcal{S}(X) \big )=1 \ \forall x \in X$.
		\end{enumerate}
\end{cor}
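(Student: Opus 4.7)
My plan is to encode the time-inhomogeneous sets $\{A_n\}$ as a single subset of an auxiliary \emph{space-time} BRW. Set $\tilde X:=X\times\N$ and define $\tilde\boldmu=\{\tilde\mu_{(x,n)}\}$ by the rule that a particle at $(x,n)$ draws $f\in S_X$ with law $\mu_x$ and places $f(y)$ children at $(y,n+1)$; define $\tilde\boldnu$ analogously, and put $\tilde A:=\bigcup_{n\in\N}A_n\times\{n\}$. The identity $G_{\tilde\boldmu}(\tilde{\mathbf z}|(x,n))=G_\boldmu(\tilde{\mathbf z}(\cdot,n+1)|x)$ shows at once that $\boldmu\gegerm\boldnu$ with parameter $\delta$ implies $\tilde\boldmu\gegerm\tilde\boldnu$ with the same $\delta$. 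Starting from $(x,0)$, all particles at time $m$ live on $X\times\{m\}$ with multiplicities equidistributed with the original $\eta_m$, so that $\mathbf q^{\tilde\boldmu}((x,n),\tilde X)=\mathbf q^\boldmu(x,X)$ for every $n$, and $\mathbf q^{\tilde\boldmu}((x,0),\tilde A)=\pr^x_\boldmu(\liminf_n\{\sum_{y\in A_n}\eta_n(y)=0\})$ (and analogously for $\boldnu$).

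Given this dictionary, part~(1) follows directly from Theorem~\ref{th:germorder}(2) applied to $\tilde\boldmu\gegerm\tilde\boldnu$ at the vertex $(x,0)$: its hypothesis $\mathbf q^{\tilde\boldnu}((x,0),\tilde A)<1$ gives the conclusion $\mathbf q^{\tilde\boldmu}((x,0),\tilde A)<1$. Parts~(2) and~(3) are complementary (via $\pr^x(\mathcal S(X))=1-\mathbf q^\boldmu(x,X)$), so it suffices to prove~(2); in the space-time language its hypothesis reads $\mathbf q^{\tilde\boldnu}((x,0),\tilde A)=\mathbf q^{\tilde\boldnu}((x,0),\tilde X)$ for every $x\in X$, and the goal is the analogous identity for $\tilde\boldmu$. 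The natural strategy is to apply Theorem~\ref{th:germorder}(3) to the space-time pair (its supremum hypothesis is automatic since $\sup_{(x,n)}\mathbf q^{\tilde\boldnu}((x,n),\tilde X)=\sup_x\mathbf q^\boldnu(x,X)<1$), but doing so requires the strong-survival identity at \emph{every} starting vertex of the state space, whereas our hypothesis only controls the time-$0$ slice $X\times\{0\}$. This gap is the main obstacle.

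To close it I will prove the following key lemma: writing $p_n^\boldnu(y):=\mathbf q^{\tilde\boldnu}((y,n),\tilde A)$, one has $p_n^\boldnu(y)=\mathbf q^\boldnu(y,X)$ on the dynamics-invariant forward-accessible subset $\tilde X_\star:=\{(y,n)\in\tilde X:\exists\,x\in X,\ x\stackrel n\to y\}$, which contains $X\times\{0\}$. The fixed-point identity $\mathbf q^{\tilde\boldnu}(\tilde A)=G_{\tilde\boldnu}(\mathbf q^{\tilde\boldnu}(\tilde A))$ reads slice-by-slice $p_n^\boldnu=G_\boldnu(p_{n+1}^\boldnu)$ and iterates to $p_0^\boldnu=G_\boldnu^{(n)}(p_n^\boldnu)$; combined with $p_0^\boldnu=\mathbf q^\boldnu(X)$ (hypothesis), the obvious bound $p_n^\boldnu\ge\mathbf q^\boldnu(X)$ (since $\mathcal E(X)\subseteq\liminf_m\{\sum_{y\in A_{n+m}}\eta_m(y)=0\}$), and the fixed-point identity $G_\boldnu^{(n)}(\mathbf q^\boldnu(X))=\mathbf q^\boldnu(X)$, this yields $\E^x[(p_n^\boldnu)^{\eta_n}]=\E^x[(\mathbf q^\boldnu(X))^{\eta_n}]$ for every $x$; the inequality $(p_n^\boldnu)^{\eta_n}\ge(\mathbf q^\boldnu(X))^{\eta_n}$ then promotes to $\pr^x$-a.s.\ equality, and a standard coordinate-wise analysis (using that $G_\boldnu^{(n)}(\cdot|x)$ is strictly increasing in the $y$-coordinate precisely when $x\stackrel n\to y$) produces $p_n^\boldnu(y)=\mathbf q^\boldnu(y,X)$ for every $(y,n)\in\tilde X_\star$. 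The restriction of the space-time BRW to $\tilde X_\star$ is itself a BRW reproducing the law of the process started from any $(x,0)$; on this restricted state space Theorem~\ref{th:germorder}(3) applies to transfer the identity to $\tilde\boldmu$, and evaluating at $v=(x,0)$ yields~(2), and hence~(3).
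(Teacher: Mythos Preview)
Your approach is the same as the paper's: pass to the space-time BRW on $X\times\N$, identify $\limsup_n\{\sum_{y\in A_n}\eta_n(y)>0\}$ with the survival event $\mathcal S^{st}(\tilde A)$ for $\tilde A=\bigcup_n A_n\times\{n\}$, observe that $\boldmu\gegerm\boldnu$ transfers to $\tilde\boldmu\gegerm\tilde\boldnu$, and invoke Theorem~\ref{th:germorder}. The paper's proof stops there, saying ``(1) and (2) follow from Theorem~\ref{th:germorder} applied to the space-time process.''

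You go further and flag a genuine subtlety that the paper's proof elides: Theorem~\ref{th:germorder}(3) requires the identity $\mathbf q^{\tilde\boldnu}(v,\tilde A)=\mathbf q^{\tilde\boldnu}(v,\tilde X)$ at \emph{every} vertex $v\in X\times\N$, whereas the hypothesis of Corollary~\ref{cor:maxdispl}(2) gives it only on the slice $X\times\{0\}$. Your fix---propagate the identity from $n=0$ to the forward-reachable set $\tilde X_\star$ via the slice recursion $p_n^\boldnu=G_\boldnu(p_{n+1}^\boldnu)$, then restrict the BRW to $\tilde X_\star$---is the right idea. Two technical points deserve care, though. First, the ``strict increase of $G_\boldnu^{(n)}(\cdot|x)$ in the $y$-coordinate when $x\stackrel n\to y$'' is not literally true at points where some coordinate of $\mathbf q^\boldnu(X)$ vanishes: if every configuration $h$ with $h(y)>0$ and $\pr^x_\boldnu(\eta_n=h)>0$ also has $h(y')>0$ for some $y'$ with $\mathbf q^\boldnu(y',X)=0$, then $(\mathbf q^\boldnu(X))^{\eta_n}=(p_n^\boldnu)^{\eta_n}=0$ gives no information about $p_n^\boldnu(y)$. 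One can rescue the argument (e.g.\ by handling the set $\{y:\mathbf q^\boldnu(y,X)=0\}$ separately, noting that from such $y$ the space-time process survives in $\tilde X$ a.s.\ and hence in $\tilde A$ a.s.\ by the original hypothesis and a conditioning argument), but as stated the step is incomplete. Second, $\tilde X_\star$ depends on which first-moment matrix you use; you should either take the union of the $\boldmu$- and $\boldnu$-reachable sets or argue that the restriction to the $\boldnu$-reachable set still carries the $\boldmu$-process started from $(x,0)$---which it need not, if the graphs differ. Taking the union and rerunning the key lemma on that larger set handles this.
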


\begin{exmp}\label{exmp:maximal}
	As an application of Corollary~\ref{cor:maxdispl} consider a metric $d$ on $X$; for instance, $d$ could be the natural metric induced by a connected graph structure on $X$. Fix $x_0 \in X$ and define the \textit{maximal and minimal displacements} as $M_n:=\ident_{\mathcal{S}(X)} \cdot \max \{d(x_0,y)\colon y \in X, \, \eta_n(y)>0\}$ $m_n:=\ident_{\mathcal{S}(X)} \cdot \min \{d(x_0,y)\colon y \in X, \, \eta_n(y)>0\}$. If $\boldmu \gegerm \boldnu$ then, given $\alpha >0$ and $f : \mathbb{N} \mapsto (0,+\infty$),
	\[
	\begin{split}
		\limsup_{n \to +\infty} M_n /f(n) \le \alpha,\, \pr^{x_0}_\boldmu\text{-a.s.~} &\Longrightarrow 
		\limsup_{n \to +\infty} M_n /f(n) \le \alpha, \, \pr^{x_0}_\boldnu\text{-a.s.} \\
		\liminf_{n \to +\infty} m_n /f(n) \ge \alpha,\, \pr^{x_0}_\boldmu\text{-a.s.~} &\Longrightarrow 
		\liminf_{n \to +\infty} m_n /f(n) \ge \alpha, \, \pr^{x_0}_\boldnu\text{-a.s.} \\
	\end{split}
	\]
	The details can be found in Section~\ref{sec:proofs}.	
\end{exmp}	

We observe that, in principle, the main results of this section can be extended to MBPs in varying environment; these are MBPs where $\boldmu=\{\mu_{x,n}\}_{x \in X, n \in \mathbb{N}}$ 
and the reproduction law of a particle at $x$ at time $n$ is $\mu_{x,n}$. 
Such processes admit a  space-time counterpart (as in the proof of Lemma~\ref{lem:germorder2}, see also \cite{cf:BRZ16}) which is a MBP in a fixed environment. Such an extension, however, goes beyond the purpose of this paper.


\section{Proofs}\label{sec:proofs}

\begin{proof}[Proof of Proposition~\ref{pro:Gtopology}]
	
	\leavevmode
	\begin{enumerate}
		\item It is easy: see \cite[Sections 2 and 3]{cf:BZ2} for the details.    
		\item It is enough to prove that $\mathbf{z} \mapsto G_\boldmu(\mathbf{z}|y)=\sum_{f \in S_X} \mu_y(f) \prod_{x \in X} \mathbf{z}(x)^{f(x)}$ is continuous with respect to the pointwise convergence topology for each $y \in Y$. To this aim, note that $\sup_{\mathbf{z} \in [0,1]^X} \big |\mu_y(f) \prod_{x \in X} \mathbf{z}(x)^{f(x)} \big |=\mu_y(f)$ and $\mathbf{z} \mapsto \prod_{x \in X} \mathbf{z}(x)^{f(x)}$ is continuous with respect to the pointwise convergence topology for every $f \in S_X$. Since $\sum_{f \in S_X} \mu_y(f)=1<+\infty$ then $\sum_{f \in S_X} \mu_y(f) \prod_{x \in X} \mathbf{z}(x)^{f(x)}$ converges to $G_\boldmu(\mathbf{z}|y)$ uniformly with respect to $\mathbf{z} \in [0,1]^X$, therefore $G_\boldmu(\cdot|y)$ is continuous.
		\item 
		Let us note that the tightness of $\{\rho_y\}_{y \in Y}$ means that for every $\varepsilon >0$ there exists $n=n(\varepsilon) \in \mathbb{N}$ such that $\mu_y(f\colon |f| \le n) > 1-\varepsilon $ for all $y \in Y$.
		We start by proving that $\big | \prod_{x \in X} \mathbf{z}(x)^{f(x)}-\prod_{x \in X} \mathbf{v}(x)^{f(x)} \big | \le \min(1, |f| \cdot \|\mathbf{z}-\mathbf{v}\|_\infty)$. The inequality $\big | \prod_{x \in X} \mathbf{z}(x)^{f(x)}-\prod_{x \in X} \mathbf{v}(x)^{f(x)} \big | \le 1$ follows trivially from the fact that $|t-s| \le \max(|t|,|s|)$ for $t,s \ge 0$. As for the second inequality, observe that if $t_i, s_i \in [0,1]$ for all $i=1, \ldots,n$ then
		\[
		\begin{split}
			\prod_{i=1}^n t_i-\prod_{i=1}^n s_i =&
			\prod_{i=1}^n t_i-s_n \prod_{i=1}^{n-1} t_i+ s_n \prod_{i=1}^{n-1} t_i+ \cdots +\prod_{i=1}^{j} t_i\prod_{i=j+1}^{n} s_i - \prod_{i=1}^{j-1} t_i\prod_{i=j}^{n} s_i\\ &+ \cdots + t_1 \prod_{i=2}^n s_i -  \prod_{i=1}^n s_i
		\end{split}
		\]
		whence
		\[
		\begin{split}
			\Big |\prod_{i=1}^n t_i-\prod_{i=1}^n s_i\Big | \le &
			\Big |\prod_{i=1}^{n-1} t_i\Big | \cdot |t_n-s_n|+ \cdots +
			\Big |\prod_{i=1}^{j-1} t_i\prod_{i=j+1}^{n} s_i|\cdot |t_j-s_j|\\ &+ \cdots + \Big |\prod_{i=2}^n s_i\Big |\cdot |t_1-s_1| \le n \max_{i=1, \ldots, n} |t_i-s_i|.
		\end{split}
		\]
		Let us fix $\varepsilon >0$ and let $n=n(\varepsilon/2)$ (coming from the tightness). For every $\mathbf{z}, \mathbf{v}$ such that $\|\mathbf{z}-\mathbf{v}\|_\infty \le \delta:=\varepsilon/(2n)$ and for every $y \in Y$ we have
		\[
		\begin{split}
			\big |G_\boldmu(\mathbf{z}|y)-G_\boldmu(\mathbf{v}|y) \big | &\le 
			\sum_{f \in S_X\colon |f| \le n} \mu_y(f) \Big |\prod_{x \in X}  \mathbf{z}(x)^{f(x)} - \prod_{x \in X}\mathbf{v}(x)^{f(x)} \Big |\\&+
			\sum_{f \in S_X\colon |f| >n} \mu_y(f) \Big |\prod_{x \in X}  \mathbf{z}(x)^{f(x)} - \prod_{x \in X}\mathbf{v}(x)^{f(x)} \Big |\\
			&\le \sum_{f \in S_X\colon |f| \le n}  \mu_y(f) |f|\cdot \|\mathbf{z}-\mathbf{v}\|_\infty+ \sum_{f \in S_X\colon |f| >n} \mu_y(f)
			\\
			& \le n\frac{\varepsilon}{2n}+\frac{\varepsilon}{2}=\varepsilon.
		\end{split}
		\]
		Hence $\|\mathbf{z}-\mathbf{v}\|_\infty \le \delta$ implies $\big \|G_\boldmu(\mathbf{z})-G_\boldmu(\mathbf{v}) \big \|_\infty \le \varepsilon$.
	\end{enumerate}
\end{proof}

We prove now that the binary relation $\gegerm$ is a partial order on the space of all generating functions from $[0,1]^X$ to $[0,1]^Y$. To this aim we need a lemma.	

\begin{lem}\label{lem:uniqueness}
	Let  $G(\mathbf{z})$ be a holomorphic function defined on $D^n$ where $D$ is the closed unit ball in $\mathbb C$.
	Suppose that $G$ vanishes on $[\delta,1]^n$ for some $0\le\delta<1$. Then $G$ vanishes on $D^n$.
\end{lem}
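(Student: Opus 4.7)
The plan is to prove this by induction on $n$, using the classical identity theorem from one-variable complex analysis at each step. The key observation is that although $[\delta,1]$ is a set on the real boundary line of the unit disk in a sense, the fact that $\delta<1$ means $[\delta,1]$ contains an interior accumulation point of $D$ (every point of $[\delta,1)$ lies in the open disk), so the identity theorem applies.

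For the base case $n=1$, I would invoke the identity theorem (for instance \cite[Theorem 10.18]{cf:Rudin} cited earlier in the paper): $G$ is holomorphic on a neighborhood of $D$ and vanishes on $[\delta,1]$, a set with an accumulation point in the open unit disk, hence $G\equiv 0$ on $D$.

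For the inductive step, suppose the statement holds for $n-1$, and let $G$ be holomorphic on $D^n$ vanishing on $[\delta,1]^n$. First I would fix an arbitrary $(z_2,\ldots,z_n)\in[\delta,1]^{n-1}$ and consider the slice $g(w):=G(w,z_2,\ldots,z_n)$, which is holomorphic on $D$ and vanishes on $[\delta,1]$; the base case gives $g\equiv 0$ on $D$. Therefore $G(w,z_2,\ldots,z_n)=0$ for every $w\in D$ and every $(z_2,\ldots,z_n)\in[\delta,1]^{n-1}$. Next I would fix $w\in D$ arbitrarily and consider the function $h(z_2,\ldots,z_n):=G(w,z_2,\ldots,z_n)$, which is holomorphic on $D^{n-1}$ and, by what we just proved, vanishes on $[\delta,1]^{n-1}$. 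The inductive hypothesis then yields $h\equiv 0$ on $D^{n-1}$, i.e., $G(w,z_2,\ldots,z_n)=0$ for all $(w,z_2,\ldots,z_n)\in D^n$.

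There is no real obstacle here; the argument is entirely routine. The only point that deserves a brief mention is the role of the hypothesis $\delta<1$: this ensures $[\delta,1)\subseteq D^\circ$ has an accumulation point in the open disk, which is what the one-variable identity theorem needs. Without $\delta<1$, the set $\{1\}$ would lie on the boundary and one could not conclude.
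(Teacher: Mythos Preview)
Your proof is correct and follows essentially the same approach as the paper: both arguments slice one variable at a time and apply the one-variable identity theorem \cite[Theorem 10.18]{cf:Rudin}, then induct. The paper writes out the $n=2$ case explicitly before invoking induction, while you organize the whole thing as a formal induction from the $n=1$ base case, but the substance is identical.
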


\begin{proof}
	We first prove the statement for $n=2$. 
	Let $\mathbf{z}=\big (\mathbf{z}(1),\mathbf{z}(2) \big )$ where $\mathbf{z}(1)\in [\delta,1]$ (meaning that the imaginary part of $\mathbf{z}(1)$ is 0 and the real part belongs to the interval) and let $G_{\mathbf{z}(1)}(w):=G(\mathbf{z}(1),w)$
	for all $w\in D$. By hypothesis, $G_{\mathbf{z}(1)}$ is a holomorphic function of one complex variable which vanishes on the
	real interval $[\delta,1]$. Since this interval has at least one limit point in $D$, then it is well-known that  $G_{\mathbf{z}(1)}(w)=0$
	for all $w\in D$ (see for instance \cite[Theorem 10.18]{cf:Rudin}). This proves that $G$ vanishes on $[\delta,1] \times D$.
	
	Now fix $s\in D$ and define $G_s(\xi):=G(\xi,s)$ for all $\xi\in D$. This is again a holomorphic function of one complex variable which vanishes on the real interval $[\delta,1]$. 
	By the same argument as before, $G_s(\xi)=0$ for all $\xi\in D$.
	This means that $G(\mathbf{z})$ vanishes in $D^2$. The statement for a general $n>2$ follows easily by induction.
\end{proof}

\begin{proof}[Proof of Proposition~\ref{pro:germordertrueorder}]
	The relation $\gegerm$ is clearly reflexive and transitive. Let us prove it is antisymmetric. Suppose that $\boldmu \gegerm \boldnu$ and $\boldnu \gegerm \boldmu$, that is, there exists $\delta<1$ such that for all $\mathbf{z} \in [\delta,1]^X$, then $G_\boldmu(\mathbf{z})=G_\boldnu(\mathbf{z})$; we prove that $\boldmu=\boldnu$ (this is equivalent to $G_\boldmu=G_\boldnu$ as discussed in Section~\ref{subsec:genfun}).
	It is enough to prove that $\mu_y=\nu_y$ (or equivalently that $G_\boldmu(\cdot|y)=G_\boldnu(\cdot|y)$) for every fixed $y \in Y$.
	
	To this aim, note that equation~\eqref{eq:genfun} defines a continuous function $G$ on $D^X$ where $D:=\{z \in \mathbb{C} \colon |z| \le 1\}$ is the closed disk of radius 1 in the complex plane.
	Whence when $X$ is finite, for every fixed $y \in X$, the generating function $G(\cdot|y)$ can be seen as a holomorphic function of several variables.
	In this case the result follows from Lemma~\ref{lem:uniqueness}; indeed since $G_\boldmu(\cdot|x)-G_\boldnu(\cdot|x)$ vanishes on $[\delta,1]^X$, then, by Lemma~\ref{lem:uniqueness}, it vanishes on $D^X$ whence $\mu_y(f)-\nu_y(f)=0$ for every $f \in S_X$.
	
	Now let $X$ be infinite; given a subset $W \subseteq X$ 	define $V(W):=\{\mathbf{z} \in [0,1]^X \colon \mathbf{z}(x)=1, \, \forall x \in X \setminus W\}$ and let $\pi:V(W) \mapsto [0,1]^W$ be the bijective map defined 
	as $\pi(\mathbf{z}):=\mathbf{z}|_W$ (the restriction of $\mathbf{z}$ to $W$). Given $f \in S_X$ define $ \langle f \rangle _W:=\{g \in S_X \colon g|_W=f|_W\}$ the set of functions extending the restriction of $f$ to $W$; moreover define $S_X(W):=\{f \in S_X\colon \{f>0\} \subseteq W\}\}$ 
	the set of finitely supported functions on $X$ whose support is in $W$. Clearly, since $x \to f(x) \ident{(x \in W)}$ is a map in $S_X(W)$ and $ \langle f \rangle _W= \langle f(\cdot)\ident(\cdot \in W) \rangle _W$, then the map $f \mapsto  \langle f \rangle _W$ is a bijection 
	from $S_X(W)$ onto $\{ \langle f \rangle _W \colon f \in S_X\}$. 
	Roughly speaking, $ \langle f \rangle _W$ are equivalence classes containing exactly one function $g \in S_X(W)$ and since every $g \in S_X(W)$ belongs to a class, there is a one to one correspondence between $S_X(W)$ and $\{ \langle f \rangle _W \colon f \in S_X\}$.
	
	We observe now that $G_\boldmu(\cdot|x)|_{V(W)}$ and $G_\boldnu(\cdot|x))|_{V(W)}$ can be seen as functions defined on $[0,1]^W$, indeed 
	$G_\boldmu(\pi^{-1}(\cdot)|y)|_{V(W)}=G_\boldmu(\pi^{-1}(\cdot)|y)$ and $\pi^{-1}$ is a bijection from $[0,1]^W$ onto $V(W)$ (and the same holds for $\boldnu$). More precisely
	\[
	G_\boldmu(\pi^{-1}(\mathbf{z})|y)=
	\sum_{f \in S_X(W)} \mu_x( \langle f \rangle _W)\prod_{w \in W} \mathbf{z}(w)^{f(w)}, 
	\] 
	and an analogous expression holds for $G_\boldnu$.
	Suppose that $W$ is finite; since $G_\boldmu(\pi^{-1}(\cdot)|y)=G_\boldnu(\pi^{-1}(\cdot)|y)$ on $[\delta,1]^W$ the same equality holds on $V(W)$ (by Lemma~\ref{lem:uniqueness}). This implies easily that $\mu_y( \langle f \rangle _W)=\nu_y( \langle f \rangle _W)$ for every $f \in S_X(W)$ or, equivalently, for every $f \in S_X$. Consider now a fixed sequence of finite subsets of $X$, say $\{W_n\}_{n \in \mathbb{N}}$, such that $W_n \subseteq W_{n+1}$ and $\bigcup_{n \in \mathbb{N}} W_n=X$.
	Then, for all $f \in S_X$ we have $ \langle f \rangle _{W_{n+1}}\subseteq  \langle f \rangle _{W_n}$ and $\bigcap_{n \in \mathbb{N}}( \langle f \rangle _{W_n})=\{f\}$, 
	therefore 
	\[
	\mu_y(f)=\lim_{n \to +\infty} \mu_y( \langle f \rangle _{W_n})=\lim_{n \to +\infty} \nu_y( \langle f \rangle _{W_n})=\nu_y(f).
	\]
\end{proof}

\begin{proof}[Proof of Proposition~\ref{pro:germindepdiff}]
	\noindent $(1) \Rightarrow (2)$. 
	Using the hypothesis and the expression for $\phi_y$, we get that
	for every $t \in [\delta,1]$ and for all $x \in X$, since $t \mathbf{1} \in [\delta,1]^X$ then
	\[
	\phi_y^\boldmu(t)
	=G_\boldmu(t \mathbf{1}|y) \le G_\boldnu(t \mathbf{1}|y)
	=\phi_x^\boldnu(t).
	\]
	
	\noindent $(2) \Rightarrow (1)$. 
	Recall that, for multinomial families, the generating functions are
	$G_\boldmu(\mathbf{z}|y)=\phi_y^\boldmu(P\mathbf{z}(y))$ and
	$G_\boldnu(\mathbf{z}|y)=\phi_y^\boldnu(P\mathbf{z}(y))$.
	We observe that the map $\mathbf{z} \mapsto P\mathbf{z}$
	is nondecreasing and continuous from $[0,1]^X$ into $[0,1]^Y$; in
	particular, if $\mathbf{z} \in [\delta,1]^X$ for some $\delta<1$, then $P\mathbf{z} \in [\delta,1]^Y$. Indeed
	$P\, t \mathbf{1}=t \mathbf{1}$ therefore $\delta \mathbf{1}=P\, \delta \mathbf{1} \le P \mathbf{z} \le P \mathbf{1}=\mathbf{1}$.
	Take $\mathbf{z} \in [\delta,1]^X$; then for all $y \in Y$
	\[
	G_\boldmu(\mathbf{z}|y)=\phi_y^\boldmu(P\mathbf{z}(y)) \le
	\phi_y^\boldnu(P\mathbf{z}(y))=
	G_\boldnu(\mathbf{z}|y)
	\]
	where we used the inequality $\phi_y^\boldmu(t) \le \phi_y^\boldnu(t)$ for $t=P\mathbf{z}(y) \in [\delta,1]$ (due to the monotonicity of $P$).
\end{proof}

Since Lemma~\ref{lem:martingale} and Proposition~\ref{pro:martingale} hold for every initial condition $\eta$, in order to avoid a cumbersome notation, in the proofs we use $\pr$ and $\E$ instead of $\pr^\eta$ and $\E^\eta$.

\begin{proof}[Proof of Lemma~\ref{lem:martingale}]
	Let $k=1$. We write the explicit expression of $\eta_{m+1}$ as a function of $\eta_m$ and identify $\eta_m(\omega)$ with a function 
	$h\in S_X$. Then
	\begin{equation}\label{eq:z-eta}
		\begin{split}
			\E[\mathbf z^{\eta_{m+1}}|\mathcal F_m] & =\E\Big[\prod_{x\in X}\mathbf z(x)^{\sum_{y\in X}\sum_{i=1}^{\eta_m(y)}f_{i,m,y}(x)}|\mathcal F_m\Big]\\
			&
			= \sum_{h\in S_X} \ident{(\eta_m=h)}
			\E\Big[\prod_{x\in X}\mathbf z(x)^{\sum_{y\in X}\sum_{i=1}^{h(y)}f_{i,m,y}(x)}|\mathcal F_m\Big],\ \pr\textrm{-a.s.}
		\end{split}
	\end{equation}
	where in the last equality we used the fact that $\eta_m$ is $\mathcal F_m$-measurable.
	Using indepedence of $f_{i,m,y}$ and $\mathcal F_m$, we get
	\[
	\E\Big[\prod_{x\in X}\mathbf z(x)^{\sum_{y\in X}\sum_{i=1}^{h(y)}f_{i,m,y}(x)}|\mathcal F_m\Big]
	=\E\Big[\prod_{x\in X}\prod_{y\in X}\prod_{i=1}^{h(y)}\mathbf z(x)^{f_{i,m,y}(x)}\Big],\ \pr\textrm{-a.s.}
	\]
	Now, since  $\{f_{i,m,y}(x)\}_{i,m \in \mathbb{N}, y \in X}$ is a family of independent random variables, this expectation can be written as
	(by definition of $G$)
	\[
	\prod_{y\in X}\prod_{i=1}^{h(y)}\E\Big[\prod_{x\in X}\mathbf z(x)^{f_{i,m,y}(x)}\Big]=
	\prod_{y\in X}\prod_{i=1}^{h(y)}
	G(\mathbf z|y).
	\]
	Thus \eqref{eq:z-eta} becomes
	\begin{equation}\label{eq:z-eta2}
		\begin{split}
			\E[\mathbf z^{\eta_{m+1}}|\mathcal F_m] & 
			= \sum_{h\in S_X} \ident{(\eta_m=h)}
			\prod_{y\in X}\prod_{i=1}^{h(y)}
			G(\mathbf z|y) = \sum_{h\in S_X} \ident{(\eta_m=h)}
			\prod_{y\in X}
			G(\mathbf z|y)^{h(y)}\\
			&=\sum_{h\in S_X} \ident{(\eta_m=h)}
			\prod_{y\in X}
			G(\mathbf z|y)^{\eta_m(y)}=G(\mathbf z)^{\eta_m}, \ \pr\textrm{-a.s.}
		\end{split}
	\end{equation}
	which proves the claim for $k=1$.
	
	The claim is proven by induction on $k$. Indeed
	\[
	\begin{split}
		\E[\mathbf z^{\eta_{m+k}}|\mathcal F_m] & 
		= \E\Big[ \E\big[\mathbf z^{\eta_{m+k}}|\mathcal F_{m+k-1}\big] |\mathcal F_m\Big]
		=  \E\Big[ G(\mathbf z)^{\eta_{m+k-1}} |\mathcal F_m\Big] \\
		&
		=  \left(G^{(k-1)}(G(\mathbf z))\right)^{\eta_{m}}=(G^{(k)}(\mathbf z))^{\eta_m}, \ \pr\textrm{-a.s.}
	\end{split}
	\]
	where in the last line we used the induction hypothesis and the definition of $G^{(k)}$.
\end{proof}

\begin{proof}[Proof of Proposition~\ref{pro:martingale}]
	The two inequalities come from Lemma~\ref{lem:martingale} and they hold for every initial state of the process; in particular if $\mathbf{z} \in F_G$, 
	then $\{\mathbf{z}^{\eta_n}\}_{n \in \mathbb{N}}$ is a martingale.
	
	Note that $\{\mathbf{z}^{\eta_n}\}_{n \in \mathbb{N}}$ is uniformly bounded by the constant function 1, whence it is a uniformly integrable family. It is well-known that a supermartigale or a submartingale bounded in $L^1(\pr)$ converges a.s., whence
%
$\mathbf{z}^{\eta_n} \to W_\mathbf{z}
\pr
\text{-a.s.}~\text{and in }L^p(\pr
)
$,
where the $L^p(\pr
	)$ convergence comes from the a.s.~convergence and the Bounded Convergence Theorem. The $L^1(\pr
	)$-convergence and the fact that $\E
	[\mathbf{z}^{\eta_m} | \mathcal{F}_n] \ge \mathbf{z}^{\eta_n}$ (resp.~$\E
	[\mathbf{z}^{\eta_m} | \mathcal{F}_n] \le \mathbf{z}^{\eta_n}$) for all $m \ge n$
	implies
	$\E
	[W_\mathbf{z} | \mathcal{F}_n]\ge \mathbf{z}^{\eta_n}$ (resp.~$\E
	[W_\mathbf{z} | \mathcal{F}_n]\le\mathbf{z}^{\eta_n}$).	
\end{proof}

\begin{proof}[Proof of Corollary~\ref{cor:martingale}]
	Note that $\E^\eta[\ident_{\mathcal{E}(A)}| \mathcal{F}_n]=\mathbf{q}(A)^{\eta_n}$.
	Indeed, it is enough to note that, for every sequence $\{f_i\}_{i=1}^n$ in $S_X$, the Markov property implies
	\[
	\pr^\eta(\mathcal{E}(A)|\eta_1=f_1, \ldots, \eta_n=f_n)=\pr^x(\mathcal{E}(A)|\eta_n=f_n)=
	\mathbf{q}(A)^{f_n}.
	\]
	By Proposition~\ref{pro:martingale}, $\{\mathbf{q}(A)^{\eta_n}\}_{n \in \mathbb{N}}$
	is a martingale and, by \cite[Theorem 14.2]{cf:Williams} (or \cite[Theorem 9.4.8]{cf:Chung}, since $ \ident_{\mathcal{E}(A)} \in L^p(\pr^x)$, then
	$\E^\eta[\ident_{\mathcal{E}(A)}| \mathcal{F}_n] \to \E^\eta[\ident_{\mathcal{E}(A)}| \mathcal{F}_\infty]=\ident_{\mathcal{E}(A)}$,
	$\pr^\eta$-a.s.~and in $L^p(\pr^\eta)$
	for all $p \ge 1$.
\end{proof}

 \begin{proof}[Proof of Corollary~\ref{cor:martingale2}]
	The equivalence between (2) and (3) follows from the identity
	$\pr^\eta=\ast_{x \in X} \ast_{i=1}^{\eta(x)} \pr^x$ where $\ast$ is the usual convolution product of measures. Let us see the details.
	
	\noindent $(2) \Rightarrow (3)$. There is nothing to prove.
	
	\noindent $(3) \Rightarrow (2)$. Consider, on a suitable probability space, a family
	$\{\{\eta_n^{i,x}\}_n\}_{i \in \mathbb{N}, x \in X}$ of independent MBPs such that $\eta_0^{i,x}=\delta_x$. By the superimposition property (or by equation~\eqref{eq:evolBRW}) we have that $\eta_n:=\sum_{x \in X} \sum_{i=1}^{\eta(x)} \eta_n^{i,x}$ is a MBP with initial condition $\eta$. Whence the law of $\{\eta_n^{i,x}\}_n$ is $\pr^x$ and the law of $\{\eta_n\}_{n}$ is $\pr^\eta$. Clearly $\mathbf{z}^{\eta_n}= \prod_{x \in X} \prod_{i=1}^{\eta(x)} \mathbf{z}^{\eta_n^{i,x}}$.
	Since $\mathbf{z}^{\eta_n^{i,x}} \to W^{i,x}_\mathbf{z}$ a.s., then $\mathbf{z}^{\eta_n} \to \prod_{x \in X} \prod_{i=1}^{\eta(x)} W^{i,x}_\mathbf{z}$ a.s. A similar argument holds for $\mathbf{v}$ instead of $\mathbf{z}$.
	By hypothesis $W^{i,x}_\mathbf{z} \ge W^{i,x}_\mathbf{v}$ a.s.~whence $\prod_{x \in X} \prod_{i=1}^{\eta(x)} W^{i,x}_\mathbf{z} \ge \prod_{x \in X} \prod_{i=1}^{\eta(x)} W^{i,x}_\mathbf{z}$ a.s.
	
	\noindent $(1) \Rightarrow (2)$. 
	Suppose that $\mathbf{z} \ge  \mathbf{v}$, then $\mathbf{z}^{\eta_n} \ge  \mathbf{v}^{\eta_n}$ $\pr^\eta$-a.s.~thus by taking the limit as $n \to +\infty$,  Proposition~\ref{pro:martingale} yields  $\pr^\eta(W_\mathbf{z} \ge
	W_\mathbf{v})=1$.
	
	\noindent $(2) \Rightarrow (1)$.
	Finally, suppose that $\pr^x(W_\mathbf{z} \ge
	W_\mathbf{v})=1$ for every $x \in X$. Then $\mathbf{z}(x)=\E^x[W_\mathbf{z}] \ge \E^x[W_\mathbf{v}] = \mathbf{v}(x)$ for all $x \in X$.
\end{proof}

In order to prove Theorem~\ref{th:moyal2}(2), we need Lemma~\ref{lem:martingale2},
thus we proceed with its proof first.\\

	\begin{proof}[Proof of Lemma~\ref{lem:martingale2}]
		Let $\inf_{x \in X} \mathbf{q}(x, A)=:\alpha>0$. If $\alpha=1$, then there is nothing to prove, since $\pr^\eta(\mathcal{S}(A))=0$.
		If $\alpha<1$, from Corollary~\ref{cor:martingale} we have that, $\pr^\eta$-a.s.~on $\mathcal{S}(A)$, 
		\[
		0=\lim_{n \to +\infty} \mathbf{q}(A)^{\eta_n} \ge \lim_{n \to +\infty}\alpha^{\sum_{x \in X} \eta_n(x)}.
		\]
		Thus $\pr^\eta(\{\lim_{n \to +\infty}\alpha^{\sum_{x \in X} \eta_n(x)}=0\}\cap \mathcal{S}(A))=\pr^\eta(\mathcal{S}(A))$,
		which implies the claim.
	\end{proof}

\begin{proof}[Proof of Theorem~\ref{th:moyal2}]\leavevmode
	\begin{enumerate}
	\item 	The statement is 
	\cite[Corollary 4.2]{cf:BBHZ}.
	\item Assume now that $\inf_{x \in X} \mathbf{q}(x,X) >0$. By hypothesis, $\mathbf z(x)\ge\mathbf q(x,X)$ for all $x\in X$ and there exists  $x_0$ such that $\mathbf z(x_0)>\mathbf q(x_0,X)$.
	Suppose by contradiction that $\mathbf z(x)\le 1-\varepsilon$ for all $x\in X$, for some $\varepsilon >0$.
	Let $W_\mathbf{z}:=\lim_{n\to+\infty} \mathbf z^{\eta_n}$. 
	On $\mathcal E(X)$ we have $W_\mathbf{z}=1$ 
	(see discussion after Proposition \ref{pro:martingale}).
	By Lemma \ref{lem:martingale2}, on $\mathcal S(X)$, $W_\mathbf{z}=\lim_{n\to+\infty} \mathbf z^{\eta_n}\le\lim_{n\to+\infty}(1-\varepsilon)^{\sum_{x\in X}\eta_n(x)}=0$,
	$\pr^{x_0}$-a.s. Whence $W_\mathbf{z}=\ident_\mathcal E(X)$, $\pr^{x_0}$-a.s.
	Thus 
	\[
	\mathbf q(x_0,X)<\mathbf z(x_0) \le \E^{x_0}[W_\mathbf{z}]=\E^{x_0}[\ident_\mathcal E(X)]=\mathbf q(x_0,X)
	\]
	which is a contradiction.
		
	\end{enumerate}
	
\end{proof}

Define $L(A):=\sum_{x \in A, n \in \N} \eta_n(x)$ the total number of visits in $A$; clearly, $\mathbf{q}(x,A)=\pr^x(L(A)<+\infty)$ for all $x \in X$. Moreover let $L_n(A):=\sum_{x \in A, i \le n} \eta_i(x)$ be the  number of visits in $A$ before time $n$; clearly $L_n(A) \uparrow L(A)$ as $n \to +\infty$.
Before proving Theorem~\ref{th:germorder}, as a warm-up, we prove Theorem~\ref{th:pgforder}; to this aim we need a preparatory lemma.

\begin{lem}\label{lem:pgforder}
	Let $\boldmu \gepgf \boldnu$ and  $A \subseteq X$. Then $\E_\boldmu^x[\exp(-tL(A))] \le
	\E_\boldnu^x[\exp(-tL(A))]$ for all $t \ge 0$, $x \in X$.
\end{lem}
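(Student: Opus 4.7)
The plan is to compare Laplace transforms of the truncated local times $L_n(A) := \sum_{i \le n, y \in A} \eta_i(y)$ via a recursion on $n$, then pass to the limit by monotone convergence.

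First, I would fix $t\ge 0$ and $x \in X$ and choose the vector $\mathbf{z} \in [0,1]^X$ given by $\mathbf{z}(y) = e^{-t}$ for $y \in A$ and $\mathbf{z}(y)=1$ for $y \in X \setminus A$. Then a direct computation gives
\[
\prod_{i=0}^n \mathbf{z}^{\eta_i} = \prod_{y \in X} \mathbf{z}(y)^{\sum_{i=0}^n \eta_i(y)} = e^{-t L_n(A)}.
\]
So the quantity of interest is $H_n(\mathbf{z}|x) := \E^{\delta_x}\!\left[\prod_{i=0}^n \mathbf{z}^{\eta_i}\right]$, evaluated at this particular $\mathbf{z}$.

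Next I would derive the recursion $H_n(\mathbf{z}|x) = \mathbf{z}(x)\, G(H_{n-1}(\mathbf{z})|x)$ for every $\mathbf{z} \in [0,1]^X$. Indeed, $\mathbf{z}^{\eta_0} = \mathbf{z}(x)$ a.s.\ under $\pr^x$, and by the Markov + branching property, conditionally on $\eta_1 = h$ the shifted process $(\eta_{1+j})_{j \ge 0}$ has law $\pr^h$, which factors as a product of independent copies $\pr^y$ of the particles of $h$. Thus
\[
\E^x\!\left[\prod_{i=1}^n \mathbf{z}^{\eta_i}\,\Big|\,\eta_1\right] = \prod_{y \in X} H_{n-1}(\mathbf{z}|y)^{\eta_1(y)} = H_{n-1}(\mathbf{z})^{\eta_1},
\]
and applying Lemma~\ref{lem:martingale} (with $m=0$, $k=1$, starting state $\delta_x$, and the vector $H_{n-1}(\mathbf{z})$ in place of $\mathbf{z}$) yields $\E^x[H_{n-1}(\mathbf{z})^{\eta_1}] = G(H_{n-1}(\mathbf{z})|x)$, giving the claimed recursion. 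The base case is $H_0(\mathbf{z}|x)=\mathbf{z}(x)$.

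Having this in hand for both $\boldmu$ and $\boldnu$, I would show by induction on $n$ that $H_n^\boldmu(\mathbf{z}) \le H_n^\boldnu(\mathbf{z})$ pointwise on $X$, for every $\mathbf{z} \in [0,1]^X$. The base case is equality. For the inductive step, combine (i) the monotonicity of $G_\boldmu$ on $[0,1]^X$ with the inductive hypothesis $H_{n-1}^\boldmu(\mathbf{z}) \le H_{n-1}^\boldnu(\mathbf{z})$, and (ii) the pgf-order assumption $G_\boldmu(\cdot) \le G_\boldnu(\cdot)$ evaluated at $H_{n-1}^\boldnu(\mathbf{z})$:
\[
H_n^\boldmu(\mathbf{z}|x) = \mathbf{z}(x)\, G_\boldmu(H_{n-1}^\boldmu(\mathbf{z})|x) \le \mathbf{z}(x)\, G_\boldmu(H_{n-1}^\boldnu(\mathbf{z})|x) \le \mathbf{z}(x)\, G_\boldnu(H_{n-1}^\boldnu(\mathbf{z})|x) = H_n^\boldnu(\mathbf{z}|x).
\]

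Finally, I would specialize to the $\mathbf{z}$ chosen in the first step, so $H_n^\boldmu(\mathbf{z}|x) = \E^x_\boldmu[e^{-tL_n(A)}]$ and analogously for $\boldnu$. Since $L_n(A) \uparrow L(A)$ (with the convention $e^{-\infty}=0$), the sequence $e^{-tL_n(A)}$ is bounded in $[0,1]$ and converges pointwise to $e^{-tL(A)}$, so bounded/monotone convergence gives $\E^x_\boldmu[e^{-tL(A)}] \le \E^x_\boldnu[e^{-tL(A)}]$, as required. There is no serious obstacle here: the only delicate point is correctly invoking the branching property to factor the conditional expectation over $\eta_1$, which justifies the recursion driving the induction.
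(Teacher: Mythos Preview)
Your proof is correct and follows essentially the same route as the paper: an induction on $n$ comparing $\E_\boldmu^x[e^{-tL_n(A)}]$ and $\E_\boldnu^x[e^{-tL_n(A)}]$ via the recursion $H_n(\mathbf{z}|x)=\mathbf{z}(x)\,G(H_{n-1}(\mathbf{z})|x)$ (the paper writes the same identity with $\mathbf{z}(x)=e^{-t}\ident_A(x)+\ident_{A^\complement}(x)$ directly), using monotonicity of $G_\boldmu$ plus the pgf-order, then passing to the limit by bounded convergence. Your only cosmetic difference is carrying a general $\mathbf{z}\in[0,1]^X$ through the induction and invoking Lemma~\ref{lem:martingale} for the one-step expectation, whereas the paper computes that step explicitly.
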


\begin{proof}
	We prove by induction on $n$ that
	$\E_\boldmu^x[\exp(-tL_n(A))] \le
	\E_\boldnu^x[\exp(-tL_n(A))]$ for all $t \ge 0$, $x \in X$. the claim follows from the Bounded Convergence Theorem.
	
	If $n=0$, then, for all $x \in X$ and $t \in [0,+\infty)$, 
	$\E_\boldmu^x[\exp(-tL_0(A))]= \exp(-t) \ident_A(x)+\ident_{A^\complement}(x)=
	\E_\boldnu^x[\exp(-tL_0(A))]$.
	
	Let $n \ge 0$ and suppose that $\E_\boldmu^x[\exp(-tL_{n}(A))] \le
	\E_\boldnu^x[\exp(-tL_{n}(A))]$ for all $t \ge 0$, $x \in X$.
	We have
	\[
	\begin{split}
		\E_\boldmu^x[\exp(-tL_{n+1}(A))] &= (
		\exp(-t) \ident_A(x)+\ident_{A^\complement}(x))
		\sum_{f \in S_X} \mu_x(f) \prod_{y \in X} \E^y_\boldmu[\exp(-tL_n(A))]^{f(y)}\\
		&=(
		\exp(-t) \ident_A(x)+\ident_{A^\complement}(x))G_{\boldmu}\big (\E_\boldmu^{(\cdot)}[\exp(-tL_n(A))] \big |x \big )\\
		_\textrm{\tiny{(induction) }}&\le (
		\exp(-t) \ident_A(x)+\ident_{A^\complement}(x))
		G_{\boldmu}\big (\E_\boldnu^{(\cdot)}[\exp(-tL_n(A))] \big |x \big )\\
		_\textrm{\tiny{(pgf order) }}&\le (
		\exp(-t) \ident_A(x)+\ident_{A^\complement}(x))G_{\boldnu}\big (\E_\boldnu^{(\cdot)}[\exp(-tL_n(A))] \big |x \big )\\
		&=\E_\boldnu^x[\exp(-tL_{n+1}(A))].
	\end{split}
	\]
	
\end{proof}

\begin{proof}[Proof of Theorem~\ref{th:pgforder}]\leavevmode
	
	\begin{enumerate}
		\item 
		By the Bounded Convergence Theorem and Lemma~\ref{lem:pgforder}, for all $x \in X$ and $A \subseteq X$ we have
		\[
		\begin{split}
			\mathbf{q}^\boldmu(x,A)&=\pr^x_\boldmu (L(A)<+\infty)=
			\lim_{t\to 0^+} \E_\boldmu^x[\exp(-tL(A))] \\
			&\le \lim_{t\to 0^+}
			\E_\boldnu^x[\exp(-tL(A))] =
			\pr^x_\boldnu (L(A)<+\infty)=\mathbf{q}^\boldnu(x,A).
		\end{split}
		\]
		\item We know from (1) and from the hypotheses that, 
		\[
		\mathbf{q}^\boldmu(A)
		\le \mathbf{q}^\boldnu(A) =
		\mathbf{q}^\boldnu(X).
		\]
		Then
		$\sup_{x \in X} \mathbf{q}^\boldmu(x,A)<1$ which, according to Theorem~\ref{th:moyal2}  
		, implies
		$\mathbf{q}^\boldmu(A) =
		\mathbf{q}^\boldmu(X)$.
	\end{enumerate}
\end{proof}

We can prove now Theorem~\ref{th:germorder}. We need two preparatory lemmas. The first one is the analogous of \cite[Lemma 2.3]{cf:Hut2022} and the proof is on the same line.  As usual, $\vee$ and $\wedge$ denote the maximum and the minimum respectively.

\begin{lem}\label{lem:germorder}
	Let $\boldmu \gegerm \boldnu$ and  $A \subseteq X$. If $\delta<1$ is the same as in the definition of $\gegerm$, then for all $t \in [\delta, 1]$ and all $x \in X$,
	\[
	\E_\boldnu^x[t^{\ident{(L(A)>0)}}] \ge t \vee \E_\boldmu^x[t^{L(A)}].
	\]
\end{lem}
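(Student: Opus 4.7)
The inequality $\E_\boldnu^x[t^{\ident{(L(A)>0)}}]\ge t$ is immediate since $t^{\ident{(L(A)>0)}}\in\{t,1\}\subseteq[t,1]$, so the real content is $\E_\boldnu^x[t^{\ident{(L(A)>0)}}]\ge \E_\boldmu^x[t^{L(A)}]$. The plan is to prove this inequality for the truncated $L_n(A)$ by induction on $n$, then pass to the limit via bounded convergence, exactly as $L_n(A)\uparrow L(A)$ ensures $t^{L_n(A)}\downarrow t^{L(A)}$ and $t^{\ident{(L_n(A)>0)}}\downarrow t^{\ident{(L(A)>0)}}$ for $t\in[\delta,1]\subseteq[0,1]$.

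Set $u_n(x,t):=\E_\boldmu^x[t^{L_n(A)}]$, $v_n(x,t):=\E_\boldnu^x[t^{\ident{(L_n(A)>0)}}]$, and $p_n(y):=\pr^y_\boldnu(L_n(A)=0)$. By conditioning on the first generation (using that starting from $\delta_x$ gives $L_0(A)=\ident_A(x)$ and that the subtrees rooted at the children are i.i.d.~copies), I would derive the two recursions
\[
u_{n+1}(x,t)=(t\ident_A(x)+\ident_{A^\complement}(x))\,G_\boldmu(u_n(\cdot,t)\,|\,x),
\]
\[
v_{n+1}(x,t)=t\ident_A(x)+\ident_{A^\complement}(x)\bigl(t+(1-t)G_\boldnu(p_n\,|\,x)\bigr),
\]
together with the identity $v_n(y,t)=t+(1-t)p_n(y)$ obtained directly from $\ident{(L_n(A)>0)}\in\{0,1\}$. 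The base case $n=0$ gives $u_0=v_0=t\ident_A+\ident_{A^\complement}$.

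For the inductive step, assume $u_n(y,t)\le v_n(y,t)$ for all $y\in X$. If $x\in A$ then $u_{n+1}(x,t)\le t=v_{n+1}(x,t)$ since $G_\boldmu\le 1$. If $x\notin A$ the key chain is
\[
u_{n+1}(x,t)=G_\boldmu(u_n(\cdot,t)\,|\,x)\le G_\boldmu\bigl(t\mathbf{1}+(1-t)p_n\,|\,x\bigr)\le G_\boldnu\bigl(t\mathbf{1}+(1-t)p_n\,|\,x\bigr)\le t+(1-t)G_\boldnu(p_n\,|\,x)=v_{n+1}(x,t),
\]
where the first inequality uses monotonicity of $G_\boldmu$ and the induction hypothesis (via $v_n=t\mathbf{1}+(1-t)p_n$), the second uses $\boldmu\gegerm\boldnu$ after noting $t\mathbf{1}+(1-t)p_n\in[t,1]^X\subseteq[\delta,1]^X$, and the third is the convexity of $G_\boldnu$ along the segment $[p_n,\mathbf{1}]$ (recalled in Section~\ref{subsec:genfun}) applied at parameter $t\in[0,1]$, which yields $G_\boldnu((1-t)p_n+t\mathbf{1}\,|\,x)\le(1-t)G_\boldnu(p_n\,|\,x)+tG_\boldnu(\mathbf{1}\,|\,x)=(1-t)G_\boldnu(p_n\,|\,x)+t$.

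The main obstacle is precisely organizing this three-step comparison: monotonicity lets us replace $u_n$ by the larger $v_n=t\mathbf{1}+(1-t)p_n$, the germ-order hypothesis applies only because this replacement lands in $[\delta,1]^X$, and convexity of $G_\boldnu$ then disentangles the $t\mathbf{1}$ from $p_n$ to recover the affine form of $v_{n+1}$. Once the induction closes, passing $n\to\infty$ via bounded convergence gives $\E_\boldnu^x[t^{\ident{(L(A)>0)}}]\ge \E_\boldmu^x[t^{L(A)}]$, and combined with the trivial bound by $t$ noted at the outset this proves the lemma.
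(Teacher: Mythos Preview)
Your argument is correct, and it is genuinely different from the paper's proof. The paper follows Hutchcroft's template: it builds two auxiliary monotone operators $I_\boldmu,I_\boldnu$ on $[t,1]^X$, iterates them from $t^{\ident_A}$ to obtain limits $\mathbf{w}_\infty\le\mathbf{v}_\infty$, and then sandwiches the two sides of the inequality by these limits. The lower bound $\mathbf{w}_\infty\ge t\vee\E_\boldmu^x[t^{L(A)}]$ is an induction much like your bound on $u_n$, but the upper part $\E_\boldnu^x[t^{\ident(L(A)>0)}]\ge\mathbf{v}_\infty$ is obtained indirectly: the paper introduces the level set $D=\{x:\mathbf{v}_\infty(x)=t\}$, a further iteration $\mathbf{h}_n$ started from $t^{\ident_D}$, and identifies $\mathbf{h}_n(x)$ with $\E_\boldnu^x[t^{E_n(D)}]$ where $E_n(D)$ counts particles in $D$ with no ancestors in $D$. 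Your proof bypasses this entire machinery by exploiting the convexity of $s\mapsto G_\boldnu(p_n+s(\mathbf{1}-p_n)\,|\,x)$ (stated in Section~\ref{subsec:genfun} for ordered endpoints $p_n\le\mathbf{1}$), which converts $G_\boldnu$ of the convex combination $t\mathbf{1}+(1-t)p_n$ directly into the affine expression for $v_{n+1}$. The gain is a considerably shorter and more transparent argument; the paper's route, by contrast, never invokes convexity and produces the intermediate fixed points $\mathbf{v}_\infty,\mathbf{w}_\infty$ as a byproduct.
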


\begin{proof}
	From Definition~\ref{def:ordering}, for every $\mathbf{z} \in [\delta,1]^X$ (that is, for every $\mathbf{z} \in [0,1]^X$ such that $\delta \mathbf{1} \le \mathbf{z} \le \mathbf{1}$) we have
	$G_\boldmu(\mathbf{z}) \le G_\boldnu(\mathbf{z})$. If $t=1$ there is nothing to prove. Let us fix $t \in (\delta,1)$ (the case $t=\delta$ follows by taking the limit). Clearly $G_\boldmu(\mathbf{z}) \le G_\boldnu(\mathbf{z})$ for all $\mathbf{z} \in [t,1]^X$.
	
	The strategy of the proof is to find $\mathbf{v}_\infty, \mathbf{w}_\infty	\in [t,1]^X$ such that $\E_\boldnu^x[t^{\ident{(L(A)>0)}}] \ge \mathbf{v}_\infty(x) \ge \mathbf{w}_\infty(x) \ge t \vee \E_\boldmu^x[t^{L(A)}]$ for all $x \in X$.
	To this aim define $I_\boldmu, I_\boldnu : [t,1]^X \mapsto [t,1]^X$ as follows
	\begin{equation}\label{eq:Imu}
		\begin{split}
			I_\boldmu \mathbf{z}(x)&:=
			\big (t \vee t^{\ident(x \in A)}G_\boldmu(\mathbf{z}|x)\big ) \wedge \mathbf{z}(x)\\
			&=t \vee \big (t^{\ident(x \in A)}G_\boldmu(\mathbf{z}|x) \wedge \mathbf{z}(x)\big )
			=
			\begin{cases}
				t & x \in A\\
				t \vee \big (G_\boldmu(\mathbf{z}|x) \wedge \mathbf{z}(x)\big )& x \not \in A\\	
			\end{cases}
		\end{split}
	\end{equation}
	and $I_\boldnu$ is defined analogously by using $G_\boldnu$ instead of $G_\boldmu$. It is easy to show that 
	$I_\boldmu, I_\boldnu$ are nondecreasing, continuous functions on $[t,1]^X$. Moreover, for all $\mathbf{z} \in [t,1]^X$ we have
	$t \mathbf{1} \le I_\boldmu \mathbf{z} \le I_\boldnu \mathbf{z} \le \mathbf{z}$.
	Define recursively
	\[
	\begin{cases}
		\mathbf{v}_0(x)=\mathbf{w}_0(x):=t^{\ident(x \in A)}, & \forall x \in X,\\
		\mathbf{v}_{n+1}:=I_\boldnu \mathbf{v}_n, & \forall n \in \mathbb{N},\\
		\mathbf{w}_{n+1}:=I_\boldmu \mathbf{w}_n, & \forall n \in \mathbb{N},\\
	\end{cases}
	\]
	whence $\{\mathbf{w}_n\}_{n \in \mathbb{N}}$ and $\{\mathbf{v}_n\}_{n \in \mathbb{N}}$ are nonincreasing sequences in $[t,1]^X$ such that
	$t \mathbf{1} \le \mathbf{w}_n \le \mathbf{v}_n \le \mathbf{z}$, therefore $\mathbf{v}_n \downarrow \mathbf{v}_\infty$, $\mathbf{w}_n \downarrow \mathbf{w}_\infty$	and
	$t \mathbf{1} \le \mathbf{w}_\infty \le \mathbf{v}_\infty \le \mathbf{z}$. By the same arguments of Proposition~\ref{pro:closed},
	we have $I_\boldnu \mathbf{v}_\infty=\mathbf{v}_\infty$ and $I_\boldmu \mathbf{w}_\infty=\mathbf{w}_\infty$.
	We prove now, by induction on $n \in \mathbb{N}$, that $\mathbf{w}_n(x) \ge t \vee \E_\boldmu^x[t^{L_n(A)}]$ for all $n \in \mathbb{N}$ which, in turn, implies $\mathbf{w}_\infty(x) \ge t \vee \E_\boldmu^x[t^{L(A)}]$. If $n=0$, then $\mathbf{w}_0(x)=t^{\ident(x \in A)} \ge t \vee \E_\boldmu^x[t^{L_0(A)}]$ since $\ident(x \in A)=L_0(A)$. 
	Suppose that the inequality holds for $n \in \mathbb{N}$, then, by using that the MBP is a stationary Markov process and that the set of descendants of different particles belonging to a fixed generation are independent, we have for all $x \in X$
	\[
	\begin{split}
		\E_\boldmu^x[t^{L_{n+1}(A)}]&=	\E_\boldmu^x \big [\E_\boldmu^x[t^{L_{n+1}(A)} \big | \mathcal{F}_1]\big ]	
		= t^{\ident(x \in A)} \sum_{f \in S_X} \mu_x(f) \prod_{y \in X}\E_\boldmu^y[t^{L_n(A)}]^{f(y)}\\
		&= t^{\ident(x \in A)} G_\boldmu(\E_\boldmu^{(\cdot)}[t^{L_n(A)}]|x) \le 
		t^{\ident(x \in A)} G_\boldmu(\mathbf{w}_n|x)
	\end{split}
	\]
	(where $\E_\boldmu^{(\cdot)}[t^{L_n(A)}]$ represents the vector $y \mapsto \E_\boldmu^y[t^{L_n(A)}]$). Note that in the last inequality we used the induction hypothesis and the fact that $G_\boldmu$ is nondecreasing. Clearly $\E_\boldmu^x[t^{L_{n+1}(A)}] \le \E_\boldmu^x[t^{L_{n}(A)}]
	\le \mathbf{w}_n(x)$, thus
	\[
	t \vee \E_\boldmu^x[t^{L_{n+1}(A)}] \le
	t \vee \big (\mathbf{w}_n(x) \wedge t^{\ident(x \in A)} G_\boldmu(\mathbf{w}_n|x)  \big ) =I_\boldmu \mathbf{w}_n=
	\mathbf{w}_{n+1}.
	\]
	
	Now we prove that $\E_\boldnu^x[t^{\ident{(L(A)>0)}}] \ge \mathbf{v}_\infty(x)$ for all $x \in X$. Let us define $D:=\{x \in X \colon \mathbf{v}_\infty(x)=t\}$; clearly, since $t\le \mathbf{v}_\infty(x)\le t^{\ident(x\in A)}$ for all $x \in X$, then $D \supseteq A$. Define recursively
	\[
	\begin{cases}
		\mathbf{h}_0(x):=t^{\ident(x \in D)}, & \forall x \in X\\
		\mathbf{h}_{n+1}:=I_\boldnu \mathbf{h}_n & \forall n \in \mathbb{N}.
	\end{cases}
	\]
	The sequence $\{\mathbf{h}_n\}_{n \in \mathbb{N}}$ is nondecreasing therefore $\mathbf{h}_n \downarrow \mathbf{h}_\infty$ for some $\mathbf{h}_\infty \in [t,1]^X$. Moreover, since $I_\boldnu \mathbf{v}_\infty= \mathbf{v}_\infty \le \mathbf{h}_0$, then $t \le \mathbf{v}_\infty(x) \le \mathbf{h}_\infty(x) \le t^{\ident(x \in D)}$; thus $\mathbf{h}_n(x)=t$ for all $x \in D$. On the other hand, if $x \not \in D$, then, by definition of $D$, $ t < \mathbf{v}_\infty(x) \le \mathbf{h}_n(x)$ for all $n \in \mathbb{N}$ and $G_\boldnu(\mathbf{h}_n|x) \ge G_\boldnu(\mathbf{v}_\infty|x)=\mathbf{v}_\infty (x)> t$ for all $n \in \mathbb{N}$. Therefore, by using equation~\eqref{eq:Imu},
	\[
	\mathbf{h}_{n+1}(x) =
	\begin{cases}
		t & x \in D\\
		G_\boldnu(\mathbf{h}_n|x) \wedge \mathbf{h}_n(x)& x \not \in D.\\	
	\end{cases}
	\]
	Define $E_n(D)$ as the number of particles in $D$ by time $n$ with no ancestors in $D$ and let $E(D):=\lim_{n \to +\infty} E_n(D)$ (note that $E_{n+1}(D) \ge E_n(D)$). If, for instance, $x \in D$, then $E_n(D)=1$ for all $n \in \mathbb{N}$. We want to prove that $\mathbf{h}_n(x)=\E_\boldnu^x[t^{E_n(D)}]$ for all $x \in X$ which, according to the Bounded Convergence Theorem, implies $\mathbf{h}_\infty(x)=\E_\boldnu^x[t^{E(D)}]$ for all $x \in X$.
	To this aim note that $L(A)>0$ implies $E(D) \ge 1$, therefore $\E_\boldnu^x[t^{E(D)}] \le \E_\boldnu^x[t^{\ident(L(A)>0)}]$ for all $x \in X$. Define
	$\mathbf{\widetilde h}_n(x):=\E_\boldnu^x[t^{E_n(D)}]$ for all $x \in X$.
	By using again the fact that the MBP is a stationary Markov process and that the progenies of different particles are independent, we see that the (nonincreasing) sequence $\{\mathbf{\widetilde h}_n(x)\}_{n \in \mathbb{N}}$ satisfies the following recursive equation  for all $x \in X$
	\[
	\begin{split}
		\mathbf{\widetilde h}_{n+1}(x)=	\E_\boldnu^x[t^{E_{n+1}(D)}]&=
		\begin{cases}
			t & x \in D\\
			\E_\boldnu^x \big [	\E_\boldnu^x[t^{E_{n+1}(D)}|\mathcal{F}_1] \big ]= (\spadesuit) & x \not \in D
		\end{cases}\\
		(\spadesuit)&=
		\sum_{f \in S_X} \nu_x(f) \prod_{y \in X}\E_\boldnu^y[t^{E_{n}(D)}]^{f(y)}=G_\boldnu(\mathbf{\widetilde h}_n|x) = \mathbf{\widetilde h}_n(x) \wedge G_\boldnu(\mathbf{\widetilde h}_n|x)
	\end{split}
	\]
	where, in the last equality, we used the fact that, by definition, $\mathbf{\widetilde h}_{n+1}(x) \le \mathbf{\widetilde h}_{n}(x)$ for all $x \in X$, which implies   $G_\boldnu(\mathbf{\widetilde h}_n|x) = \mathbf{\widetilde h}_{n+1}(x)\le \mathbf{\widetilde h}_{n}(x)$ for all $x \not \in D$.
	We observe that $\mathbf{\widetilde h}_{0}=\mathbf{h}_{0}$ since $\E_\boldnu^x[E_0(D)]=\ident(x \in D)$ for all $x \in X$; moreover the sequences $\{\mathbf{\widetilde h}_n(x)\}_{n \in \mathbb{N}}$ and $\{\mathbf{h}_n(x)\}_{n \in \mathbb{N}}$ satisfy the same recursive equation, hence
	$\mathbf{\widetilde h}_n=\mathbf{h}_n$ for all $n \in \mathbb{N}$. This yields
	\[
	\E_\boldnu^x[t^{\ident(L(A)>0)}] \ge \E_\boldnu^x[t^{E(D)}]= \lim_{n \to +\infty} \mathbf{\widetilde h}_n(x)= \lim_{n \to +\infty} \mathbf{h}_n(x)= \mathbf{h}_\infty  \ge \mathbf{v}_\infty.
	\]
\end{proof}

\begin{lem}\label{lem:germorder2}
	Let $\boldmu \gegerm \boldnu$ and  $A \subseteq X$.  If $\delta<1$ is the same as in the definition of $\gegerm$, then for all $t \in [\delta,1)$ all $x \in X$, 
	\[
	\mathbf{q}^\boldnu(x,A) \ge \frac{\mathbf{q}^\boldmu(x,A) \vee t-t}{1-t}.
	\]
\end{lem}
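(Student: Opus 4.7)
The plan is to establish the equivalent inequality $\mathbf q^\boldmu(A)\le\mathbf r$ for the vector
\[
\mathbf r(x):=t+(1-t)\mathbf q^\boldnu(x,A),
\]
which lies in $[t,1]^X\subseteq[\delta,1]^X$. Once $\mathbf q^\boldmu(A)\le\mathbf r$ is known, combining it with $\mathbf r\ge t\mathbf 1$ gives $\mathbf q^\boldmu(A)\vee t\le\mathbf r$, and rearranging is precisely the desired bound.

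To show $G_\boldmu(\mathbf r)\le\mathbf r$, I would exploit that $\mathbf r$ is a convex combination of the two $G_\boldnu$-fixed points $\mathbf q^\boldnu(A)$ and $\mathbf 1$, so the convexity of $G_\boldnu$ along this segment (recalled in Section~\ref{subsec:genfun}) gives
\[
G_\boldnu(\mathbf r)\le(1-t)G_\boldnu(\mathbf q^\boldnu(A))+tG_\boldnu(\mathbf 1)=\mathbf r;
\]
since $\mathbf r\ge t\mathbf 1\ge\delta\mathbf 1$, the germ-order hypothesis then yields $G_\boldmu(\mathbf r)\le G_\boldnu(\mathbf r)\le\mathbf r$, so $\mathbf r\in U_{G_\boldmu}$.

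Next, I would use Lemma~\ref{lem:germorder} to compare $\mathbf r$ with $\mathbf p^\boldmu_0(x):=\pr^x_\boldmu(L(A)=0)$. Writing $\E^x_\boldnu[t^{\ident(L(A)>0)}]=t+(1-t)\pr^x_\boldnu(L(A)=0)$ and noting that $\pr^x_\boldnu(L(A)=0)\le\mathbf q^\boldnu(x,A)$ (since $\{L(A)=0\}\subseteq\mathcal E(A)$) and that $t^{L(A)}\ge\ident(L(A)=0)$, the lemma produces the chain
\[
\mathbf r(x)\ge\E^x_\boldnu[t^{\ident(L(A)>0)}]\ge t\vee\E^x_\boldmu[t^{L(A)}]\ge\E^x_\boldmu[t^{L(A)}]\ge\mathbf p^\boldmu_0(x),
\]
so $\mathbf r\ge\mathbf p^\boldmu_0$.

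Finally, Lemma~\ref{lem:martingale} together with the Markov property identifies
\[
G^{(n)}_\boldmu(\mathbf p^\boldmu_0)(x)=\E^x_\boldmu\bigl[(\mathbf p^\boldmu_0)^{\eta_n}\bigr]=\pr^x_\boldmu\bigl(\eta_m|_A=0\text{ for every }m\ge n\bigr),
\]
a sequence increasing to $\pr^x_\boldmu(\mathcal E(A))=\mathbf q^\boldmu(x,A)$. Combining $\mathbf p^\boldmu_0\le\mathbf r$ with the monotonicity of $G^{(n)}_\boldmu$ and $G^{(n)}_\boldmu(\mathbf r)\le\mathbf r$ yields $G^{(n)}_\boldmu(\mathbf p^\boldmu_0)\le\mathbf r$ for every $n\in\N$, and letting $n\to\infty$ gives $\mathbf q^\boldmu(A)\le\mathbf r$. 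I expect the main hurdle to be singling out the right vector $\mathbf r$: once the convex combination is chosen, convexity of $G_\boldnu$ fuses cleanly with the germ order to force $\mathbf r\in U_{G_\boldmu}$, while Lemma~\ref{lem:germorder} precisely bridges between the never-visit event (carried by $\mathbf p^\boldmu_0$) and the extinction event $\mathcal E(A)$ (carried by $\mathbf q^\boldmu(A)$).
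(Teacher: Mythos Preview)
Your argument is correct and follows a genuinely different route from the paper's. The paper passes to the space-time BRW on $X\times\mathbb N$, applies Lemma~\ref{lem:germorder} to the truncated sets $A_k:=A\cap\bigl(X\times[k,\infty)\bigr)$, and then sends $k\to\infty$: in that setting $\ident(L(A_k)>0)\downarrow\ident(L(A)=+\infty)$ while $t^{L(A_k)}\to\ident(L(A)<+\infty)$, and both limits are read off as $\mathbf q^\boldnu$ and $\mathbf q^\boldmu$ directly. Your proof stays entirely in the original BRW: the extra ingredient is the convexity of $s\mapsto G_\boldnu(\mathbf q^\boldnu(A)+s(\mathbf 1-\mathbf q^\boldnu(A)))$ (recalled in Section~\ref{subsec:genfun}), which together with the germ hypothesis forces $\mathbf r\in U_{G_\boldmu}$; you then invoke Lemma~\ref{lem:germorder} only once, on $A$ itself, to place $\mathbf p^\boldmu_0\le\mathbf r$, and close by the iteration $G_\boldmu^{(n)}(\mathbf p^\boldmu_0)\uparrow\mathbf q^\boldmu(A)$. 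Both approaches ultimately rest on Lemma~\ref{lem:germorder}; yours avoids the space-time construction at the price of using convexity of $G_\boldnu$, while the paper's is convexity-free but needs the auxiliary space-time machinery (which it reuses later for Corollary~\ref{cor:maxdispl}). Your identification $G_\boldmu^{(n)}(\mathbf p^\boldmu_0)(x)=\pr^x_\boldmu(\eta_m|_A=0\text{ for all }m\ge n)$ is exactly what Lemma~\ref{lem:martingale} and the branching property give, and the monotone limit is indeed $\mathbf q^\boldmu(x,A)$ since $\mathcal E(A)=\{\eta_n|_A=0\text{ eventually}\}$.
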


\begin{proof}
	In order to prove this lemma (by using Lemma~\ref{lem:germorder}) we define an auxiliary space-time version of the MBP (as in \cite[Lemma 2.3]{cf:Hut2022}). More precisely, given a MBP $\{\eta_n\}_{n \in \mathbb{N}}$ on $X$ we denote by $\{\eta^{st}_n\}_{n \in \mathbb{N}}$ a MBP on  $X \times \mathbb{N}$ that we call \textit{space-time version} of the original process and which is defined by
	$\eta_n^{st}(x,m):=\eta_n(x)\delta(n,m)$ (where $\delta(n,m)=1$ if $n=m$ and $0$ otherwise). Roughly speaking, the particles in $x$ at time $n$ in the original MBP, are now placed in $(x,n)$ at time $n$ in the st-MBP. The space-time version of $\boldmu$, say $ \boldmu^{st}$ is defined as follows, $\forall g \in S_{X \times \mathbb{N}}$ and $\forall (x,n) \in X \times \mathbb{N}$,
	\[
	\mu_{(x,n)}^{st}(g)=
	\begin{cases}
		\mu_x(f) & \text{if }g=f \otimes \delta_{n+1}\\
		0 & \text{otherwise}
	\end{cases}
	\]
	where $\big (f \otimes \delta_{i} \big ) (y,j):=f(y) \delta(i,j)$ for all $(y,j) \in X \times \mathbb{N}$.
	
	Elementary computations show that for all $\mathbf{z} \in [0,1]^{X \times \mathbb{N}}$, 
	$G_{\boldmu^{st}}(\mathbf{z}|(x,n))= G_\boldmu(\mathbf{z}(\cdot, n+1)|x)$ and $G_{\boldnu^{st}}(\mathbf{z}|(x,n))= G_\boldnu(\mathbf{z}(\cdot, n+1)|x)$. If $\boldmu \gegerm \boldnu$, then $\boldmu^{st} \gegerm \boldnu^{st}$. Indeed if $\mathbf{z} \in [\delta,1]^{X \times \mathbb{N}} $ (where $\delta<1$), then $\mathbf{z}(\cdot,n) \in [\delta,1]^X$ for all $n \in \mathbb{N}$ whence
	\[
	G_{\boldmu^{st}}(\mathbf{z}|(x,n))= G_\boldmu(\mathbf{z}(\cdot, n+1)|x) \le  G_\boldnu(\mathbf{z}(\cdot, n+1)|x)=G_{\boldnu^{st}}(\mathbf{z}|(x,n))
	\]
	for all $(x,n) \in X \times \mathbb{N}$.
	
	Moreover $A \subseteq X$ is visited infinitely often by $(X, \boldmu)$ (resp.~$(X, \boldnu)$) if and only if $A \times \mathbb{N}$ is visited infinitely often by $(X \times \mathbb{N}, \boldmu^{st})$ (resp.~$(X \times \mathbb{N}, \boldnu^{st})$). 
	In particular $\mathbf{q}^\boldmu(x,A)=\mathbf{q}^{\boldmu^{st}}((x,n),A \times \mathbb{N})$ and 
	$\mathbf{q}^\boldnu(x,A)=\mathbf{q}^{\boldnu^{st}}((x,n),A \times \mathbb{N})$ for all $(x,n) \in X \times \mathbb{N}$, $A \subseteq X$.
	Thus, it suffices to prove the lemma for the space-time version of the MBP.
	
	To avoid a cumbersome notation, for the rest of the proof we write $\boldmu$ and $\boldnu$ instead of $\boldmu^{st}$ and $\boldnu^{st}$ respectively. Moreover we use $\pr_\boldmu^{x,n}$ and $\pr_\boldnu^{x,n}$ to denote the laws of the space-time processes starting from $(x,n)$. Given $A \subseteq X \times \mathbb{N}$, we define $A_k:=A \cap \big (X \times [k, +\infty) \big )$. We observe that 
	\[
	L(A)=+\infty 
	\Longleftrightarrow L(A_k)>0, \, \forall k \in \mathbb{N}
	\Longleftrightarrow L(A_k)>0, \, \text{for infinitely many } k \in \mathbb{N}
	\]
	since $\{L(A_{k+1})>0\} \subseteq \{L(A_{k})>0\}$ and at every fixed time the number of particles is finite.
	Whence $\{L(A)=+\infty\}=\bigcap_{k \in \mathbb{N}} \{L(A_{k})>0\}$ and $\{L(A)<+\infty\}=\liminf_{k \in \mathbb{N}} \{L(A_{k})=0\}$. This implies $\ident(L(A_k)>0) \downarrow \ident(L(A)=+\infty)$.
	Note that $L(A)=+\infty$ implies $L(A_k)=+\infty$ for all $k \in \mathbb{N}$ while $L(A)<+\infty$ implies $L(A_k)=0$ eventually as $k \to +\infty$.
	
	We apply Lemma~\ref{lem:germorder} to $A_k$ and, for every fixed $(x,n) \in X \times \mathbb{N}$, we obtain
	\begin{equation}\label{eq:germ1}
		\E_\boldnu^{x,n}[t^{\ident(L(A_k)>0)}] \ge t \wedge \E_\boldmu^{x,n}[t^{L(A_k)}].
	\end{equation}
	According to the Monotone Convergence Theorem 
	\begin{equation}\label{eq:germ2}
		\lim_{k \to +\infty} \E_\boldnu^{x,n}[t^{\ident(L(A_k)>0)}]=\E_\boldnu^{x,n}[t^{\ident(L(A)=+\infty)}]=t(1-\mathbf{q}^{\boldnu}((x,n),A))+\mathbf{q}^{\boldnu}((x,n),A). 
	\end{equation}
	According to the Bounded Convergence Theorem , if $t<1$
	\begin{equation}\label{eq:germ3}
		\lim_{k \to +\infty} \E_\boldmu^{x,n}[t^{L(A_k)}]=\E_\boldmu^{x,n}[\ident(L(A)<+\infty)]=\mathbf{q}^{\boldmu}((x,n),A).
	\end{equation}
	By using equations~\eqref{eq:germ1}, \eqref{eq:germ2} and \eqref{eq:germ3} we obtain
	\[
	t(1-\mathbf{q}^{\boldnu}((x,n),A))+\mathbf{q}^{\boldnu}((x,n),A) \ge t \wedge \mathbf{q}^{\boldmu}((x,n),A)
	\]
	which yields the result.
\end{proof}

\begin{proof}[Proof of Theorem~\ref{th:germorder}]\leavevmode
	\begin{enumerate}
		\item By taking $t=\delta$ in Lemma~\ref{lem:germorder2} we have
		\[
		\mathbf{q}^\boldnu(x,A) \ge \frac{\mathbf{q}^\boldmu(x,A) \vee \delta-\delta}{1-\delta} \ge \frac{\mathbf{q}^\boldmu(x,A)-\delta}{1-\delta}
		\]
		which yields the claim.
		\item 
		Fix $x \in X$ and suppose that 
		$\mathbf{q}^\boldmu(x,A)=1$. Then by Lemma~\ref{lem:germorder2}, if we choose $t \in (\delta,1)$ we have
		\[
		\mathbf{q}^\boldnu(x,A) \ge \frac{\mathbf{q}^\boldmu(x,A) \vee t-t}{1-t}=
		\frac{1-t}{1-t}=1.
		\]
		\item Suppose that $\mathbf{q}^\boldnu(A)=\mathbf{q}^\boldnu(X)$ and that $\sup_{x \in X} \mathbf{q}^\boldnu(x,X) <1$. Then, by Lemma~\ref{lem:germorder2},
		\[
		\begin{split}
			1 &> \sup_{x \in X} \mathbf{q}^\boldnu(x,X)
			= \sup_{x \in X} \mathbf{q}^\boldnu(x,A) 
			\ge \frac{\sup_{x \in X} \mathbf{q}^\boldmu(x,A) \vee t-t}{1-t}
			\ge 	\frac{\sup_{x \in X} \mathbf{q}^\boldmu(x,A) -t}{1-t}	
		\end{split}
		\]
		which is equivalent to $\sup_{x \in X} \mathbf{q}^\boldmu(x,A) <1$. According to Theorem~\ref{th:moyal2} the last inequality implies $\mathbf{q}^\boldmu(A)=\mathbf{q}^\boldmu(X)$.		
	\end{enumerate}
\end{proof}

%

\begin{proof}[Proof of Corollary~\ref{cor:maxdispl}]
	Consider, as in the proof of Lemma~\ref{lem:germorder2},
	the space-time version $\{\eta^{st}_n\}_{n \in \mathbb{N}}$ of the process. Clearly
	\[
	\limsup_{n \to +\infty} \Big \{\sum_{y \in A_n} \eta_n(y)>0 \Big \}=\mathcal{S}^{st} \Big (\bigcup_{n \in \mathbb{N}} \big (A_n \times \{n\} \big )\Big )
	\]
	where $\mathcal{S}^{st}(\cdot)$ is the survival event of the space-time process. Recall that, for all $A \subseteq X$, $\mathbf{q}^\boldmu(x, A)=\mathbf{q}^{\boldmu^{st}}((x,n), A \times \mathbb{N})$ and 
	$\mathbf{q}^\boldnu(x, A)=\mathbf{q}^{\boldnu^{st}}((x,n), A \times \mathbb{N})$ for all $(x,n) \in X \times \mathbb{N}$. 
			$(1)$ and $(2)$ follows from Theorem~\ref{th:germorder} applied to the space-time process. $(3)$ follow from $(2)$ by noting that 
	$\limsup_{n \to +\infty} \{\sum_{y \in A_n}\eta_n(y)>0\} \subseteq \mathcal{S}(X)$.
\end{proof}

\begin{proof}[Details on Example~\ref{exmp:maximal}]
	We note that 
	\[
	\begin{split}
	\pr^{x_0}_\boldmu (\limsup_{n \to +\infty} M_n /f(n) \le \alpha) =1 &\Longleftrightarrow 
	\pr^{x_0}_\boldmu
	\Big ( \limsup_{n \to +\infty}\Big \{\sum_{y \colon d(x_0,y) \ge (\alpha+\varepsilon)f(n)} \eta_n(y) >0 \Big \}
	\Big ) =0, \ \forall \varepsilon >0	\\
	\pr^{x_0}_\boldmu (\liminf_{n \to +\infty} m_n /f(n) \le \alpha) =1 &\Longleftrightarrow 
	\pr^{x_0}_\boldmu
	\Big ( \limsup_{n \to +\infty}\Big \{\sum_{y \colon d(x_0,y) \le (\alpha-\varepsilon)f(n)} \eta_n(y) >0 \Big \}
	\Big ) =0, \ \forall \varepsilon >0	\\
	\end{split}
	\]
	and similar equalities hold for $\boldnu$. The result follows by applying Corollary~\ref{cor:maxdispl}
	to $A^{\varepsilon, +}_n:=\{y \in X \colon d(x_0,y) \ge (\alpha +\varepsilon)f(n)\}$ and $A^{\varepsilon, -}_n:=\{y \in X \colon d(x_0,y) \le (\alpha -\varepsilon)f(n)\}$..
\end{proof}

\section{Appendix: product of metric spaces}

In this appendix we show how the product of metric spaces can be endowed with a finite metric which generates the pointwise convergence
topology. We also address separability and completeness. We note that $\mathbb R^X$ can be endowed with a finite metric which turns
it into a Polish space.

\begin{lem}\label{lem:finitemetric}
  Consider a metric space $(Y,d)$ and a function $f \in L^1([0, +\infty))$ such that $f$ is non increasing a.e.~and $\int_0^\varepsilon f(t) \diff t>0$ for all $\varepsilon >0$. 
  Then $d_1(x,y):= \int_0^{d(x,y)} f(t) \diff t$ for all $x,y \in Y$ defines a finite metric which generates the same topology.
\end{lem}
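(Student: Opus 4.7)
The plan is to reduce everything to properties of the real-valued function $F : [0,+\infty) \to [0,+\infty)$ defined by $F(t) := \int_0^t f(s) \diff s$, so that $d_1(x,y) = F(d(x,y))$.

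First I would record the preliminary observations: since $f$ is nonincreasing a.e., the assumption $\int_0^\varepsilon f(t) \diff t > 0$ for every $\varepsilon > 0$ forces $f \ge 0$ a.e.\ (otherwise, if $f(a) < 0$ on a positive-measure set, monotonicity would give $f \le f(a) < 0$ a.e.\ on $[a,+\infty)$, contradicting $f \in L^1$). Hence $F$ is nondecreasing, continuous, with $F(0)=0$ and $F(+\infty) = \|f\|_{L^1} < +\infty$. Finiteness of $d_1$ is then immediate, and symmetry of $d_1$ follows from the symmetry of $d$.

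The core point is the triangle inequality, which I would derive from subadditivity of $F$. For $a,b \ge 0$, the change of variables $t = a+s$ combined with the a.e.\ monotonicity of $f$ gives
\[
F(a+b) - F(a) = \int_a^{a+b} f(t)\,\diff t = \int_0^b f(a+s)\,\diff s \le \int_0^b f(s)\,\diff s = F(b),
\]
so $F(a+b) \le F(a) + F(b)$. Combining this with monotonicity of $F$ and the triangle inequality for $d$ yields
\[
d_1(x,z) = F(d(x,z)) \le F(d(x,y)+d(y,z)) \le F(d(x,y)) + F(d(y,z)) = d_1(x,y) + d_1(y,z).
\]
For positivity/nondegeneracy, $d_1(x,y) = 0$ gives $F(d(x,y)) = 0$, and the hypothesis $\int_0^\varepsilon f\,\diff t > 0$ for all $\varepsilon > 0$ means $F$ is strictly positive on $(0,+\infty)$, forcing $d(x,y) = 0$, i.e.\ $x=y$. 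This is the step where the hypothesis on $\int_0^\varepsilon f$ is used, and I view it as the one genuinely substantive requirement on $f$ beyond $L^1$-integrability.

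Finally, to prove that $d$ and $d_1$ generate the same topology I would show that $F$ is a homeomorphism between a neighborhood of $0$ in $[0,+\infty)$ and its image. Continuity of $F$ at $0$ (absolute continuity of the integral) gives $d(x_n,x) \to 0 \Rightarrow d_1(x_n,x) \to 0$. For the converse, if $d(x_n,x) \not\to 0$ there is $\varepsilon > 0$ and a subsequence with $d(x_{n_k},x) \ge \varepsilon$; since $F$ is nondecreasing and $F(\varepsilon) > 0$, we obtain $d_1(x_{n_k},x) \ge F(\varepsilon) > 0$, contradicting $d_1(x_n,x) \to 0$. Hence the two metrics define the same convergent sequences, and since both are metric topologies this gives equality of the topologies. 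No step looks technically difficult; the only subtlety is keeping track of why the two hypotheses on $f$ are used (monotonicity for subadditivity, strict positivity of $\int_0^\varepsilon f$ for nondegeneracy and for recovering the $d$-topology).
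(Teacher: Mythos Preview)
Your proof is correct and follows essentially the same approach as the paper's: both derive the triangle inequality from the monotonicity of $f$ via the same change-of-variables computation (you package it as subadditivity of $F$, the paper writes the integrals out directly), and both use the hypothesis $\int_0^\varepsilon f > 0$ for nondegeneracy and for the topology comparison. The only cosmetic difference is that the paper compares open balls directly while you compare convergent sequences; these are interchangeable for metric topologies.
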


\begin{proof}
Note that $f$ is a.s.~nonnegative and $\int_0^a f(t) \diff t=0$ if and only if $a=0$. Whence $d_1(x,y) \ge 0$ for all $x,t \in Y$ and the equality holds if and only if $d(x,y)=0$, that is, $x=y$. 
As for the triangle inequality
\[
\begin{split}
 d_1(x,z)+d_1(z,y)&=\int_0^{d(x,z)} f(t) \diff t+\int_0^{d(z,y)} f(t) \diff t \\
 &{\ge} 
 \int_0^{d(x,z)} f(t) \diff t+\int_0^{d(z,y)} f(t+d(x,z)) \diff t \\
 & = \int_0^{d(x,z)} f(t) \diff t+\int_{d(x,z)}^{d(x,z)+d(z,y)} f(t) \diff t\\
& = \int_{0}^{d(x,z)+d(z,y)} f(t) \diff t \ge \int_0^{d(x,z)} f(t) \diff t
= d_1(x,y).
\end{split}
\]
Finally $d_1(x,y) \le \|f\|_1:=\int_0^\infty f(t) \diff t <+\infty$ for all $x, y \in Y$.
 
Let us prove that the topology is the same.
On the one hand $B(x, r)=B_1(x, \int_0^r f(t) \diff t)$ for all $r>0$. On the other hand, $\varepsilon \mapsto \int_0^\varepsilon f(t) \diff t$ is right continuous in 0, whence for every $r>0$ there exists $\varepsilon>0$ such that $0<\int_0^\varepsilon f(t) \diff t=:r_1<r$, that is,
$B(x,r) \supseteq B_1(x, r_1)$.
\end{proof}

An example is given by $f:=\ident_{[0,M]}$ which gives
$d_1(x,y)=\min(d(x,y),M)$, where $M>0$. Since the topology is the same, if the original metric space is separable (resp.~complete) the same hold for the new one. The advantage of a finite metric is clear in the following lemma.

We suppose that $\{(Y_n,d_n)\}_{n \in J}$ is a countable (finite or infinite) sequence of finite metric spaces where $\sup_{x,y \in Y_n} d_n(x,y) = M_n<+\infty$.

\begin{pro}\label{pro:productspace}
 Let $\{\alpha_n\}_{n \in J}$ a sequence of positive real numbers such that $\sum_{n \in J} \alpha_n M_n <+\infty$. Consider the product space $\prod_{n \in J} Y_n$ endowed with the product topology (the \emph{pointwise convergence topology}).
 Then 
 \begin{equation}\label{eq:productdistance}
 d(\mathbf{z}, \mathbf{v}):=
 \sum_{n \in J} \alpha_n d_n(\mathbf{z}(n),\mathbf{v}(n) )
 \end{equation}
is a finite metric on $\prod_{n \in J} Y_n$ which generates the pointwise convergence topology.
\end{pro}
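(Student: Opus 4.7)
The plan is to verify the metric axioms for $d$ defined in \eqref{eq:productdistance} and then show that convergence with respect to $d$ coincides with coordinatewise (product) convergence, which by standard arguments is equivalent to showing the two topologies agree.

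For the metric axioms: finiteness is immediate since
\[
d(\mathbf{z},\mathbf{v})=\sum_{n\in J}\alpha_n d_n(\mathbf{z}(n),\mathbf{v}(n))\le \sum_{n\in J}\alpha_n M_n<+\infty,
\]
where the convergence of the right-hand side is by hypothesis. Positivity, symmetry, and the vanishing condition $d(\mathbf{z},\mathbf{v})=0\iff \mathbf{z}=\mathbf{v}$ come at once from the fact that each $d_n$ is a metric and $\alpha_n>0$ for every $n$. The triangle inequality is obtained by summing over $n$ the termwise inequality $d_n(\mathbf{z}(n),\mathbf{v}(n))\le d_n(\mathbf{z}(n),\mathbf{w}(n))+d_n(\mathbf{w}(n),\mathbf{v}(n))$, which is legal because all summands are nonnegative and the two resulting series converge by the previous bound.

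For the topologies: I will show that a sequence $\{\mathbf{z}^{(k)}\}_{k\in\mathbb{N}}$ converges to $\mathbf{z}$ in $d$ if and only if $\mathbf{z}^{(k)}(n)\to \mathbf{z}(n)$ in $(Y_n,d_n)$ for every $n\in J$. This suffices because both topologies are generated by a countable family of pseudometrics (or are themselves metrizable in the countable product case), so convergence of sequences characterizes them. The easy direction: if $d(\mathbf{z}^{(k)},\mathbf{z})\to 0$ then for each fixed $n$, $\alpha_n d_n(\mathbf{z}^{(k)}(n),\mathbf{z}(n))\le d(\mathbf{z}^{(k)},\mathbf{z})\to 0$, and dividing by $\alpha_n>0$ gives coordinatewise convergence. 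For the other direction, fix $\varepsilon>0$ and use the tail summability of $\sum_n\alpha_n M_n$ to choose a finite $F\subset J$ with $\sum_{n\in J\setminus F}\alpha_n M_n<\varepsilon/2$; for each $n\in F$, pointwise convergence lets us pick $k_n$ so that $\alpha_n d_n(\mathbf{z}^{(k)}(n),\mathbf{z}(n))<\varepsilon/(2|F|)$ for $k\ge k_n$, whence
\[
d(\mathbf{z}^{(k)},\mathbf{z})\le \sum_{n\in F}\alpha_n d_n(\mathbf{z}^{(k)}(n),\mathbf{z}(n))+\sum_{n\in J\setminus F}\alpha_n M_n<\varepsilon
\]
for $k\ge \max_{n\in F}k_n$.

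The only delicate point is the equivalence between the two topologies rather than just convergence of sequences; if one prefers to avoid sequential arguments, the cleanest route is to compare bases directly. Any $d$-ball $B_d(\mathbf{z},r)$ contains a basic open set of the product topology, obtained by choosing $F\subset J$ finite with $\sum_{n\notin F}\alpha_n M_n<r/2$ and requiring $d_n(\mathbf{z}'(n),\mathbf{z}(n))<r/(2\alpha_n|F|)$ for $n\in F$; conversely, any basic product-open neighborhood $\{\mathbf{z}':d_n(\mathbf{z}'(n),\mathbf{z}(n))<r_n,\,n\in F\}$ contains the $d$-ball of radius $\min_{n\in F}\alpha_n r_n$. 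This mutual inclusion between bases yields equality of the two topologies, completing the proof.
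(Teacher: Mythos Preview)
Your proof is correct and, in its final paragraph comparing bases, matches the paper's argument essentially line for line (the paper uses exactly the same choices: $\beta=\min_{n\in F}\alpha_n r_n$ for one inclusion, and a finite $F$ with tail sum $<r/2$ together with radii $r/(2\alpha_n|F|)$ for the other). Your additional sequential-convergence argument is a valid alternative since both topologies are first-countable, but the paper bypasses it and works directly with bases.
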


\begin{proof}
 The defining properties of a metric for $d$ follow easily from the corresponding properties for every $d_n$.
 
 We denote by $\mathbf{y}$ an element  of the product space and $\mathbf{y}(i)$ is called the $i$th coordinate. 
 Recall that the product topology of $\prod_{n \in J} Y_n$ is the smallest topology containing the basic open sets $<E_n>_{n \in S}:=\{\mathbf{y} \in \prod_{n \in J} Y_n\colon \mathbf{y}(i) \in E_i, \, \forall i \in S\}$, where $S \subseteq J$ is finite and $E_i$ is an open subset of $Y_i$ (for every $i \in S$).

 Suppose that $A \subseteq \prod_{n \in J} Y_n$ is an open set and $\mathbf{y} \in A$. Then, by definition of product topology, there exist a finite $S \subseteq J$ and a collection of open sets $\{E_i\}_{i \in S}$ such that 
 $\mathbf{y} \in <E_i>_{i \in S} \subseteq A$. Since $\mathbf{y}(i) \in E_i$ and $E_i$ is open, then for every $i \in S$, there exists $r_i>0$ such that
 $\mathbf{y}(i) \in B_n(\mathbf{y}(i) , r_i) \subseteq E_i$. Define $\beta:=\min\{\alpha_i r_i\colon i \in S\}$; it is easy to show that
 $B(\mathbf{y}, \beta) \subseteq <E_i>_{i \in S}$. Indeed, if
 $\mathbf{z} \in B(\mathbf{y}, \beta)$, then $d(\mathbf{y}, \mathbf{z}) \le \beta$ which implies $d_i(\mathbf{y}(i), \mathbf{z}(i)) \le \alpha^{-1} \beta \le r_i$ for all $i \in S$. Whence, $\mathbf{z} \in <E_i>_{i \in S}$.
 
 Conversely, consider $B(\mathbf{y}, r)$.  We show that there exist a finite $S \subseteq J$ and a collection of open sets $\{E_i\}_{i \in S}$ such that 
 $\mathbf{y} \in <E_i>_{i \in S}  \subseteq B(\mathbf{y}, r)$. Since 
 $\sum_{n \in J} \alpha_n M_n <+\infty$, there exists a finite $S \subseteq J$ such that
 $\sum_{n \in J\setminus S} \alpha_n M_n < r/2$. Define $r_n:=r/(2 \alpha_n \# S)$ for every $n \in S$ where $\# S<+\infty$ is the cardinality of $S$. If $\mathbf{z}$ is such that 
 $d_n(\mathbf{y}(n), \mathbf{z}(n)) \le  r_n$ for all $n \in S$. Then
 \[
 \begin{split}
  d(\mathbf{y}, \mathbf{z}) &=
  \sum_{n \in S} d_n(\mathbf{y}(n), \mathbf{z}(n)) \alpha_n +
  \sum_{n \in J \setminus S} d_n(\mathbf{y}(n), \mathbf{z}(n)) \alpha_n\\
  & \le \sum_{n \in S} r_n \alpha_n +
  \sum_{n \in J \setminus S} M_n \alpha_n < r/2+r/2=r
  \end{split}
 \]
Whence, if $E_n:=B_n(\mathbf{y}(n),r_n)$ for all $n \in S$, then
$\mathbf{y} \in <E_i>_{i \in S}  \subseteq B(\mathbf{y}, r)$.
 
\end{proof}

The following lemma is elementary but we include it for the sake of completeness. It generalizes to metric spaces a well-known result on \emph{total convergence} in normed space.

\begin{lem}\label{lem:totalconvergence}
 Let $(Y,d)$ be a metric space. The space is complete if and only if every sequence $\{y_i\}_{i \in \mathbb{N}}$ such that $\sum_{i \in \mathbb{N}} d(y_i,y_{i+1})<+\infty$ converges.
\end{lem}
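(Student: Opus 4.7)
The argument is the classical one: one direction is a direct Cauchy estimate, the other extracts a rapidly convergent subsequence. Neither direction has a real obstacle; the only care needed is to verify that in a generic metric space (without a linear structure) the usual ``total convergence'' reasoning still goes through using only the triangle inequality.

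For the forward direction, I assume $(Y,d)$ is complete and take $\{y_i\}_{i\in\mathbb N}$ with $S:=\sum_{i\in\mathbb N} d(y_i,y_{i+1})<+\infty$. The tail $S_n:=\sum_{i\ge n} d(y_i,y_{i+1})$ tends to $0$ as $n\to+\infty$. For $m>n$ the triangle inequality gives
\[
d(y_n,y_m)\le \sum_{i=n}^{m-1} d(y_i,y_{i+1})\le S_n,
\]
so $\{y_i\}$ is Cauchy, and by completeness it converges.

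For the converse, I assume that every sequence with summable consecutive distances converges, and I take an arbitrary Cauchy sequence $\{x_n\}_{n\in\mathbb N}$ in $Y$. Using the Cauchy property, I extract inductively a subsequence $\{x_{n_k}\}_{k\in\mathbb N}$ such that $d(x_{n_k},x_{n_{k+1}})<2^{-k}$ for all $k$; this is possible since for each $k$ there exists $N_k$ with $d(x_p,x_q)<2^{-k}$ whenever $p,q\ge N_k$, so one chooses $n_{k+1}>\max(n_k,N_k)$. Then $\sum_k d(x_{n_k},x_{n_{k+1}})<\sum_k 2^{-k}<+\infty$, so by hypothesis $x_{n_k}\to y$ for some $y\in Y$.

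It remains to upgrade convergence of this subsequence to convergence of the full sequence: given $\varepsilon>0$, pick $K$ with $d(x_{n_k},y)<\varepsilon/2$ for $k\ge K$ and $N$ so that $d(x_p,x_q)<\varepsilon/2$ for $p,q\ge N$; then for $n\ge \max(n_K,N)$, choosing $k\ge K$ with $n_k\ge N$, we get $d(x_n,y)\le d(x_n,x_{n_k})+d(x_{n_k},y)<\varepsilon$. Hence $x_n\to y$, establishing completeness. The only mildly delicate step is the subsequence extraction, which is routine.
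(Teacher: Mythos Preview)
Your proof is correct and follows essentially the same approach as the paper: both directions use the triangle inequality for the forward implication, and for the converse extract a subsequence with $d(x_{n_k},x_{n_{k+1}})<2^{-k}$ (the paper uses $2^{-(i+1)}$) and then upgrade subsequence convergence to full convergence via the Cauchy property. The arguments are the standard ones and differ only in cosmetic details of the subsequence extraction.
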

\begin{proof}
 Suppose that $(Y,d)$ is complete. By using the triangle inequality, $d(y_n,y_m) \le \sum_{i=n}^{m-1} d(y_i,y_{i+1})$ for all $n <m$, if
$\sum_{i \in \mathbb{N}} d(y_i,y_{i+1})<+\infty$, then $\{y_i\}_{i \in \mathbb{N}}$ is a Cauchy sequence, whence it is convergent.
 
 Conversely, suppose that every sequence $\{y_i\}_{i \in \mathbb{N}}$ such that $\sum_{i \in \mathbb{N}} d(y_i,y_{i+1})<+\infty$ converges. Let $\{y_i\}_{i \in \mathbb{N}}$ be a Cauchy sequence. Define $n_i:=\min\{n \in \N \colon d(y_j,y_m) \le 1/2^{i+1}, \, \forall j,m \ge n\}$. By construction
 $\sum_{i \in \mathbb{N}} d(y_{n_i},y_{n_{i+1}}) \le \sum_{i \in \mathbb{N}} 1/2^{i+1} =1<+\infty$, whence the subsequence $\{y_{n_i}\}_{i \in \N}$ converges to some $z \in Y$. Let $\varepsilon >0$ and ${i_\varepsilon}$ such that $1/2^{{i_\varepsilon}} \le \varepsilon$. By continuity, $d(y_{n_{{i_\varepsilon}}}, z)\le 1/2^{{i_\varepsilon}+1}$ and $d( y_{n_{{i_\varepsilon}}}, y_n)
 \le 1/2^{{i_\varepsilon}+1}$ for every $n \ge n_{{i_\varepsilon}}$. Thus,
for all $n \ge n_{{i_\varepsilon}}$, 
$d(y_n,z) \le d(y_{n_{{i_\varepsilon}}}, z)+d( y_{n_{{i_\varepsilon}}}, y_n) \le 1/2^{{i_\varepsilon}} \le \varepsilon$ and
this proves that the space is complete.
 
\end{proof}

\begin{rem}\label{rem:productspace}
 It is known, see for instance \cite{cf:Hewitt}, that if every $Y_i$ is separable and the cardinality of $J$ is at most $2^{\aleph_0}$,
 then $\prod_{n \in J} Y_n$ is separable. The converse is  trivial.
 
 Moreover, by using Lemma~\ref{lem:totalconvergence} it is easy to show that 
  every finite metric space $(Y_i, d_i)$ is complete if and only if $\prod_{n \in J} Y_n$ is complete with the distance~\eqref{eq:productdistance}. Indeed, suppose that every finite metric space $(Y_i, d_i)$ is complete. Since $d(\mathbf{y}, \mathbf{z})/\alpha_i \ge d_i(\mathbf{y}(i),\mathbf{z}(i))$ for every $i \in J$, if
  $\sum_{n \in \N} d(\mathbf{y}_n, \mathbf{y}_{n+1})<+\infty,$ then
  $\sum_{n \in \N} d_i(\mathbf{y}_n(i), \mathbf{y}_{n+1}(i))<+\infty$ for every $i \in J$; thus
  $d_i(\mathbf{y}_n(i),z(i))\to 0$ as $n \to +\infty$ for some $z(i) \in Y_i$. Since the topology generated by $d$ is the pointwise convergence topology (or by direct computation by using the Bounded Convergence Theorem) we have $d(\mathbf{y}_n, \mathbf{z}) \to 0$ as $n \to +\infty$ where
  $\mathbf{z}(i):=z(i)$ for all $i \in J$; whence $(\prod_{n \in J} Y_n,d)$ is complete. 
  Conversely suppose that $(\prod_{n \in J} Y_n,d)$ is complete and fix $j \in J$. Fix also $\mathbf{z} \in  \prod_{n \in J} Y_n$ and suppose that $\sum_{n \in \mathbb{N}} d_j(y_i,y_{i+1}) <+\infty$ where
  $\{y_i\}_{i \in \N}$ is a sequence in $Y_j$. For every fixed $i \in \N$, define $\mathbf{y}_i$ as $\mathbf{y}_i(n):=\mathbf{z}(n)$ for all $n \neq j$  and $\mathbf{y}_i(j):=y_i$. Then $\sum_{i\in \N}
  d(\mathbf{y}_i,\mathbf{y}_{i+1})=\alpha_j \sum_{i \in \N} d_j(y_i, y_{i+1})<+\infty$ whence 
  $d(\mathbf{y}_i,\mathbf{w})\to 0$ as $i \to +\infty$ for some $\mathbf{w} \in \prod_{n \in J} Y_n$ which implies $d_j(y_i, \mathbf{w}(j))=d_j(\mathbf{y}_i(j),\mathbf{w}(j)) \to 0$ as $i \to +\infty$. This proves that $(Y_j, d_j)$ is complete.
 
 Thus, every finite metric space $(Y_i,d_i)$ is Polish if and only if $\prod_{n \in J} Y_n$ is a Polish metric space with the distance defined by equation~\eqref{eq:productdistance}. This applies for instance
 to
$\mathbb{R}^X$ endowed with the distance
 \[
 d(\mathbf{z},\mathbf{v} ):=
 \sum_{n \in J} \frac{\min(|\mathbf{z}(n)-\mathbf{v}(n)|,1)}{2^n}  
 \]
 where $\{x_i \colon i \in J\}$ is a (finite or infinite) enumeration of $X$ and $J:=\{1, \ldots, \# X\}$.
 Whence $\mathbb{R}^X$
 is a Polish metric space and the metric $d$ generates the pointwise convergence topology.
%
%

Since $[0,1]^X$ and $\mathbb{N}^X$ are closed subsets of $\mathbb{R}^X$, they are Polish metric spaces as well. In particular every measure $\mu_x$, supported on $S_X \subseteq \mathbb{N}^X$, can be seen as a measure defined on 
$\mathbb{N}^X$ or $\mathbb{R}^X$.
 	We note that $\mathbb{R}^X$ is a partially ordered Polish metric space, meaning that the set $\{(\mathbf{z}, \mathbf{v}) \in\mathbb{R}^X \times \mathbb{R}^X \colon \mathbf{z} \le \mathbf{v}\}$ is a closed subset of $\mathbb{R}^X \times \mathbb{R}^X$. 

 \end{rem}
 %

 		\section*{Acknowledgements}
 The authors are grateful to Elisabetta Candellero for useful discussions. They acknowledge support by INdAM--GNAMPA.


 \end{document}